\newcommand{\E}{\mathbb{E}}
\newcommand{\N}{\mathbb{N}}
\renewcommand{\P}{\mathbb{P}}
\newcommand{\Q}{\mathbb{Q}}
\newcommand{\R}{\mathbb{R}}
\newcommand{\ZZ}{\mathbb{Z}}
\newcommand{\Ac}{\mathcal{A}}
\newcommand{\Bc}{\mathcal{B}}
\newcommand{\Fc}{\mathcal{F}}
\newcommand{\Kc}{\mathcal{K}}
\newcommand{\Lc}{\mathcal{L}}
\newcommand{\Mc}{\mathcal{M}}
\newcommand{\Md}{\mathrm{M}}
\newcommand{\ie}{\textit{i.e.}}
\newcommand{\eg}{\textit{e.g.}}
\def \Om{\Omega}
\def \om{\omega}
\def \omb{\bar{\omega}}
\def \Md{\mathrm{M}}
\newcommand{\x}{\times}
\newcommand{\ox}{\otimes}
\newcommand{\as}{\mathrm{a.s.}}
\newcommand{\X}[1]{X^{#1}}
\newcommand{\e}[1]{\mathrm{e}^{#1}}
\newcommand{\bmax}{\bar{b}}
\newcommand{\dmax}{\bar{d}}
\newcommand{\dmin}{\underline{d}}
\newcommand{\TV}[1]{\lVert{#1}\rVert_{TV}}
\newtheorem{theorem}{Theorem}
\newtheorem{lemma}[theorem]{Lemma}
\newtheorem{proposition}[theorem]{Proposition}
\newtheorem{corollary}[theorem]{Corollary}
\newtheorem{hypothesis}{Hypothesis}
\theoremstyle{remark} \newtheorem*{remark}{Remark}
\newcommand{\rmi}{{\rm (i)$\>\>$}}
\newcommand{\rmii}{{\rm (ii)$\>\>$}}
\newcommand{\rmiii}{{\rm (iii)$\>\>$}}
\newcommand{\rmiv}{{\rm (iv)$\>\>$}}
\def\restriction#1#2{\mathchoice
              {\setbox1\hbox{${\displaystyle #1}_{\scriptstyle #2}$}
              \restrictionaux{#1}{#2}}
              {\setbox1\hbox{${\textstyle #1}_{\scriptstyle #2}$}
              \restrictionaux{#1}{#2}}
              {\setbox1\hbox{${\scriptstyle #1}_{\scriptscriptstyle #2}$}
              \restrictionaux{#1}{#2}}
              {\setbox1\hbox{${\scriptscriptstyle #1}_{\scriptscriptstyle #2}$}
              \restrictionaux{#1}{#2}}}
\def\restrictionaux#1#2{{#1\,\smash{\vrule height .8\ht1 depth .85\dp1}}_{\,#2}}
\title{On the link between infinite horizon control and quasi-stationary distributions}
\author{Nicolas Champagnat\thanks{Universit\'e de Lorraine, Institut Elie Cartan de Lorraine,
    UMR 7502, Vand\oe uvre-l\`es-Nancy, F-54506, France;
    Nicolas.Champagnat@inria.fr}~\thanks{CNRS, Institut Elie Cartan de Lorraine, UMR 7502,
    Vand\oe uvre-l\`es-Nancy, F-54506, France}~\thanks{Inria, TOSCA, Villers-l\`es-Nancy,
    F-54600, France}~, Julien Claisse\thanks{Ecole Polytechnique, CMAP, Palaiseau, F-91128, France; claisse@cmap.polytechnique.fr}}
\begin{document}

\maketitle

\begin{abstract}
    We study infinite horizon control of continuous-time non-linear branching processes with almost sure extinction for
    general (positive or negative) discount. Our main goal is to study the link between infinite horizon control of these processes and
    an optimization problem involving their quasi-stationary distributions and the corresponding extinction rates. More precisely, we
    obtain an equivalent of the value function when the discount parameter is close to the threshold where the value function becomes
    infinite, and we characterize the optimal Markov control in this limit. To achieve this, we present a new proof of the dynamic
    programming principle based upon a pseudo-Markov property for controlled jump processes. We also prove the convergence to a
    unique quasi-stationary distribution of non-linear branching processes controlled by a Markov control conditioned on
    non-extinction.
\end{abstract}

 \vspace{1mm}

 {\bf Keywords.} Optimal stochastic control, infinite horizon control, branching process, quasi-stationary distribution, dynamic programming.

 \vspace{2mm}

 \textbf{MSC 2010.} Primary 93E20, 60J27, 60J80, 60F99, 49L20; secondary 90C40, 60J85, 60B10, 60G10.

\section{Introduction}
\label{sec:introduction}

Given a family of controlled stochastic processes $(X^{x,\alpha}_t,t\geq 0)$ in some measurable state space $(S,\mathcal{S})$, adapted
to a filtration $(\mathcal{F}_t)_{t\geq 0}$, where $x\in S$ is the initial value of $X^{x,\alpha}$, $\alpha=(\alpha_s,s\geq 0)$ is a
$(\mathcal{F}_t)_{t\geq 0}$-predictable process belonging to a given set $\mathcal{A}$ of admissible controls, the classical infinite horizon
control problem is formulated as follows: given $\beta<0$ and a bounded measurable function $f$, one defines the value function as
\begin{equation}
  \label{eq:general-ergodic-control}
  v_\beta(x)=\sup_{\alpha\in\mathcal{A}}\E\left[\int_0^\infty e^{\beta t}
    f(X^{x,\alpha}_t,\alpha_t)\,dt\right].
\end{equation}
The infinite horizon control problem consists in characterizing the optimal control $\alpha^*$ realizing the above supremum (when it exists).

In the case where $(X^{x,\alpha}_t,t\geq 0)$ is an ergodic Markov process for all Markov control $\alpha$, i.e.\ all control of the
form $\alpha_s=a(X^{x,\alpha}_{s-})$ for some measurable function $a$, the infinite horizon control problem when $\beta\uparrow 0$ can be
formulated as an optimization problem on (ergodic) invariant distributions. More precisely, If we call $\mathcal{A}_M$ the set of
Markov controls, using the ergodic theorem,
one expects that, when $\beta\uparrow 0$,
\begin{equation}
  \label{eq:heuristics}
  v_\beta(x)\sim \frac{1}{\beta}\sup_{\alpha\in\mathcal{A}_M}\mu^\alpha(f^\alpha),
\end{equation}
where, for all $\alpha\in\mathcal{A}_M$, $f^\alpha:x\mapsto f(x,a(x))$ and $\mu^\alpha$ is the invariant distribution of
$X^{x,\alpha}$ for all $x\in S$. 
 For further details, we refer the reader to, e.g., \cite{puterman-94} for discrete-time Markov
 chains, \cite{guo-hernandez-09} for the continuous-time case and \cite{arapostathis-borkar-ghosh-12} for diffusion processes.

If one assumes that for all $\alpha\in\mathcal{A}$ and all $x\in S$, the process $X^{x,\alpha}$ is a.s.\ absorbed after a finite time
at some point $\partial$, and if one assumes $f(\partial)=0$ and $f(x)>0$ for all $x\in S\setminus\{\partial\}$, then one expects
that the ergodic control problem~\eqref{eq:general-ergodic-control} favors controls $\alpha$ for which the absorption time of
$X^{x,\alpha}$ is longer. The relation~\eqref{eq:heuristics} clearly does not apply in this case since $\mu_\alpha=\delta_\partial$ and
$f(\partial)=0$. This suggests that this new problem needs to be studied using quasi-stationary distributions (QSD for short) instead
of the stationary distribution $\delta_\partial$.

For a Markov process $(Y_t,t\geq 0)$ taking values in $S\cup\{\partial\}$ a.s.\ absorbed in finite time at $\partial$, a
probability measure $\pi$ on $S$ is said to be quasi-stationary if
$$
\P_\pi(Y_t\in \cdot\mid t<\tau)=\pi,\qquad\forall\,t\geq 0,
$$
where $\tau:=\inf\{t\geq 0:Y_t=\partial\}$ is the absorption time of $Y$ and $\P_\pi$ is the law of $Y$ given that $Y_0\sim\pi$. 
It is well-known ~\cite{meleard-villemonais-12} that, if $\pi$ is a QSD, then the absorption time starting from $\pi$ is
exponentially distributed with some parameter $\lambda>0$ called the absorption rate of the QSD $\pi$:
$$
\P_\pi(t<\tau)=e^{-\lambda t},\quad\forall\,t\geq 0.
$$
Recently, new criteria to prove the existence, uniqueness and exponential convergence of conditional distribution
of general absorbed Markov processes were obtained in~\cite{champagnat-villemonais-15}.

The goal of this article is to make the link between infinite horizon control of absorbed processes and quasi-stationary
distributions, by proving an extension of~\eqref{eq:heuristics} in the case where $f(\partial)=0$.

We will restrict in this work to continuous-time controlled processes in $\mathbb{Z}_+:=\{0,1,2,\ldots\}$ which are \emph{non-linear
  branching processes}. We call them \emph{branching processes} since they allow simultaneous births from a single individual, as in
the continuous-time counterpart of Galton-Watson processes, and deaths only occur singly. In addition, the process is absorbed at the
state $\partial=0$, when the population goes extinct. However, we consider more general processes than processes satisfying the
branching property (processes with linear birth and death rates), which explains the term \emph{non-linear}. In such a general
settings, very few results on QSD are known, except for processes satisfying the branching property~\cite{athreya-ney-72},
 and for state-dependent branching processes in discrete time~\cite{gosselin-01}, but the last reference does not
  obtain uniform convergence of conditional distributions, which is needed for our analysis. A particular case is given by classical
birth and death processes, for which quasi-stationary properties have been studied for a long
time~\cite{cavender-78,vandoorn-91,ferrari-martinez-al-92}. More recently, the topic of QSD for birth and death processes has been
further studied in~\cite{Martinez-Martin-Villemonais2012,champagnat-villemonais-15,champagnat-villemonais-PNM-15}
and~\cite{chazottes-al-15}. The first three of the last references study the exponential convergence in total variation of
conditional distributions to a unique QSD, uniformly w.r.t.\ the initial distribution. The new results we obtain here on QSD in the
setting of non-linear branching processes are exactly of this type.

Non-linear branching processes can be controlled in the sense that the birth and death rates and the progeny distribution may depend
on a control parameter. Note that a controlled process may not satisfy the branching property even if each process with a constant
control satisfies it. Hence, for the sake of simplicity, we will call our processes \emph{controlled branching processes} and omit
the \emph{non-linear} which is implicit.

The optimal control of stochastic or deterministic population dynamics like non-linear branching processes 
  is an important topic in various biological domains. We can cite for example conservation biology, where the controller tries to favor
the survival of an endangered species~\cite{houston-namara-99,houston-namara-01} 
, or medecine and agronomy, where the controller wants to favor the
extinction of a population of pathogens in some disease or of pests in an agricultural process~\cite{possingham-shea-00}.
Other examples of control problems in fishery, agronomy, bio-reactors or tumor growth may be found e.g.\
in~\cite{lenhart-workman-07,anita-al-11}.
  
Section~\ref{sec:model} is devoted to the definition of the controlled non-linear branching processes and to the proof of preliminary
properties. Using the criteria of~\cite{champagnat-villemonais-15}, we also state in Section~\ref{sec:model} and prove in
Section~\ref{sec:QSD} that for all Markov control $\alpha$, the controlled branching process $X^{x,\alpha}$ admits a unique
quasi-stationary distribution $\pi^\alpha$ with absorption rate $\lambda^\alpha>0$, and that the conditional distributions converge
exponentially and uniformly in total variation to the QSD. We extend in Section~\ref{sec:main-result} the problem of infinite horizon
control~\eqref{eq:general-ergodic-control} to positive values of $\beta$, and we also state our main results on
  infinite horizon control, among which the fact that, if $f\geq 0$ and
$f(0,\cdot)\equiv 0$, then, when $\beta\rightarrow\lambda_*:=\inf_{\alpha\in\mathcal{A}_M}\lambda^\alpha$,
$$
v_\beta(x)\sim\frac{1}{\lambda_*-\beta}\sup_{\alpha\in\mathcal{A}_M\text{ s.t.\ }\lambda^\alpha=\lambda_*}\pi^\alpha(f^{\alpha})\eta^{\alpha}(x),
$$
where $\eta^{\alpha}$ is the eigenvector of the generator of $\X{\alpha}$ corresponding to the eigenvalue $\lambda^{\alpha}$.
This result deals with control problems favoring survival of the population. We also state similar results on
  control problems in favor of extinction. In Section~\ref{sec:ergodic-control}, we prove that the value function is solution to the
Hamilton-Jacobi-Bellman equation, and that the optimal control belongs to the class of Markov controls. These results are to a large
extent classical, but the proofs we give are original. Our analysis relies on the analogy with the theory of controlled diffusions
using a Poisson random measure playing the role of the Brownian motion. Section~\ref{sec:QSD} is devoted to proofs of our results on
QSD. Finally, we give the proofs of the main results of Section~\ref{sec:main-result} in Section~\ref{sec:proof}.

\section{Controlled continuous-time non-linear branching processes}
  \label{sec:model}

\subsection{Definition and first properties}

Let $A$ be the control space that is assumed to be Polish. 
Consider a population which evolves like a controlled non-linear branching process. More precisely, if $n\in\ZZ_+:=\{0,1,2,\ldots\}$
is the size of the population, then the (global) death rate is given by $d_n:A\to\R_+$, the (global) birth rate by $b_n:A\to\R_+$ and
the probability to have $k\in\N:=\{1,2,\ldots\}$ offsprings by $p_{n,k}:A\to[0,1]$, where
\begin{equation*}
  \sum_{k=1}^{+\infty} {p_{n,k} (a)} = 1,\quad \forall\,n\in\ZZ_+,\, a\in A.
\end{equation*}
Moreover, we suppose that
\begin{gather*}
  b_{0}(a)=d_{0}(a)=0, \quad \forall\, a\in A,
\end{gather*}
so that $\partial=0$ is an absorbing state of the population process.

Let $Q_1$ and $Q_2$ be two independent Poisson random measures on $\R_+\x\R_+$ with Lebesgue intensity measures. Denote by $(\Fc_s)_{s\geq 0}$ the filtration generated by $Q_1$ and $Q_2$, that is,
\begin{equation*}
 \Fc_s := \sigma\left(Q_1(B),Q_2(B);\ B\in \Bc([0,s])\ox\Bc(\R_+)\right).
\end{equation*}
Let $\Ac$ be the collection of $(\mathcal{F}_s)_{s\geq 0}$-predictable processes valued in $A$. An element of $\Ac$ is called an
admissible control.

We want to characterize the population $\X{x,\alpha}$ controlled by $\alpha\in\Ac$
starting from $x\in\ZZ_+$ as the solution of the following SDE:
\begin{multline}\label{eq:eds}
  \X{x,\alpha}_s = x + \int_{(0,s]\x\R_+}{\left( \sum_{k\geq 1} { k \mathds{1}_{I_k\left(\X{x,\alpha}_{\theta-},\alpha_{\theta}\right)}(z) } \right) Q_1(d\theta,d z)} \\
  - \int_{(0,s]\x\R_+} {\mathds{1}_{\left[0,d\left(\X{x,\alpha}_{\theta-},\alpha_{\theta}\right)\right)}(z) \,Q_2(d\theta,d z)},
  \quad \forall\,s\geq 0, \P-\as,
\end{multline}
where $\X{x,\alpha}_{\theta-}$ denotes the left-hand limit of $X^{x,\alpha}$ at time $\theta$ and
\begin{equation*}
  I_{n,k}(a) := \left[b_n(a) \sum_{l=1}^{k-1} {p_{n,l}(a)}\, ,\ b_n(a) \sum_{l=1}^{k} {p_{n,l}(a)}\right).
\end{equation*}
The proposition below ensures that, under suitable conditions, the process $\X{x,\alpha}$ is well-defined.

\begin{hypothesis}\label{hyp:population} Assume that:\\
  \rmi $b_n$, $d_n$ and $p_{n,k}$ are measurable; \\
  \rmii there exists $\bmax>0$ such that
  \begin{equation*}
    b_n(a)\leq \bmax n,\quad \forall\,n\in\ZZ_+,\, a\in A;
  \end{equation*}
  \rmiii there exists $(\dmax_n)_{n\in\ZZ_+}\in(\R_+)^{\ZZ_+}$ such that
  \begin{equation*}  
    d_n(a)\leq \dmax_n,\quad \forall\,n\in\ZZ_+,\, a\in A;
  \end{equation*}
  \rmiv there exists $M>0$ such that 
  \begin{equation*}
    \sum_{k=1}^{\infty} {k p_{n,k}(a)} \leq M,\quad \forall\,n\in\ZZ_+,\, a\in A.
  \end{equation*}
\end{hypothesis}

Assumption~(ii) means that the birth rate per individual is bounded by $\bmax$, regardless of the population size and the control.
Assumption~(iv) means that the number of offspring at each birth event has bounded first moment, regardless of the population size and
the control.

\begin{proposition}\label{prop:welldefined}
 Under Hypothesis~\ref{hyp:population}, there exists a unique (up to indistinguishability) c\`adl\`ag and adapted process solution of~\eqref{eq:eds}. In addition, it satisfies
 \begin{equation}\label{eq:moment}
  \E\left[ \sup_{0\leq\theta\leq s} {\left\{\X{x,\alpha}_\theta\right\}} \right] \leq x \e{\bmax M s}.
 \end{equation}
\end{proposition}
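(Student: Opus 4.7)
The plan is to construct the solution inductively between successive jump times, and then establish non-explosion together with the moment bound via a Gronwall-type argument applied only to the positive (birth) contribution of the SDE.

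First I would construct a candidate solution jump-by-jump. Set $\tau_0:=0$ and $\X{x,\alpha}_0:=x$; given the process built up to the $k$-th jump time $\tau_k$ with $\X{x,\alpha}_{\tau_k}=n$, the instantaneous rate of leaving state $n$ is bounded by $\bmax n+\dmax_n<\infty$, so the first point of $Q_1$ or $Q_2$ falling in the relevant horizontal strip of finite vertical extent occurs at a strictly positive time $\tau_{k+1}$. The predictability of $\alpha$ together with Hypothesis~\ref{hyp:population}(i) ensures that all these stopping times and the induced transitions are well-defined and $(\Fc_s)$-adapted. Pathwise uniqueness on $[0,\tau_\infty)$, with $\tau_\infty:=\lim_k \tau_k$, is then automatic: any two c\`adl\`ag adapted solutions must share the same sequence of jump times and jump sizes, which are entirely determined by $Q_1$, $Q_2$ and $\alpha$.

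The second step is the moment estimate, which simultaneously rules out explosion. I would introduce the localizing stopping times $\sigma_K:=\inf\{s\geq 0:\X{x,\alpha}_s\geq K\}$. The key observation is that the death integral in~\eqref{eq:eds} is always nonnegative, so only birth events increase $\X{x,\alpha}$; hence
\begin{equation*}
  \sup_{0\leq \theta\leq s\wedge\sigma_K}\X{x,\alpha}_\theta \leq x + \int_{(0,s\wedge\sigma_K]\x\R_+} \sum_{k\geq 1} k\,\mathds{1}_{I_{\X{x,\alpha}_{\theta-},k}(\alpha_\theta)}(z)\, Q_1(d\theta,dz).
\end{equation*}
Taking expectation, applying the compensation formula with intensity $d\theta\,dz$, and bounding the integrand by $\bmax M\,\X{x,\alpha}_\theta$ thanks to Hypothesis~\ref{hyp:population}(ii) and~(iv), I obtain the Gronwall inequality $\E[\sup_{\theta\leq s\wedge\sigma_K}\X{x,\alpha}_\theta]\leq x + \bmax M\int_0^s \E[\X{x,\alpha}_{\theta\wedge\sigma_K}]d\theta$, whose resolution yields $\E[\sup_{\theta\leq s\wedge \sigma_K}\X{x,\alpha}_\theta]\leq x\,\e{\bmax M s}$. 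Since this bound is uniform in $K$, Markov's inequality gives $\P(\sigma_K\leq s)\leq x\e{\bmax M s}/K$, forcing $\tau_\infty=\infty$ a.s., and monotone convergence then yields~\eqref{eq:moment}.

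The main obstacle, and the reason a naive compensation of the full SDE would fail, is that the death rate $\dmax_n$ is only assumed to be finite for each $n$ and is not uniformly bounded in $n$; a priori the process could accumulate infinitely many transitions in finite time. The trick is precisely to discard the death term when bounding the supremum (deaths are favorable for this purpose) and to use the linear birth bound $\bmax n$ combined with the first-moment bound $M$ on offspring to close Gronwall's argument, so that (ii) and (iv) are exactly the minimal ingredients needed.
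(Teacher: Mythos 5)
Your proposal is correct and takes essentially the same approach as the paper: pathwise construction between jump times, discarding the death integral to get a one-sided bound by the birth term, compensation plus Gronwall with a localizing sequence to obtain the moment bound uniformly, and then deducing non-explosion and passing to the limit (the paper uses Fatou where you use monotone convergence, which is immaterial here). The one phrase worth tightening is the Gronwall step: you should feed $\E[\sup_{\theta'\leq\theta\wedge\sigma_K}\X{x,\alpha}_{\theta'}]$ rather than $\E[\X{x,\alpha}_{\theta\wedge\sigma_K}]$ into the integral so that the same quantity appears on both sides, exactly as the paper does with its $X^*$ notation.
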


\begin{proof}
 Since the process $X^{x,\alpha}$ is piecewise constant, it can be constructed for each $\omega$ by 
 considering
 recursively the sequence of atoms of the Poisson measures for which the indicator functions in the integrals are non-zero. This
 proves the existence and uniqueness of the solution of~\eqref{eq:eds}, until the first accumulation point of the sequence of jump
 times. Therefore, in order to ensure existence and uniqueness of a global solution of~\eqref{eq:eds}, one needs to prove that this
 first accumulation point is infinite. 
 Since Hypothesis~\ref{hyp:population} ensures that the jump rates are bounded as long as the size of the population is, 
 it suffices to prove that $\tau_n\rightarrow+\infty$ as $n\rightarrow+\infty$, where
 \begin{equation*}
  \tau_n := \inf{\big\{s\geq 0;\ \X{x,\alpha}_s\geq n\big\}}.
 \end{equation*}
 We shall prove this and~\eqref{eq:moment} simultaneously. Denote $\X{*}_s := \sup_{0\leq\theta\leq s}{\{\X{x,\alpha}_{\theta}\}}$.
 One clearly has
 \begin{equation*}
   \X{*}_{s\wedge\tau_n} \leq x + \int_{(0,s\wedge\tau_n]\x\R_+}  {  \sum_{k\geq 1} { k
       \mathds{1}_{I_k\left(\X{x,\alpha}_{\theta-},\alpha_{\theta}\right)}\left(z\right) }\ Q_1\left(d \theta, d z\right)  }.
 \end{equation*}
 It yields
 \begin{align*}
   \E\left[\X{*}_{s\wedge\tau_n}\right]
   & \leq x + \E\left[\int_0^{s\wedge\tau_n}  { b\left(\X{x,\alpha}_{\theta},\alpha_{\theta}\right) \sum_{k\geq 1} { k
         p_{k}\left(\X{x,\alpha}_{\theta},\alpha_{\theta}\right) }\ d\theta } \right] \\
   & \leq x + \bmax M \E\left[\int_0^{s} { \X{*}_{\theta\wedge\tau_n}\ d\theta } \right].
 \end{align*}
 Using Gronwall's lemma, we obtain
 \begin{equation*}
  \E\left[ \X{*}_{s\wedge\tau_n}\right]\leq x\e{\bmax M s}.
 \end{equation*}
 Since the r.h.s. does not depend on $n$, we deduce that $\tau_n$ converges almost surely to infinity. By Fatou's lemma, we conclude
 that the inequality~\eqref{eq:moment} holds.
\end{proof}

The last result shows the existence and uniqueness of the solution of~\eqref{eq:eds} except on the event (of zero probability) where
there is accumulation of jump events in a bounded time interval. We shall call a time of accumulation of jumps an explosion time. For
questions of measurability, we need to define precisely the process $X^{x,\alpha}$ on this event. To this aim, we assume that this
process takes values in $\ZZ_+\cup\{\infty\}$, and we pose $X_s=\infty$ after the first explosion time. The process defined this way
is clearly adapted w.r.t.\ $(\mathcal{F}_s)_{s\geq 0}$. A more standard definition of $X^{x,\alpha}$, for example constant equal to
$0$ on the event of explosion, would require to complete the filtration to obtain an adapted process, but this would pose problems to
prove the pseudo-Markov property and the dynamic programming principle of Section~\ref{sec:ergodic-control}.

\subsection{Markov controls and quasi-stationary distributions}
In the sequel, we need to consider a subclass of the admissible controls, the so-called Markov controls. For any sequence $(\alpha_n)_{n\in\ZZ_+}$ valued in $A$, consider the following SDE: 
\begin{multline}\label{eq:edsm}
  X_s = x + \int_{(0,s]\x\R_+}{\left( \sum_{k\geq 1} { k \mathds{1}_{I^{\alpha}_k\left(X_{\theta-}\right)}(z) } \right) Q_1(d\theta,d z)} \\
  - \int_{(0,s]\x\R_+} {\mathds{1}_{\left[0,d^{\alpha}\left(X_{\theta-}\right)\right)}(z) \,Q_2(d\theta,d z)},
  \quad \forall\,s\geq 0, \P-\as,
\end{multline}
where $d^\alpha(n):=d_n(\alpha_n)$ and $I^\alpha_k(n):=I_{n,k}(\alpha_n)$ for all $k,n\geq 0$. By the arguments of
Proposition~\ref{prop:welldefined}, there is a unique (up to indistinguishability) c\`adl\`ag and adapted process solution to the SDE
above. In addition, this is the controlled process, solution to~\eqref{eq:eds} for the (admissible) control
$(s,\om)\mapsto\alpha(X_{s-}(\om))$, where we use the notation $\alpha(n)=\alpha_n$. As a consequence, by abuse of notation, $\alpha$
will refer in the sequel both to the sequence $(\alpha_n)_{n\in\ZZ_+}$ and to the corresponding admissible control. Since in this case
$\X{x,\alpha}$ is a Markov process, we call $\alpha$ a Markov control. Denote by $\Ac_M$ the collection of Markov controls.

 For all $\alpha\in\mathcal{A}_M$, the process $X^{x,\alpha}$ is Markov on $\ZZ_+$, with transition rates
 $$
 \begin{cases}
    b^\alpha(n) p^\alpha_{k}(n) & \text{from $n$ to $n+k$ for all $n\geq 1$ and $k\geq 1$,} \\
    d^\alpha(n) & \text{from $n$ to $n-1$ for all $n\geq 1$,}\\
    0 & \text{otherwise,}
 \end{cases}
 $$
 where $b^\alpha(n):=b_n(\alpha_n)$ and $p^\alpha_k(n):=p_{n,k}(\alpha_n)$. In other words, its infinitesimal generator is given by
 \begin{equation}\label{eq:operateur}
  \Lc^\alpha u_n := b^{\alpha}(n) \sum_{k=1}^{\infty} {\left(u_{n+k}-u_{n}\right)p^{\alpha}_{k}(n)} + d^{\alpha}(n)\left(u_{n-1} - u_n\right),
 \end{equation}
 for all bounded real-valued sequence $(u_n)_{n\in\ZZ_+}$.
  
 We give now a new set of assumptions that, together with Hypothesis~\ref{hyp:population}, imply the existence and uniqueness of a QSD
 for $X^{x,\alpha}$ for any Markov control $\alpha\in\mathcal{A}_M$.
 
 \begin{hypothesis}\label{hyp:qsd}
  Assume that: \\
  \rmi there exists $\epsilon>0$ and $\dmin>0$ such that
  \begin{equation*}
    d_n(a)\geq \dmin n^{1+\epsilon},\quad \forall\,n\in\ZZ_+,\, a\in A;
  \end{equation*}
  \rmii for all $y,z\in\N$,
  \begin{equation*}
    \inf_{\alpha\in\Ac_M} {\P_y\left(\X{\alpha}_1=z\right)}>0.
  \end{equation*}
 \end{hypothesis}
 
 Point~(i) implies that the absorption state 0 is accessible from any other state and any choice of the control. Further, together
 with the point (ii) of Hypothesis~\ref{hyp:population}, it ensures that the population goes extinct almost surely. Point~(ii) is an
 irreducibility property of the branching process, away from the absorbing point, uniformly w.r.t.\ the control. In particular, if we
 assume that for all $n\in\N$, there exists $k\in\N$ such that
 \begin{equation*}
   \inf_{a\in A} {p_{n,k}(a) b_n(a)}>0,
 \end{equation*}  
 this condition is satisfied.
 
 \begin{theorem}
   \label{thm:QSD-short-version}
   Under Hypotheses~\ref{hyp:population} and \ref{hyp:qsd}, for all $\alpha\in\mathcal{A}_M$, there exists a unique QSD $\pi^\alpha$
   on $\N$ for the process $X^\alpha$, and there exist constants $C,\gamma>0$ such that, for
   all $\alpha\in\mathcal{A}_M$ and all $x\in\N$,
   \begin{equation*}
     \left\|\P(X^{x,\alpha}\in\cdot\mid t<\tau^{x,\alpha})-\pi^\alpha\right\|_{TV}\leq Ce^{-\gamma t},\quad\forall t\geq 0,
   \end{equation*}
   where $\|\cdot\|_{TV}$ is the total variation norm and $\tau^{x,\alpha}$ is the extinction time of $\X{x,\alpha}$, that is,
   \begin{align*}
     \tau^{x,\alpha} := \inf\left\{s\geq 0:\ \X{x,\alpha}_s = 0\right\}.
   \end{align*}
 \end{theorem}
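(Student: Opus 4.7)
My plan is to apply the general criterion of Champagnat--Villemonais~\cite{champagnat-villemonais-15}, which provides a quantitative sufficient condition for exponential convergence of conditional distributions in total variation, uniformly in the initial distribution, to a unique QSD. Concretely, for each fixed $\alpha\in\Ac_M$, that criterion requires the existence of a probability measure $\nu$ on $\N$, a time $t_0>0$, and constants $c_1,c_2>0$ such that
\begin{gather*}
  \P_x\!\left(\X{\alpha}_{t_0}\in \cdot \mid t_0<\tau^{x,\alpha}\right)\geq c_1\,\nu(\cdot),\qquad \forall x\in\N, \\
  \P_\nu\!\left(t<\tau^\alpha\right)\geq c_2\,\sup_{x\in\N}\P_x\!\left(t<\tau^{x,\alpha}\right),\qquad \forall t\geq 0.
\end{gather*}
The constants $C,\gamma$ in the theorem are then determined by $t_0,c_1,c_2$, so the entire strategy is to produce $t_0,\nu,c_1,c_2$ \emph{uniformly in} $\alpha\in\Ac_M$.

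The first ingredient is a Lyapunov-type confinement estimate. Hypothesis~\ref{hyp:qsd}(i) gives $d_n(a)\geq\dmin n^{1+\epsilon}$, while Hypothesis~\ref{hyp:population}(ii)--(iv) gives $b_n(a)\sum_k k\,p_{n,k}(a)\leq \bmax M n$. I would look for a function $\varphi:\ZZ_+\to[1,\infty)$ with $\varphi(n)\to\infty$ (e.g.\ $\varphi(n)=n$ or a higher power) such that $\Lc^\alpha\varphi(n)\leq K-\lambda\,\varphi(n)$ uniformly in $\alpha$, for some $K,\lambda>0$; this is where the super-linear death rate is crucial. A standard application to $\e{\lambda t}\varphi(\X{\alpha}_t)$ and Markov's inequality then yields the existence of $N\in\N$ and $\epsilon_0>0$ (independent of $x$ and $\alpha$) with
\begin{equation*}
  \P_x\!\left(\X{\alpha}_{t_0/2}\in\{1,\ldots,N\},\ t_0/2<\tau^{x,\alpha}\right)\geq \epsilon_0\,\P_x\!\left(t_0/2<\tau^{x,\alpha}\right),\quad \forall x\in\N.
\end{equation*}

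Combining this with the uniform irreducibility of Hypothesis~\ref{hyp:qsd}(ii) gives the first bound: from any point of $\{1,\ldots,N\}$, the process reaches (and stays at, in a strengthened form) a fixed state $y_0$ by time $t_0/2$ with probability at least $c>0$ uniformly in $\alpha$; applying the Markov property at time $t_0/2$ and taking $\nu=\delta_{y_0}$ yields the first condition with $c_1=\epsilon_0 c$. For the second condition, I would use essentially the same two tools: the Lyapunov estimate implies that from any $x$, the process enters $\{1,\ldots,N\}$ with high probability before a fixed short horizon, and the uniform irreducibility implies that, on the event of entering $\{1,\ldots,N\}$ at time $t_0/2$, the process survives up to time $t$ with probability at least $c_2\,\P_\nu(t<\tau^\alpha)$; coupled with the strong Markov property this gives the required comparison. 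The main obstacle is to organize these two estimates so that all constants depend only on $\bmax,\dmin,\epsilon,M$ and on the uniform bound in Hypothesis~\ref{hyp:qsd}(ii), not on $\alpha$; once this uniformity is achieved, \cite{champagnat-villemonais-15} delivers the existence of $\pi^\alpha$ and the uniform exponential bound with constants $C,\gamma$ independent of both $x$ and $\alpha$.
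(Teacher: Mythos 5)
Your overall architecture is the same as the paper's: reduce the theorem to the Champagnat--Villemonais conditions (A1)--(A2) with $\nu$ a Dirac mass, and verify them uniformly in $\alpha\in\Ac_M$ by combining a Lyapunov-type confinement estimate with the uniform irreducibility of Hypothesis~\ref{hyp:qsd}~(ii). However, there is a genuine gap in the confinement step. The drift condition you propose, $\Lc^\alpha\varphi(n)\leq K-\lambda\varphi(n)$ for an \emph{unbounded} increasing $\varphi$ (such as $\varphi(n)=n$), is too weak: it only yields $\E_x[\varphi(X^{x,\alpha}_t)]\leq e^{-\lambda t}\varphi(x)+K/\lambda$, whose right-hand side blows up with $x$ at any fixed $t$, so it cannot produce the bound $\P_x(X^{x,\alpha}_{t_0/2}\in\{1,\dots,N\})\geq\epsilon_0$ uniformly in $x\in\N$ that your argument then invokes. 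Indeed, your condition is already satisfied by a subcritical \emph{linear} birth--death process (take $b_n=n$, $d_n=2n$, $\varphi(n)=n$, so $\Lc\varphi=-\varphi$), for which the process does not come down from infinity and conditions (A1)--(A2) fail, the convergence to the QSD not being uniform in the initial state. So the claimed implication does not use the superlinearity of the death rate in any essential way and cannot hold as stated.

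What is actually needed --- and what the paper proves in Lemma~\ref{lem:downfrominf} --- is the ``descent from infinity'' in the quantitative form
\begin{equation*}
  \sup_{x\geq x_\lambda,\ \alpha\in\Ac_M}\E_x\!\left[e^{\lambda\zeta^\alpha_\lambda}\right]<\infty,
  \qquad \zeta^\alpha_\lambda:=\inf\{t\geq 0:\ X^\alpha_t=x_\lambda\},
\end{equation*}
for a $\lambda$ strictly larger than $\bar d_1$ (so that $e^{-\lambda t}\leq\P_1(t<\tau^\alpha)$). The paper obtains this by taking the \emph{bounded} Lyapunov function $\psi(x)=\sum_{y=1}^x y^{-1-\epsilon/2}$, for which Hypothesis~\ref{hyp:qsd}~(i) gives $\Lc^\alpha\psi(x)\leq-\tfrac{\dmin}{2}x^{\epsilon/2}\leq-\lambda\psi(x)$ for $x\geq x_\lambda$, with no additive constant; the boundedness of $\psi$ is what makes the resulting bound $\E_x[e^{\lambda\zeta^\alpha_\lambda\wedge s}]\leq\psi(\infty)/\psi(x_\lambda)$ uniform over all (arbitrarily large) $x$. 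With this uniform exponential moment in hand, both (A1) (hit $x_\lambda$ quickly, jump to $1$, then stay put) and (A2) (split $\P_x(t<\tau^\alpha)$ at the hitting time $\zeta^\alpha_\lambda$ and use $e^{-\lambda s}\leq\inf_{y\geq1}\P_y(s<\tau^\alpha)$ together with irreducibility on $\{1,\dots,x_\lambda\}$) go through essentially as you sketch, with all constants depending only on $\bmax$, $\dmin$, $\epsilon$, $M$, $\bar d_1$ and the infimum in Hypothesis~\ref{hyp:qsd}~(ii). To repair your proof you should replace your drift inequality either by the bounded-$\psi$ version above or by a genuinely superlinear drift such as $\Lc^\alpha\varphi\leq K-c\,\varphi^{1+\delta}$; the linear-in-$\varphi$ dissipation you wrote down does not suffice.
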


 This theorem will be proved with other properties related to QSD, gathered in Proposition~\ref{thm:QSD} below, in
 Section~\ref{sec:QSD}. For all $\alpha\in\mathcal{A}_M$, we denote by $\lambda^\alpha>0$ the absorption rate of
   $\pi^\alpha$.
 
 \begin{proposition}
  \label{thm:QSD}
  Under Hypotheses~\ref{hyp:population} and~\ref{hyp:qsd},
  for all $\alpha\in\mathcal{A}_M$, there exists a map $\eta^\alpha$ from $\N$ to $(0,+\infty)$ such that, for all $x\in\N$,
  \begin{equation}
    \label{eq:etadef}
    \sup_{x\in\N}\left|\P(t<\tau^{x,\alpha})e^{\lambda^\alpha t}-\eta^\alpha(x)\right|\leq C' e^{-\gamma t},
  \end{equation}
  where
  the constant $C'$ does not depend on $\alpha\in\mathcal{A}_M$. In addition, $\pi^\alpha(\eta^\alpha)=1$ and
  $\mathcal{L}^\alpha\eta^\alpha=-\lambda^\alpha\eta^\alpha$, where $\mathcal{L}^\alpha$ is the generator of $X^\alpha$.
\end{proposition}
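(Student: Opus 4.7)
The plan is to define
\begin{equation*}
\eta^\alpha_t(x) := e^{\lambda^\alpha t}\,\P(t<\tau^{x,\alpha}),
\end{equation*}
and to show that $(\eta^\alpha_t)_{t\ge 0}$ is uniformly Cauchy in $x\in\N$ with exponential rate, using as a black box the uniform exponential convergence of conditional distributions furnished by Theorem~\ref{thm:QSD-short-version}. Writing $P^\alpha_t f(x) := \E[f(\X{x,\alpha}_t)\mathds{1}_{t<\tau^{x,\alpha}}]$ for the killed semigroup, one has $\eta^\alpha_t = e^{\lambda^\alpha t}P^\alpha_t\mathds{1}$; the Markov property then yields $P^\alpha_s\eta^\alpha_t = e^{-\lambda^\alpha s}\eta^\alpha_{t+s}$, while the defining identity $\P_{\pi^\alpha}(t<\tau) = e^{-\lambda^\alpha t}$ of the QSD gives $\pi^\alpha(\eta^\alpha_t)=1$ for every $t\ge 0$.

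Setting $\mu^{x,\alpha}_t := \P(\X{x,\alpha}_t\in\cdot\mid t<\tau^{x,\alpha})$ and using the identity $\eta^\alpha_{t+u}(x)=\eta^\alpha_t(x)\,e^{\lambda^\alpha u}\,\mu^{x,\alpha}_t(P^\alpha_u\mathds{1})$ together with $e^{\lambda^\alpha u}\pi^\alpha(P^\alpha_u\mathds{1})=1$, I would derive the key bound
\begin{equation*}
|\eta^\alpha_{t+u}(x)-\eta^\alpha_t(x)| = \eta^\alpha_t(x)\,e^{\lambda^\alpha u}\bigl|(\mu^{x,\alpha}_t-\pi^\alpha)(P^\alpha_u\mathds{1})\bigr|
\le \eta^\alpha_t(x)\,\|\eta^\alpha_u\|_\infty\,\TV{\mu^{x,\alpha}_t-\pi^\alpha}.
\end{equation*}
Theorem~\ref{thm:QSD-short-version} controls the last factor by $Ce^{-\gamma t}$ uniformly in $x$ and $\alpha$, so the whole argument reduces to establishing a uniform bound
$\kappa := \sup_{u\ge 0,\,\alpha\in\Ac_M,\,x\in\N}\eta^\alpha_u(x) < \infty$.

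Obtaining this uniform bound is the main obstacle. To handle it I would exploit the strong absorption from Hypothesis~\ref{hyp:qsd}(i): because $d_n(a)\ge \dmin n^{1+\epsilon}$, the process leaves every compact set in bounded time, which provides Lyapunov-type estimates of the form $\eta^\alpha_t(x)\le C''(1+\varphi(x))$ with $\varphi$ bounded on any finite set and with constants uniform in $\alpha$, together with a uniform upper bound on $\lambda^\alpha$ (which follows from the fact that the constants in the proof of Theorem~\ref{thm:QSD-short-version} do not depend on $\alpha$). Combining the Lyapunov bound with the irreducibility Hypothesis~\ref{hyp:qsd}(ii) propagates the control to all of $\N$ and yields $\kappa<\infty$. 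This step is essentially the one of Champagnat--Villemonais and is the place where Hypothesis~\ref{hyp:qsd} is used most heavily.

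Once the uniform bound is secured, the displayed Cauchy estimate produces a nonnegative limit $\eta^\alpha(x) := \lim_{t\to\infty}\eta^\alpha_t(x)$ satisfying $\sup_x|\eta^\alpha_t(x)-\eta^\alpha(x)|\le C'e^{-\gamma t}$, with $C'$ independent of $\alpha$, which is exactly~\eqref{eq:etadef}. Passing to the limit $t\to\infty$ by dominated convergence in $\pi^\alpha(\eta^\alpha_t)=1$ gives the normalization $\pi^\alpha(\eta^\alpha)=1$, and in $P^\alpha_s\eta^\alpha_t = e^{-\lambda^\alpha s}\eta^\alpha_{t+s}$ gives $P^\alpha_s\eta^\alpha = e^{-\lambda^\alpha s}\eta^\alpha$; differentiating at $s=0^+$ yields $\Lc^\alpha\eta^\alpha = -\lambda^\alpha\eta^\alpha$. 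Finally, positivity of $\eta^\alpha$ on all of $\N$ is propagated from any state where it is positive (one exists because $\pi^\alpha(\eta^\alpha)=1$) through the identity $\eta^\alpha(x) = e^{\lambda^\alpha}\E[\eta^\alpha(\X{x,\alpha}_1)\mathds{1}_{1<\tau^{x,\alpha}}]$ combined with Hypothesis~\ref{hyp:qsd}(ii).
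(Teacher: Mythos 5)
Your overall architecture is sound and is, in substance, a re-derivation of~\cite[Prop.\,2.3]{champagnat-villemonais-15}, which the paper simply invokes: define $\eta^\alpha_t(x)=e^{\lambda^\alpha t}\P_x(t<\tau^\alpha)$, use the semigroup identity $\eta^\alpha_{t+u}(x)-\eta^\alpha_t(x)=\eta^\alpha_t(x)\,(\mu^{x,\alpha}_t-\pi^\alpha)(\eta^\alpha_u)$ and the uniform exponential mixing of Theorem~\ref{thm:QSD-short-version} to get a Cauchy estimate, then pass to the limit for the normalization, the eigenfunction property and positivity. All of that is correct, and the reduction to the single quantity $\kappa=\sup_{u,x,\alpha}\eta^\alpha_u(x)$ is exactly the reduction the paper makes (its $C'=C\kappa^\alpha$).

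The gap is in how you propose to bound $\kappa$. A Lyapunov estimate of the type provided by Hypothesis~\ref{hyp:qsd}(i) (as in Lemma~\ref{lem:downfrominf}) controls exponential moments of hitting times of a compact set; it does not, by itself, produce the sharp upper bound $\P_x(u<\tau^\alpha)\le \kappa\, e^{-\lambda^\alpha u}$ with the \emph{exact} rate $\lambda^\alpha$ — and matching the decay of the survival probability from above to the QSD rate is precisely the content of this step, so it cannot be obtained from "leaving compact sets in bounded time" plus irreducibility. Likewise, the uniform bound on $\lambda^\alpha$ does not follow from the uniformity of $C,\gamma$; the paper gets it from the elementary remark that absorption only occurs from state $1$ at rate $d^\alpha(1)\le\dmax_1$, whence $\P_x(t<\tau^\alpha)\ge e^{-\dmax_1 t}$ and $\lambda^\alpha\le\dmax_1$. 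Two correct routes to $\kappa<\infty$: (a) the paper's, citing~\cite[Rem.\,1]{champagnat-villemonais-15}, which gives $\kappa^\alpha\le\inf_{s>0}\exp\bigl(\lambda^\alpha s+C e^{(\lambda^\alpha-\gamma)s}/(1-e^{-\gamma s})\bigr)$, uniform once $\lambda^\alpha\le\dmax_1$; or (b) directly from Lemma~\ref{lem:hypcv}: integrating condition (A1) against $\pi^\alpha$ yields
\begin{equation*}
  e^{-\lambda^\alpha(t_0+s)}=\P_{\pi^\alpha}(t_0+s<\tau^\alpha)\ge c_1\,\P_{\pi^\alpha}(t_0<\tau^\alpha)\,\P_1(s<\tau^\alpha),
\end{equation*}
so $\P_1(s<\tau^\alpha)\le e^{-\lambda^\alpha s}/c_1$, and then (A2) gives $\P_x(s<\tau^\alpha)\le e^{-\lambda^\alpha s}/(c_1c_2)$ for all $x$, i.e.\ $\kappa\le 1/(c_1c_2)$ uniformly in $\alpha$. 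With either of these substituted for your Lyapunov paragraph, the proof closes.
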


\section{Main results}
\label{sec:main-result}

\subsection{Optimality criterion}
\label{sec:criteria}

Let $\beta\in\R$ be the discount factor and $f:\ZZ_+\x A\to\R$ be the running cost.

\begin{hypothesis}\label{hyp:cost}Assume that:\\
  \rmi $f$ is measurable and non-negative;\\
  \rmii $f$ is bounded and satisfies
  \begin{equation*}
    f(0,a) = 0,\quad \forall\,a \in A.
  \end{equation*}  
\end{hypothesis}
 
Consider the $\beta$-discounted infinite horizon criterion given by the cost/reward function, defined for all $\alpha\in\mathcal{A}$ and
$x\in\ZZ_+$, by
\begin{equation*}
  J_{\beta}(x,\alpha)
  := \E\left[\int_0^{+\infty} {\e{\beta s} f\left(\X{x,\alpha}_s,\alpha_s\right) ds}\right] 
  = \E_{x}\left[\int_0^{+\infty} {\e{\beta s} f\left(\X{\alpha}_s,\alpha_s\right) ds}\right],
\end{equation*}
where we denote by $\P_x$ the law of $X^{x,\alpha}$ and $\E_x$ the corresponding expectation, and use the generic notation $X^\alpha$
for the process $X^{x,\alpha}$ under $\P_x$. Notice that the cost/reward function can also be written as
\begin{equation*}
  J_{\beta}(x,\alpha) 
  = \E\left[\int_0^{\tau^{x,\alpha}} {\e{\beta t} f\left(\X{x,\alpha}_s,\alpha_s\right) ds}\right]
  = \E_{x}\left[\int_0^{\tau^{\alpha}} {\e{\beta t} f\left(\X{\alpha}_s,\alpha_s\right) ds}\right].
\end{equation*}

Under Hypothesis~\ref{hyp:cost}, the cost/reward function is well-defined and takes values in $[0,+\infty]$. The next proposition provides estimates for this function. 

\begin{proposition}\label{prop:controlqsd}
  Under Hypotheses~\ref{hyp:population}, \ref{hyp:qsd} and~\ref{hyp:cost}, there exists a constant $C>0$ such that, for all
  $\alpha\in\mathcal{A}_M$
  and $\beta<\lambda^{\alpha}$,
  \begin{equation}
    \label{eq:controlqsd}
    J_\beta(x,\alpha)\leq C\,\frac{\|f\|_\infty}{\lambda^\alpha-\beta},\quad\forall x\in\N.
  \end{equation}
\end{proposition}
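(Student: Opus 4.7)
The plan is to use the survival probability bound coming from Proposition~\ref{thm:QSD} together with the vanishing of $f$ at the absorbing state $0$, and then integrate.

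First, since $f(0,\cdot)\equiv 0$ by Hypothesis~\ref{hyp:cost}(ii) and $X^{x,\alpha}_s=0$ for all $s\geq\tau^{x,\alpha}$, the integrand of $J_\beta(x,\alpha)$ vanishes after extinction. Combined with the fact that $f$ is bounded, this gives
\begin{equation*}
  J_\beta(x,\alpha) \leq \|f\|_\infty \int_0^{+\infty} \e{\beta s}\,\P(s<\tau^{x,\alpha})\,ds.
\end{equation*}

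Next, I would extract a uniform-in-$\alpha$ upper bound for the survival probability from Proposition~\ref{thm:QSD}. Applying~\eqref{eq:etadef} at $t=0$, and noting that $\P(0<\tau^{x,\alpha})=1$ for every $x\in\N$, yields $\eta^\alpha(x)\leq 1+C'$ uniformly in $\alpha$ and $x$. Reinjecting this back into~\eqref{eq:etadef} gives, again uniformly,
\begin{equation*}
  \P(s<\tau^{x,\alpha}) \leq \left(\eta^\alpha(x)+C'\e{-\gamma s}\right)\e{-\lambda^\alpha s} \leq (1+2C')\,\e{-\lambda^\alpha s},\quad\forall\,s\geq 0.
\end{equation*}

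Finally, plugging this into the previous display and integrating, for $\beta<\lambda^\alpha$,
\begin{equation*}
  J_\beta(x,\alpha) \leq (1+2C')\|f\|_\infty \int_0^{+\infty} \e{(\beta-\lambda^\alpha)s}\,ds = \frac{(1+2C')\|f\|_\infty}{\lambda^\alpha-\beta},
\end{equation*}
which is the desired estimate with $C:=1+2C'$ independent of $\alpha$ and $x$.

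There is essentially no obstacle here: the whole content of the estimate lies in the uniform exponential decay of survival probabilities provided by Proposition~\ref{thm:QSD}, and the only minor point is to convert the two-sided approximation~\eqref{eq:etadef} into a clean one-sided upper bound, which is done by evaluating at $t=0$ to control $\eta^\alpha$ uniformly. The rest is routine integration exploiting $f(0,\cdot)\equiv 0$.
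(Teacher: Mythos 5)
Your proof is correct and follows essentially the same route as the paper: pass from $J_\beta$ to $\|f\|_\infty\int_0^\infty e^{\beta s}\P_x(s<\tau^\alpha)\,ds$ using $f(0,\cdot)\equiv 0$, then invoke~\eqref{eq:etadef} from Proposition~\ref{thm:QSD} to bound the survival probability by $(\eta^\alpha(x)+C')e^{-\lambda^\alpha s}$ and $\eta^\alpha(x)\leq C'+1$ uniformly in $\alpha$ and $x$. The only cosmetic difference is that you discard the $e^{-\gamma s}$ factor before integrating, whereas the paper keeps it inside the integral; both yield the same constant $C=2C'+1$.
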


\begin{proof}
 The Fubini-Tonelli theorem implies that
 \begin{align*}
  J_\beta\left(x,\alpha\right)  
  = \int_0^{+\infty} {\e{\beta t} \E_x\left[ f^{\alpha}\left(\X{\alpha}_t\right)\right] dt} 
  & \leq \lVert f\rVert_{\infty} \int_0^{+\infty} {\e{\beta t} \P_x\left(t<\tau^{\alpha}\right) dt} \\
  & \leq  (C'+\eta^\alpha(x)) \frac{\lVert f\rVert_{\infty}}{\lambda^\alpha-\beta},
 \end{align*}
where the last inequality follows from Proposition~\ref{thm:QSD}. Since this proposition also implies that
$\eta^\alpha(x)\leq C'+1$ for all $x\in\N$ and $\alpha\in\Ac_M$, the inequality~\eqref{eq:controlqsd} is proved 
with $C:= 2C'+1$. 
\end{proof}

\subsection{Extinction-oriented problems}
\label{sec:extinction}

In this section, we study stochastic control problems that favor extinction of the population. Since
$f(0,\cdot)\equiv 0$, we aim to minimize the optimality criterion, also called cost function, given in Section~\ref{sec:criteria}.

Let us define the infinite horizon value function by
\begin{equation*}
 v_{\beta}\left(x\right) := \inf_{\alpha\in\Ac} {J_{\beta}\left(x,\alpha\right)}.
\end{equation*}
 Before giving the results of this section, we need to make some assumptions.
 
\begin{hypothesis}\label{hyp:main}
 Assume that: \\
 \rmi $A$ is  a compact metric space;\\
 \rmii $b_n$, $d_n$, $p_{n,k}$ and $f(n,\cdot)$ are continuous on $A$ for all $n\in\N$.
\end{hypothesis}

\begin{hypothesis}\label{hyp:main-inf}
 Assume that there exists $x\in\N$ such that $f(x,a)>0$ for all $a\in A$.
\end{hypothesis} 

Hypothesis~\ref{hyp:main} is a classical assumption in the theory of stochastic control that ensures in particular the existence of
optimal Markov controls. 
  Note that, we could have allowed the control space $A(n)$ to depend on the position $n$
  of the process, by setting $f(n,a)=+\infty$ if $a\in A\setminus A(n)$. In this setting, our main results would hold true if
  Hypothesis~\ref{hyp:main}~(i) is replaced by the hypothesis that $A(n)$ is a compact metric space for all $n\in\ZZ_+$. However, we
  restrict here to the case of a control space independent of the position of the process to keep the presentation and notations
  simple.
Hypothesis~\ref{hyp:main-inf} is not really restrictive. Indeed, if it is not satisfied, then the value function is
identically zero since one can easily construct a Markov control such that the corresponding cost function vanishes.

The next result 
gives the behaviour of the value function as $\beta\uparrow\lambda^*$ where
\begin{equation*}
 \lambda^* := \sup_{\alpha\in\Ac_M} {\lambda^\alpha}.
\end{equation*}

\begin{theorem}\label{thm:main-inf}
  Under Hypotheses~\ref{hyp:population}, \ref{hyp:qsd}, \ref{hyp:cost}, \ref{hyp:main} and \ref{hyp:main-inf}, for all $\beta<\lambda^*$, there exists an
  optimal Markov control for the infinite horizon problem, that is, there exists $\alpha_{\beta}\in\Ac_M$ such that $v_\beta=J_\beta(\cdot,\alpha_\beta)$. 
  In addition, it holds
  \begin{equation*}
    \lim_{\substack{\beta\to\lambda^* \\ \beta<\lambda^*}} {\left(\lambda^* - \beta\right)v_\beta(x)} =
    \inf_{\substack{\alpha\in\Ac_M \\ \lambda^\alpha=\lambda^*}} {\left\{\pi^{\alpha}(f^{\alpha})\eta^\alpha(x)\right\}},
  \end{equation*}
  where for all $\alpha\in\mathcal{A}_M$, the function $f^\alpha$ is defined by $f^\alpha(n)=f(n,\alpha_n)$ for all $n\in\N$.
  Further, there exists $\alpha^*\in\Ac_M$ such that $\lambda^{\alpha^*}=\lambda^*$ and
  \begin{equation*}
    \inf_{\substack{\alpha\in\Ac_M \\ \lambda^\alpha=\lambda^*}} {\left\{\pi^{\alpha}(f^{\alpha})\eta^\alpha(x)\right\}} =
    \pi^{\alpha^*}(f^{\alpha^*})\eta^{\alpha^*}(x),\quad \forall x\in\N.
  \end{equation*}
\end{theorem}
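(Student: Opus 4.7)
My plan is to combine the dynamic programming results of Section~\ref{sec:ergodic-control} with a careful asymptotic expansion of the cost, based on the QSD machinery of Theorem~\ref{thm:QSD-short-version} and Proposition~\ref{thm:QSD}.

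\textbf{Step 1 (setup).} I would first invoke the dynamic programming principle and measurable selection theorem (proved in Section~\ref{sec:ergodic-control}) to obtain, for each $\beta < \lambda^*$, a Markov control $\alpha_\beta \in \Ac_M$ with $v_\beta = J_\beta(\cdot,\alpha_\beta)$. Identifying $\Ac_M$ with $A^\N$ equipped with the product topology, Hypothesis~\ref{hyp:main}(i) and Tychonoff's theorem make $\Ac_M$ compact.

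\textbf{Step 2 (continuity on $\Ac_M$).} Using the SDE~\eqref{eq:edsm} driven by fixed Poisson measures, together with the continuity of $b_n,d_n,p_{n,k}$ in $a$ (Hypothesis~\ref{hyp:main}(ii)), I would show by a pathwise argument that for each $x\in\N$ and $t\geq 0$ the maps $\alpha\mapsto\P_x(t<\tau^\alpha)$ and $\alpha\mapsto\E_x[f^\alpha(X^\alpha_t)]$ are continuous on $\Ac_M$. The uniform-in-$\alpha$ exponential convergences of Theorem~\ref{thm:QSD-short-version} and Proposition~\ref{thm:QSD} then transfer continuity to the limits, yielding continuity of $\alpha\mapsto\lambda^\alpha$, $\alpha\mapsto\eta^\alpha(x)$ and $\alpha\mapsto\pi^\alpha(f^\alpha)$. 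In particular the sup defining $\lambda^*$ is attained on the compact $\Ac_M$, and Proposition~\ref{prop:controlqsd} applied at such a maximizer yields $v_\beta(x)<\infty$.

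\textbf{Step 3 (uniform Abelian estimate; upper bound).} Set $L_\alpha(x):=\eta^\alpha(x)\pi^\alpha(f^\alpha)$ and $g_\alpha(x,t):=e^{\lambda^\alpha t}\E_x[f^\alpha(X^\alpha_t)]$. Combining Theorem~\ref{thm:QSD-short-version} and Proposition~\ref{thm:QSD} gives $|g_\alpha(x,t)-L_\alpha(x)|\leq Ke^{-\gamma t}$ with $K$ independent of $\alpha$. Since $J_\beta(x,\alpha)=\int_0^\infty e^{-(\lambda^\alpha-\beta)t}g_\alpha(x,t)\,dt$ when $\beta<\lambda^\alpha$, an elementary computation produces
\[
\bigl|(\lambda^\alpha-\beta)J_\beta(x,\alpha)-L_\alpha(x)\bigr| \leq \frac{K(\lambda^\alpha-\beta)}{\gamma},
\]
uniformly in $\alpha$. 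Applied to any $\tilde\alpha$ with $\lambda^{\tilde\alpha}=\lambda^*$ and combined with $v_\beta\leq J_\beta(\cdot,\tilde\alpha)$, this yields $\limsup_{\beta\uparrow\lambda^*}(\lambda^*-\beta)v_\beta(x)\leq\inf_{\tilde\alpha:\,\lambda^{\tilde\alpha}=\lambda^*}L_{\tilde\alpha}(x)$.

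\textbf{Step 4 (lower bound and selection of $\alpha^*$).} Take $\beta_n\uparrow\lambda^*$. By compactness extract a subsequence with $\alpha_{\beta_n}\to\alpha^*$. Under Hypothesis~\ref{hyp:main-inf} together with the irreducibility in Hypothesis~\ref{hyp:qsd}(ii), $\pi^\alpha$ has full support on $\N$, whence $\pi^\alpha(f^\alpha)>0$ for every $\alpha\in\Ac_M$, and consequently $J_\beta(x,\alpha)<\infty$ iff $\beta<\lambda^\alpha$. Since $v_{\beta_n}(x)<\infty$ this forces $\beta_n<\lambda^{\alpha_{\beta_n}}\leq\lambda^*$, so $\lambda^{\alpha_{\beta_n}}\to\lambda^*$ and, by Step 2, $\lambda^{\alpha^*}=\lambda^*$. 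Applying the uniform estimate of Step 3 at $\alpha_{\beta_n}$ and using continuity of $L_\alpha(x)$ gives $(\lambda^{\alpha_{\beta_n}}-\beta_n)v_{\beta_n}(x)\to L_{\alpha^*}(x)>0$. Writing $\rho_n:=(\lambda^*-\beta_n)/(\lambda^{\alpha_{\beta_n}}-\beta_n)\geq 1$, the identity $\rho_n\cdot[(\lambda^{\alpha_{\beta_n}}-\beta_n)v_{\beta_n}(x)]=(\lambda^*-\beta_n)v_{\beta_n}(x)$ together with the upper bound from Step 3 (taken at $\tilde\alpha=\alpha^*$) gives $\limsup_n\rho_n\cdot L_{\alpha^*}(x)\leq L_{\alpha^*}(x)$, forcing $\rho_n\to 1$. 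Hence $(\lambda^*-\beta_n)v_{\beta_n}(x)\to L_{\alpha^*}(x)$ for every $x$. Since any subsequential limit equals $\inf_{\lambda^\alpha=\lambda^*}L_\alpha(x)$ by the upper bound, the full limit exists and equals this infimum, with the \emph{same} $\alpha^*$ realising it simultaneously for all $x$.

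\textbf{Main obstacle.} The crux is Step 2: establishing joint continuity of $\alpha\mapsto(\lambda^\alpha,\eta^\alpha(\cdot),\pi^\alpha)$ on the product space $A^\N$. It relies on a delicate pathwise continuity analysis of the SDE~\eqref{eq:edsm} in $\alpha$ (driven by the same Poisson measures) combined with the uniformity in $\alpha$ of the QSD convergences. A secondary subtlety is the ``matching-rates'' argument of Step 4: one must show $\lambda^{\alpha_\beta}-\beta$ is asymptotic to $\lambda^*-\beta$, not merely that $\lambda^{\alpha_\beta}\to\lambda^*$, otherwise the lower bound could be off by a multiplicative factor.
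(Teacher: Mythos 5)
Your overall architecture (upper bound via the uniform Abelian estimate, extraction of a convergent optimal Markov control $\alpha_{\beta_n}\to\alpha^*$, identification of the limit) agrees with the paper, and your ``matching‐rates'' ratio argument in Step~4 is a correct replacement for the paper's Step~3. However, the central technical mechanism is genuinely different, and the part you flag as the ``main obstacle'' is indeed where your proposal and the paper's proof part ways.

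You make the whole argument rest on Step~2: global continuity, in the product topology on $A^{\N}$, of $\alpha\mapsto\lambda^\alpha$, $\alpha\mapsto\eta^\alpha(x)$ and $\alpha\mapsto\pi^\alpha(f^\alpha)$, obtained by a pathwise coupling of the SDE~\eqref{eq:edsm} followed by a transfer through the uniform QSD convergences. This is a strong claim and you only sketch it; moreover, to recover continuity of $\eta^\alpha(x)=\lim_t e^{\lambda^\alpha t}\P_x(t<\tau^\alpha)$ you already need continuity of $\lambda^\alpha$, so the transfer has to be organised carefully, and the pathwise coupling of~\eqref{eq:edsm} requires a localisation argument to control the (random and unbounded) range of states visited. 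None of this is supplied. Thus, as written, Step~2 is a gap, and since Steps~4 and the attainment of $\lambda^*$ both lean on it, the proposal does not yet close.

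The paper avoids this continuity entirely. Instead of showing that the maps $\alpha\mapsto\lambda^\alpha,\ \pi^\alpha,\ \eta^\alpha$ are continuous and then evaluating them along $\alpha_{\beta_n}$, it passes to the limit directly in the \emph{equations that characterise these objects}, using only the pointwise continuity of $b_n,d_n,p_{n,k},f(n,\cdot)$ in $a$ given by Hypothesis~\ref{hyp:main}: (a) tightness of $(\pi^{\alpha_{\beta_n}})$ (Lemma~\ref{lem:tight}) plus the Kolmogorov-type equation $\lambda^{\alpha}\pi^\alpha+\mathcal K^\alpha\pi^\alpha=0$ identifies the subsequential limit $\pi^*$ as the QSD of $X^{\alpha^*}$ with rate $\lambda^*$, giving $\alpha^*\in\Ac_M^*$; (b) passing to the limit in the HJB equation $\beta_n\phi_n+\Lc^{\alpha_{\beta_n}}\phi_n+(\lambda^{\alpha_{\beta_n}}-\beta_n)f^{\alpha_{\beta_n}}=0$ for $\phi_n:=(\lambda^{\alpha_{\beta_n}}-\beta_n)v_{\beta_n}$ shows that $\phi=\lim\phi_n$ is a bounded eigenfunction of $\Lc^{\alpha^*}$ with eigenvalue $-\lambda^*$, which by \cite[Cor.~2.4]{champagnat-villemonais-15} forces $\phi$ to be collinear with $\eta^{\alpha^*}$; (c) integrating HJB against $\pi^{\alpha_{\beta_n}}$ and taking the limit pins down the collinearity constant to $\pi^{\alpha^*}(f^{\alpha^*})$ via $\pi^{\alpha^*}(\eta^{\alpha^*})=1$. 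This is a much lighter passage-to-the-limit than what your Step~2 demands, and it derives the convergence $L_{\alpha_{\beta_n}}(x)\to L_{\alpha^*}(x)$ a posteriori rather than postulating it. If you want to salvage your route, the missing ingredient is precisely the idea of using the HJB equation together with the bounded-eigenfunction characterisation of $\eta^{\alpha^*}$ to identify $\phi$, in place of an SDE continuity argument.
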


The theorem above ensures that the $\beta$-discounted infinite horizon problem turns into an optimization problem on the QSD of the
controlled Markov processes as $\beta\uparrow\lambda^*$. This new problem consists in minimizing the integral of the running cost
w.r.t.\ the QSD over all the processes with the highest extinction rate. From that perspective, it is analoguous to the classical result
\eqref{eq:heuristics}.
However, the optimisation problem on the QSD is not homogeneous as it depends also on the starting point through the eigenvector
$\eta^{\alpha}$. Yet the corresponding optimal control $\alpha^*$ does not depend on the starting point.

As a by-product, Theorem~\ref{thm:main-inf} shows that the extinction rate cannot be increased above $\lambda^*$ by
  using non-Markov controls.

\begin{corollary}
  Under Hypotheses~\ref{hyp:population}, \ref{hyp:qsd} and \ref{hyp:main}, the population process cannot extinguish faster using a
  non-Markovian control than using a Markov control, in the sense that for all $\alpha\in\Ac$ and $x\in\N$,
 \begin{equation*}
  \int_0^{+\infty} {e^{\lambda^* s} \P_x\left(s<\tau^{\alpha}\right)  ds} = +\infty.
 \end{equation*}
\end{corollary}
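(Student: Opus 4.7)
The plan is to deduce this from Theorem~\ref{thm:main-inf} by a judicious choice of running cost. I would set $f(n,a):=\mathds{1}_{n\geq 1}$. This $f$ is measurable, non-negative, bounded, vanishes at $n=0$, is continuous in $a$ (as it does not depend on $a$), and satisfies $f(1,a)=1>0$ for all $a\in A$, so Hypotheses~\ref{hyp:cost} and~\ref{hyp:main-inf} hold, and the continuity requirement of Hypothesis~\ref{hyp:main}~(ii) on $f(n,\cdot)$ is trivially met (the other hypotheses of Theorem~\ref{thm:main-inf} are among those assumed in the corollary). For this $f$, Fubini-Tonelli gives, for every $\alpha\in\Ac$ and $\beta\in\R$,
\begin{equation*}
J_\beta(x,\alpha) = \int_0^{+\infty} e^{\beta s}\,\P_x(s<\tau^\alpha)\,ds.
\end{equation*}

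I would then argue by contradiction. Suppose there exist $x_0\in\N$ and an admissible (not necessarily Markov) control $\alpha_0\in\Ac$ such that
$K := \int_0^{+\infty} e^{\lambda^* s}\,\P_{x_0}(s<\tau^{\alpha_0})\,ds$ is finite. Since $e^{\beta s}\leq e^{\lambda^* s}$ whenever $\beta<\lambda^*$, we obtain $v_\beta(x_0)\leq J_\beta(x_0,\alpha_0)\leq K$ for every $\beta<\lambda^*$, and in particular $(\lambda^*-\beta)v_\beta(x_0)\to 0$ as $\beta\uparrow\lambda^*$.

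However, Theorem~\ref{thm:main-inf} provides some $\alpha^*\in\Ac_M$ with $\lambda^{\alpha^*}=\lambda^*$ for which
\begin{equation*}
\lim_{\beta\uparrow\lambda^*} (\lambda^*-\beta)\,v_\beta(x_0) = \pi^{\alpha^*}(f^{\alpha^*})\,\eta^{\alpha^*}(x_0).
\end{equation*}
Now $f^{\alpha^*}(n)=1$ for every $n\geq 1$ and $\pi^{\alpha^*}$ is a probability measure on $\N$, so $\pi^{\alpha^*}(f^{\alpha^*})=1$; moreover $\eta^{\alpha^*}(x_0)>0$ by Proposition~\ref{thm:QSD}. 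The right-hand side is therefore strictly positive, contradicting the preceding display. Hence no such pair $(x_0,\alpha_0)$ exists, which proves the corollary. There is really no obstacle in this argument once the indicator cost $f=\mathds{1}_\N$ is identified as the right test function; all the deep content has been absorbed into Theorem~\ref{thm:main-inf} and Proposition~\ref{thm:QSD}.
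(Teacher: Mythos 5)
Your proof is correct and is exactly the derivation the paper intends: the corollary is stated as an immediate by-product of Theorem~\ref{thm:main-inf}, obtained by specializing to the running cost $f=\mathds{1}_{\N}$ (which satisfies Hypotheses~\ref{hyp:cost} and~\ref{hyp:main-inf} automatically) so that $J_\beta(x,\alpha)=\int_0^{+\infty}e^{\beta s}\P_x(s<\tau^\alpha)\,ds$, and then noting that a finite value of the integral at $\beta=\lambda^*$ would force $(\lambda^*-\beta)v_\beta(x)\to 0$, contradicting the strictly positive limit $\eta^{\alpha^*}(x)>0$.
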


\subsection{Survival-oriented problems}

In this section, we provide results similar to Section~\ref{sec:extinction} for a control problem that favors the survival of the population.
Here,
we aim to maximize the optimality criterion of
Section~\ref{sec:criteria}. This result can also be seen as a version of Theorem~\ref{thm:main-inf} for non-positive running costs
(by a simple change of sign).

Let us define the infinite horizon value function by
\begin{equation*}
 w_{\beta}\left(x\right) := \sup_{\alpha\in\Ac} {J_{\beta}\left(x,\alpha\right)}.
\end{equation*}

The next result is anologuous to Theorem~\ref{thm:main-inf} as it  gives the behaviour of the value function as
$\beta\uparrow\lambda_*$ where
\begin{equation*}
 \lambda_* := \inf_{\alpha\in\Ac_M} {\lambda^\alpha}.
\end{equation*}

\begin{theorem}
  \label{thm:main-sup}
  Under Hypotheses~\ref{hyp:population}, \ref{hyp:qsd}, \ref{hyp:cost} and \ref{hyp:main}, for all
  $\beta<\lambda_*$, there exist an optimal Markov control for the infinite horizon problem, and for
  all $x\in\N$,
  \begin{equation*}
    \lim_{\substack{\beta\to\lambda_* \\ \beta<\lambda_*}} \left(\lambda_* - \beta\right) w_\beta(x) 
    = \sup_{\substack{\alpha\in\Ac_M \\ \lambda^\alpha=\lambda_*}} {\left\{\pi^{\alpha}(f^{\alpha})\eta^\alpha(x)\right\}}.
  \end{equation*}
  Further, there exists $\alpha_*\in\Ac_M$ such that $\lambda^{\alpha_*}=\lambda_*$ and
  \begin{equation*}
    \sup_{\substack{\alpha\in\Ac_M \\ \lambda^\alpha=\lambda_*}} {\left\{\pi^{\alpha}(f^{\alpha})\eta^\alpha(x)\right\}} =
    \pi^{\alpha_*}(f^{\alpha_*})\eta^{\alpha_*}(x),\quad\forall x\in\N.
  \end{equation*}
\end{theorem}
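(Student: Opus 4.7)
My plan is to mirror the proof of Theorem~\ref{thm:main-inf} with inf/sup exchanged and $\lambda^*$ replaced by $\lambda_*$, relying on the uniform‑in‑$\alpha$ constants of Theorem~\ref{thm:QSD-short-version} and Proposition~\ref{thm:QSD}. Two ingredients are used as black boxes: the dynamic programming and Hamilton--Jacobi--Bellman machinery of Section~\ref{sec:ergodic-control} (applied to $-f$, which turns the sup into an inf) gives, under Hypothesis~\ref{hyp:main}, an optimal Markov control $\alpha_\beta\in\Ac_M$ with $w_\beta(x)=J_\beta(x,\alpha_\beta)$ for every $\beta<\lambda_*$; and Proposition~\ref{prop:controlqsd} yields the uniform bound $w_\beta(x)\leq C\|f\|_\infty/(\lambda_*-\beta)$, since $\lambda^\alpha\geq\lambda_*$ for every $\alpha\in\Ac_M$.

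The analytical heart of the argument is the following uniform asymptotic, obtained by combining Theorem~\ref{thm:QSD-short-version} and~\eqref{eq:etadef} and using $f(0,\cdot)\equiv 0$: there exist $K,\gamma>0$ such that for every $\alpha\in\Ac_M$, $x\in\N$, $t\geq 0$,
\begin{equation*}
  \bigl|\E_x[f^\alpha(X^\alpha_t)] - \eta^\alpha(x)\,\pi^\alpha(f^\alpha)\,e^{-\lambda^\alpha t}\bigr| \leq K\,\|f\|_\infty\,e^{-(\lambda^\alpha+\gamma)t}.
\end{equation*}
Integrating against $e^{\beta t}$ for $\beta<\lambda^\alpha$,
\begin{equation*}
  J_\beta(x,\alpha) = \frac{\eta^\alpha(x)\,\pi^\alpha(f^\alpha)}{\lambda^\alpha-\beta}+r_\beta(x,\alpha),\qquad |r_\beta(x,\alpha)|\leq \frac{K\,\|f\|_\infty}{\lambda^\alpha+\gamma-\beta}.
\end{equation*}
Specialising to any $\alpha\in\Ac_M$ with $\lambda^\alpha=\lambda_*$ and letting $\beta\uparrow\lambda_*$ gives the lower bound $\liminf_{\beta\uparrow\lambda_*}(\lambda_*-\beta)\,w_\beta(x)\geq\eta^\alpha(x)\,\pi^\alpha(f^\alpha)$, and taking the sup over such $\alpha$ yields one half of the claimed equality.

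For the matching upper bound I would take $\beta_n\uparrow\lambda_*$ with associated optimal Markov controls $\alpha_{\beta_n}$. Since $A$ is compact metric (Hypothesis~\ref{hyp:main}~(i)), $\Ac_M\simeq A^{\ZZ_+}$ is compact metric for the product topology, so up to extraction $\alpha_{\beta_n}\to\alpha^\infty$ coordinatewise. Plugging the asymptotic expansion into $w_{\beta_n}=J_{\beta_n}(\cdot,\alpha_{\beta_n})$ and using $\lambda^{\alpha_{\beta_n}}\geq\lambda_*$,
\begin{equation*}
  (\lambda_*-\beta_n)\,w_{\beta_n}(x) \leq \frac{\lambda_*-\beta_n}{\lambda^{\alpha_{\beta_n}}-\beta_n}\,\eta^{\alpha_{\beta_n}}(x)\,\pi^{\alpha_{\beta_n}}(f^{\alpha_{\beta_n}})+o(1).
\end{equation*}
Either $\lambda^{\alpha_{\beta_n}}$ stays bounded away from $\lambda_*$ along a subsequence, in which case the prefactor vanishes and the limsup is $0$; or, up to a further extraction, $\lambda^{\alpha_{\beta_n}}\to\lambda_*$, and by continuity of $\alpha\mapsto(\lambda^\alpha,\eta^\alpha(x),\pi^\alpha(f^\alpha))$ one has $\lambda^{\alpha^\infty}=\lambda_*$ and the right-hand side converges to $\eta^{\alpha^\infty}(x)\,\pi^{\alpha^\infty}(f^{\alpha^\infty})$, which is bounded by the target supremum. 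Existence of a maximiser $\alpha_*$ follows from the same compactness-plus-continuity argument applied to the closed set $\{\alpha\in\Ac_M:\lambda^\alpha=\lambda_*\}$, which is nonempty because $\alpha\mapsto\lambda^\alpha$ attains its infimum on the compact set $\Ac_M$.

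The main obstacle is precisely this continuity of $\alpha\mapsto(\lambda^\alpha,\eta^\alpha(x),\pi^\alpha(f^\alpha))$ for the product topology on $A^{\ZZ_+}$. The uniform-in-$\alpha$ exponential convergence in Theorem~\ref{thm:QSD-short-version} and~\eqref{eq:etadef} allows one to replace $\lambda^\alpha,\eta^\alpha,\pi^\alpha$ by fixed-$t$ quantities with error uniform in $\alpha$, reducing the question to continuity in $\alpha$ of the law of $X^\alpha_t$ and of $\E_x[f^\alpha(X^\alpha_t)]$ for each fixed $t$. This is obtained from a pathwise coupling of the SDE~\eqref{eq:edsm} driven by the common Poisson measures $Q_1,Q_2$, combined with the continuity of $b_n,d_n,p_{n,k},f(n,\cdot)$ in $a$ from Hypothesis~\ref{hyp:main}~(ii); the extra dependence of $f^\alpha$ on $\alpha$ is handled by coupling pointwise convergence of $f^{\alpha_n}$ with the uniform tail control provided by Hypothesis~\ref{hyp:qsd}~(i).
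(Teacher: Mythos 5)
There is a genuine gap at the very first step: your justification that $w_\beta$ is bounded. You write that Proposition~\ref{prop:controlqsd} yields $w_\beta(x)\leq C\|f\|_\infty/(\lambda_*-\beta)$ because $\lambda^\alpha\geq\lambda_*$ for every $\alpha\in\Ac_M$. But Proposition~\ref{prop:controlqsd} only bounds $J_\beta(x,\alpha)$ for \emph{Markov} controls, whereas $w_\beta$ is a supremum over \emph{all} admissible (predictable, path-dependent) controls $\alpha\in\Ac$, for which no QSD, no $\lambda^\alpha$ and no estimate of the form~\eqref{eq:controlqsd} are available. This is not a technicality one can wave away: the entire DPP/HJB machinery of Section~\ref{sec:ergodic-control} is stated under the standing assumption that the value function is bounded (Hypothesis~\ref{hyp:bounded-sup}), so you cannot first invoke the HJB equation to conclude that the optimum is Markov and only then deduce boundedness --- that is circular. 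The paper explicitly flags this as the delicate point and devotes Lemma~\ref{lem:bounded} to it: one sets $\beta_*:=\inf\{\beta:\sup_x w_\beta(x)=+\infty\}$, shows $w_{\beta_*}$ is bounded by monotonicity, and then pushes past $\beta_*$ by proving, via the pseudo-Markov property and an induction on $n$, the moment bounds $\int_0^\infty s^n e^{\beta_* s}\P_x(s<\tau^\alpha)\,ds\leq C_0^{n+1}n!$ uniformly over all admissible $\alpha$, which give a radius of convergence $1/C_0$ for the discount parameter and hence a contradiction. Some argument of this kind (controlling arbitrary non-Markov controls) is indispensable and is entirely missing from your proposal.

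Beyond that, your route to identifying the limit differs from the paper's and is only sketched. You propose to prove continuity of $\alpha\mapsto(\lambda^\alpha,\eta^\alpha(x),\pi^\alpha(f^\alpha))$ for the product topology on $A^{\ZZ_+}$ by combining the uniform-in-$\alpha$ exponential convergence with a pathwise coupling at fixed $t$; the paper instead passes to the limit in the stationarity equation $\lambda^\alpha\pi^\alpha+\Kc^\alpha\pi^\alpha=0$ (using tightness of $(\pi^{\alpha_{\beta_n}})$ and uniqueness of the QSD) and in the HJB equation (using Scheff\'e's lemma and the uniqueness, up to scalars, of bounded eigenfunctions), then pins down the collinearity constant by integrating the HJB equation against $\pi^{\alpha_\beta}$. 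Your continuity-by-coupling idea is plausible and arguably more direct, but note that the paper's mechanism for producing some $\alpha$ with $\lambda^\alpha=\lambda_*$ (your nonemptiness claim) is itself nontrivial: it uses the special case $f\equiv 1$ together with Proposition~\ref{prop:minoration} to force $w_\beta\to+\infty$ and hence $\lambda^{\alpha_\beta}\to\lambda_*$, before passing to the limit. Asserting that $\alpha\mapsto\lambda^\alpha$ is continuous and attains its infimum is exactly the hard part, not a consequence of compactness alone. Finally, the case $\sup\{\pi^\alpha(f^\alpha):\lambda^\alpha=\lambda_*\}=0$, where $\lambda^{\alpha_\beta}$ need not converge to $\lambda_*$, deserves the separate treatment the paper gives it (Third Step); your dichotomy handles it only implicitly.
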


The last result also shows that the extinction rate cannot be reduced below $\lambda_*$ by using non-Markov controls.

\begin{corollary}
 Under Hypotheses~\ref{hyp:population}, \ref{hyp:qsd} and \ref{hyp:main}, the population process cannot survive longer using a non-Markovian control, in the sense that for all $\beta<\lambda_*$ and $x\in\N$,
 \begin{equation*}
  \sup_{\alpha\in\Ac} \left\{\int_0^{+\infty} {e^{\beta s} \P_x\left(s<\tau^{\alpha}\right)  ds}\right\} < +\infty.
 \end{equation*}
\end{corollary}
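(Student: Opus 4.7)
The plan is to reduce this corollary to a direct application of Theorem~\ref{thm:main-sup} with a judiciously chosen running cost. The natural candidate is $f(n,a) := \mathds{1}_{\{n \geq 1\}}$, which is independent of $a$, bounded by $1$, non-negative, vanishes at the absorbing state, and is trivially continuous in $a$. Hence Hypothesis~\ref{hyp:cost} is satisfied, and the continuity assumption on $f(n,\cdot)$ in Hypothesis~\ref{hyp:main} is automatic.

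With this choice, Fubini-Tonelli rewrites the criterion, for every $\alpha \in \Ac$, as
$$
J_\beta(x,\alpha) = \int_0^{+\infty} e^{\beta s}\,\E_x\bigl[f^\alpha(X_s^\alpha)\bigr]\,ds = \int_0^{+\infty} e^{\beta s}\,\P_x(s<\tau^\alpha)\,ds,
$$
so the quantity we wish to bound is exactly $w_\beta(x) = \sup_{\alpha \in \Ac} J_\beta(x,\alpha)$.

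The next step is to invoke Theorem~\ref{thm:main-sup}, which asserts that for $\beta < \lambda_*$ there exists an optimal Markov control $\alpha_\beta \in \Ac_M$ realising $w_\beta(x) = J_\beta(x,\alpha_\beta)$. Since $\alpha_\beta \in \Ac_M$, the definition of $\lambda_*$ gives $\lambda^{\alpha_\beta} \geq \lambda_*$, and in particular $\beta < \lambda^{\alpha_\beta}$. We are then in the regime of Proposition~\ref{prop:controlqsd}, which provides the explicit finite bound
$$
w_\beta(x) = J_\beta(x,\alpha_\beta) \leq C\,\frac{\|f\|_\infty}{\lambda^{\alpha_\beta}-\beta} \leq \frac{C}{\lambda_*-\beta} < +\infty,
$$
establishing the claim.

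There is essentially no obstacle in this argument: once Theorem~\ref{thm:main-sup} is available, the corollary is a one-line consequence once the right $f$ has been identified. The only point meriting care is verifying that the indicator cost is compatible with all the standing hypotheses, and that the a priori bound of Proposition~\ref{prop:controlqsd} upgrades from a control $\alpha_\beta$-specific estimate to a uniform bound via the inequality $\lambda^{\alpha_\beta} - \beta \geq \lambda_* - \beta$. All the conceptual content is hidden in Theorem~\ref{thm:main-sup}, whose existence of a Markov optimiser \emph{over the full class of admissible (possibly non-Markov) controls} is what rules out any non-Markovian strategy achieving longer survival.
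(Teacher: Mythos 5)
Your proof is correct, and it follows the route the paper intends for this corollary: choose $f=\mathds{1}_{\{n\geq 1\}}$, observe that the quantity to be bounded is exactly $w_\beta(x)$, and invoke the machinery of the maximization problem. Once Theorem~\ref{thm:main-sup} supplies a Markov optimizer $\alpha_\beta$ with $\lambda^{\alpha_\beta}\geq\lambda_*>\beta$, Proposition~\ref{prop:controlqsd} indeed gives the bound $C/(\lambda_*-\beta)$. A slightly shorter path is available: for this choice of $f$ the corollary is precisely the pointwise version of Lemma~\ref{lem:bounded}, whose proof itself reduces without loss of generality to $f=\mathds{1}_{\N}$. The one point I would correct is your closing remark that ``all the conceptual content is hidden in Theorem~\ref{thm:main-sup}'', namely in the existence of a Markov optimizer over the full class of admissible controls. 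That existence (Theorem~\ref{thm:alpha-opt-Markov}) is established only under Hypothesis~\ref{hyp:bounded-sup}, i.e.\ only once $w_\beta$ is already known to be bounded, and the boundedness of $w_\beta$ for $\beta<\lambda_*$ is essentially the statement of the corollary itself (strengthened to uniformity in $x$). The genuine work that rules out non-Markovian strategies is therefore the bootstrap and factorial-moment argument of Lemma~\ref{lem:bounded}, which handles arbitrary admissible controls via the pseudo-Markov property without presupposing a Markov optimizer. Your argument is not circular --- you are entitled to cite Theorem~\ref{thm:main-sup} as stated --- but it attributes the difficulty to the wrong place.
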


\section{Optimal control problems}
\label{sec:ergodic-control}
  
 The aim of this section is to derive the HJB equations corresponding to the infinite horizon problems and to prove that, under suitable conditions, there exists optimal Markov controls. A key step in our analysis is to establish the corresponding dynamic programming principle for which we provide a new proof based on a pseudo-Markov property (see Lemma~\ref{lem:cond} below). In addition, compared to the classical results available in the literature, we also deal with the case $\beta\geq 0$.

 \subsection{Preliminaries}
 
 Let $\Md$ be the space of integer-valued Borel measures on $\R_+\x\R_+$ that are locally finite, \ie, take finite values on bounded Borel sets. Equipped with the vague topology, it becomes a Polish space (see, \eg,~\cite[App.15.7.7]{kallenberg-86}). Denote by $(\Mc_s)_{s\geq 0}$ its canonical filtration, \ie, $\Mc_s$ is the smallest $\sigma$--algebra such that, for all $B\in\Bc([0,s])\ox\Bc(\R_+)$ bounded,  $\mu\in\Md\mapsto\mu(B)\in\ZZ_+$ is measurable, or equivalently, the $\sigma$--algebra generated by $\mu\in\Md\mapsto\restriction{\mu}{[0,s]}\in\Md$.
 
 Let us define the canonical space as follows: 
 \begin{equation*}
   \Om^\circ = \Md\x\Md,\quad \Fc^\circ_s = \Mc_s\ox\Mc_s,\quad \P^\circ = \Q\ox\Q,
 \end{equation*}
 where $\Q$ is the distribution on $\Md$ of a Poisson random measure on $\R_+\x\R_+$ with Lebesgue intensity measure. We also define
 for all $(\mu_1,\mu_2)\in \Om^\circ$ and $B\in\Bc(\R_+)\ox\Bc(R_+)$,
 \begin{equation*}
   Q^\circ_1\left(\mu_1,\mu_2,B\right) = \mu_1(B),\quad Q^\circ_2\left(\mu_1,\mu_2,B\right) = \mu_2(B).    
 \end{equation*}
  
 Our first goal in this section is to prove that the value function is invariant under change of probability space or Poisson random measures. 
 In particular, we can assume in the sequel that we work in the canonical space, which has the desirable property of being Polish. As a first step,
 the next proposition provides an essential representation of admissible controls.  
 
 \begin{proposition}\label{prop:contcaract}
  A process $\alpha$ is an admissible control if and only if there exists a process $\alpha^{\circ}:\R_+\x \Om^\circ\to A$ predictable w.r.t. $(\Fc^\circ_s)_{s\geq0}$ such that for all $s\geq0$ and $\om\in\Om$,
  \begin{equation}\label{eq:contcaract}
   \alpha_s(\om) = \alpha^{\circ}_s\left(Q_1(\om),Q_2(\om)\right) = \alpha^{\circ}_s\left(\restriction{Q_1}{[0,s)}(\om),\restriction{Q_2}{[0,s)}(\om)\right).
  \end{equation}
 \end{proposition}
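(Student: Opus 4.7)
The plan is to exploit that $(\Fc_s)_{s\geq 0}$ is generated by $Q_1$ and $Q_2$. Set $\Phi:\Om\to\Om^\circ$, $\omega\mapsto(Q_1(\omega),Q_2(\omega))$; by the very construction of the two filtrations one has $\Fc_s=\Phi^{-1}(\Fc^\circ_s)$ for every $s\geq 0$. The ``if'' direction is then immediate: the product map $\Id_{\R_+}\times\Phi$ pulls the predictable $\sigma$-algebra on $\R_+\times\Om^\circ$ back into the predictable $\sigma$-algebra on $\R_+\times\Om$, as can be checked on a generating family of predictable rectangles $(u,v]\times F^\circ$ with $F^\circ\in\Fc^\circ_u$, together with $\{0\}\times F^\circ_0$ for $F^\circ_0\in\Fc^\circ_0$. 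Hence $\alpha_s(\omega):=\alpha^\circ_s(\Phi(\omega))$ is an $A$-valued admissible control whenever $\alpha^\circ$ is $(\Fc^\circ_s)$-predictable and $A$-valued.

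For the ``only if'' direction I would first produce the factorization for bounded real-valued processes by a monotone class argument, and then lift to $A$-valued controls via the Polish structure of $A$. Let $\Hc$ denote the class of bounded real-valued $(\Fc_s)$-predictable processes $\alpha$ admitting a bounded real-valued $(\Fc^\circ_s)$-predictable $\alpha^\circ$ such that $\alpha_s(\omega)=\alpha^\circ_s(\Phi(\omega))$ for every $(s,\omega)$. Using $\Fc_u=\Phi^{-1}(\Fc^\circ_u)$, any indicator of a predictable generator $(u,v]\times F$ with $F=\Phi^{-1}(F^\circ)$ belongs to $\Hc$ via $\alpha^\circ=\mathds{1}_{(u,v]\times F^\circ}$, and similarly for the generators $\{0\}\times F_0$. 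Since $\Hc$ is stable under finite linear combinations and bounded monotone pointwise limits, the functional monotone class theorem yields that $\Hc$ contains every bounded real-valued $(\Fc_s)$-predictable process.

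To upgrade to $A$-valued controls I would use that any Polish space embeds as a Borel subset of $[0,1]^{\N}$ through a countable separating family of continuous $[0,1]$-valued functions $(\psi_n)_{n\geq 1}$: applying the real-valued representation to each $\psi_n\circ\alpha$ yields $(\Fc^\circ_s)$-predictable real processes $\beta^n$ with $\beta^n_s\circ\Phi=\psi_n\circ\alpha_s$ pointwise, and composing the $[0,1]^{\N}$-valued predictable process $(\beta^n)_{n\geq 1}$ with the Borel-measurable inverse of the embedding (extended to a fixed element of $A$ outside the image) produces the desired $(\Fc^\circ_s)$-predictable $A$-valued $\alpha^\circ$. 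The second equality in~\eqref{eq:contcaract} then follows from the standard fact that $(\Fc^\circ_s)$-predictability forces $\alpha^\circ_s$ to be measurable with respect to $\Fc^\circ_{s-}:=\bigvee_{u<s}\Fc^\circ_u$, a $\sigma$-algebra generated by the strict-past restriction maps $(\mu_1,\mu_2)\mapsto(\restriction{\mu_1}{[0,s)},\restriction{\mu_2}{[0,s)})$. The main obstacle is this lifting step: the pointwise-everywhere (rather than merely $\P$-a.s.) factorization has to survive the composition with the Borel inverse of the embedding, which is precisely where the Polish structure of $A$ (via Kuratowski's theorem on injective Borel maps between standard Borel spaces) is needed.
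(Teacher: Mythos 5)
Your proof is correct and follows essentially the same route as the paper's: both directions rest on the predictability of the strict-past restriction process, a monotone class argument over the predictable generators $(t_1,t_2]\times F$ using $\Fc_{t_1}=\sigma(\restriction{Q_1}{[0,t_1]},\restriction{Q_2}{[0,t_1]})$, and a reduction of the Polish space $A$ to a real-valued setting via a Borel isomorphism (the paper simply takes $A=\R$ up front rather than embedding into $[0,1]^{\N}$, but this is the same idea).
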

 
 \begin{proof}
  It is clear that the process $(s,\om)\mapsto (\restriction{Q_1}{[0,s)}(\om),\restriction{Q_2}{[0,s)}(\om))$ is predictable w.r.t.
  $(\Fc_s)_{s\geq0}$. 
  Thus, if $\alpha$ satisfies~\eqref{eq:contcaract}, it is an admissible control. For the converse implication, we can suppose that $A=\R$ since any Polish space is Borel isomorphic to a Borel subset of $\R$ (see, \eg,~\cite{ikeda-watanabe-89}). We begin with a process $\alpha$ of the form $\alpha=\mathds{1}_{(t_1, t_2]\x F}$ with $F\in\Fc_{t_1}$ and $0<t_1<t_2$. Since $\Fc_{t_1}$ coincides with the $\sigma$--algebra generated by $(\restriction{Q_1}{[0,t_1]},\restriction{Q_2}{[0,t_1]})$, there exists $F^{\circ}\in\Fc^\circ$ such that
  \begin{equation*}
    \alpha_s(\om) = \mathds{1}_{(t_1, t_2]\x F^\circ}\left(s,\restriction{Q_1}{[0,t_1]}(\om),\restriction{Q_2}{[0,t_1]}(\om)\right),\quad \forall\, s\geq0,\, \om\in\Om.
  \end{equation*}
  Hence, the relation~\eqref{eq:contcaract} is satisfied with
  \begin{equation*}
    \alpha^{\circ}_s\left(\mu_1,\mu_2\right) = \mathds{1}_{(t_1, t_2]\x F^{\circ}}\left(s,\restriction{\mu_1}{[0,t_1]},\restriction{\mu_2}{[0,t_1]}\right),\quad \forall\, s\geq0,\, \om\in\Om.
  \end{equation*} 
  Further, $\alpha^{\circ}$ is clearly predictable w.r.t.\ $(\Fc^\circ_s)_{s\geq0}$ as a c\`ag adapted process.
  The corresponding result for $\alpha = \mathds{1}_{\{0\}\x F}$ with $F\in\Fc_0=\{\emptyset,\Om\}$ is obvious. Recall now that the events of the form $(t_1, t_2]\x F$ and $\{0\}\x F$ as above generate the predictable $\sigma$-algebra (see, \eg,~\cite[Thm.IV.64]{dellacherie-meyer-75}). Hence the conclusion follows from a monotone class argument.
 \end{proof}
 
 Let $x\in\ZZ_+$, $\alpha\in\Ac$ and the corresponding $\alpha^{\circ}$ given by Proposition~\ref{prop:contcaract} be fixed. In view of Proposition~\ref{prop:welldefined}, there exists a unique (up to indistinguishability) adapted and c\`adl\`ag process $\X{x,\alpha^\circ}$ in $(\Om^\circ,(\Fc^\circ_s)_{s\geq 0},\P^\circ)$ satisfying
\begin{multline*}
  \X{x,\alpha^\circ}_s = x + \int_{(0,s]\x\R_+}{\left( \sum_{k\geq 1} { k \mathds{1}_{I_k\left(\X{x,\alpha^\circ}_{\theta-},\alpha^\circ_{\theta}\right)}(z) } \right) Q^\circ_1(d\theta,d z)} \\
  - \int_{(0,s]\x\R_+} {\mathds{1}_{\left[0,d\left(\X{x,\alpha^\circ}_{\theta-},\alpha^\circ_{\theta}\right)\right)}(z) \,Q^\circ_2(d\theta,d z)},
  \quad \forall\,s\geq 0,\, \P^{\circ}-\as
\end{multline*}
 Then it is clear that $\X{x,\alpha^\circ}(Q_1,Q_2)$ satisfies the relation~\eqref{eq:eds} in $(\Om,(\Fc_s)_{s\geq0},\P)$. It results from Proposition~\ref{prop:welldefined} that $\X{x,\alpha^\circ}(Q_1,Q_2)$ is $\P$--indistinguishable from $\X{x,\alpha}$. In particular, the distribution of $\X{x,\alpha}$ under $\P$ coincides with the distribution of $\X{x,\alpha^\circ}$ under $\P^{\circ}$. As a consequence, 
 the value function is invariant under change of probability space or Poisson random measures.  
 In the rest of the paper, we assume that we work in the canonical space and we omit the index $\circ$ in the notations.
\medskip

 To conclude this section, we give a technical lemma which plays a crucial role in the proof of the dynamic programming principle
 (DPP for short) below: the so-called pseudo-Markov property for controlled branching processes, as it reduces to the (strong) Markov property in the uncontrolled setting. Let us start with a definition.
 For $\alpha\in\Ac$, we define, for all $t\geq0$ and $\omb\in\Om$, the shifted control $\alpha^{t,\omb}$ as follows:
 \begin{equation*}
  \alpha^{t,\omb}_s\left(\om\right) := \alpha_{t+s}\left(Q_1(\omb)\ox_t Q^t_1(\om), Q_2(\omb)\ox_t Q^t_2(\om)\right), \quad\forall\,s\geq0,\, \om\in\Om,
 \end{equation*}
 where, for $(\mu_1,\mu_2)\in \Md^2$, $\mu_1\ox_t\mu_2$ denotes the element of $\Md$ given by
 \begin{equation*}
  \mu_1\ox_t\mu_2 = \restriction{\mu_1}{[0,t]} + \restriction{\mu_2}{(t,+\infty)},
 \end{equation*}
 and, for $\mu\in\Md$, $\mu^{t}$ denotes the image measure of $\mu$ by the map $(s,z)\in\R_+\x\R_+\mapsto (t+s,z)\in[t,+\infty)\x\R_+$. In particular, $\omb$ being fixed, it is clear that $\alpha^{t,\omb}$ is an admissible control.

 \begin{lemma}\label{lem:cond}
  Under Hypothesis~\ref{hyp:population}, for all $x\in\ZZ_+$, $\alpha\in\Ac$,  $s\in\R_+$, $\tau$ stopping time valued in $[0,s]$ and $\varphi\in(\R_+)^{\ZZ_+}$, it holds
  \begin{equation*}
   \E\left[\varphi\left(\X{x,\alpha}_{s}\right)\Big|\Fc_{\tau}\right] (\omb) =  \E\left[\varphi\left(\X{\X{x,\alpha}_{\tau}(\omb),\alpha^{\tau(\omb),\omb}}_{s-\tau(\omb)}\right)\right],\quad \P\left(d\omb\right)-\as
  \end{equation*} 
 \end{lemma}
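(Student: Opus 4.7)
My approach is to reduce the claim to the strong Markov property of the driving Poisson measures $(Q_1,Q_2)$ via the pathwise structure of the SDE~\eqref{eq:eds}. Since $(\Fc_s)_{s\geq 0}$ is generated by the restrictions of $Q_1,Q_2$ to $[0,s]\x\R_+$, conditioning on $\Fc_\tau$ freezes these past restrictions, while the evolution of $X^{x,\alpha}$ after $\tau$ is driven only by the post-$\tau$ increments of the Poisson measures. The definition of $\alpha^{t,\bar\omega}$ is tailor-made to make this decoupling explicit at the level of the control.

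I would first introduce, for each $\omega\in\Om$, the time-shifted Poisson measures $\hat Q_i(\omega,B):=Q_i(\omega,\{(\tau(\omega)+u,z):(u,z)\in B\})$. The classical strong Markov property of Poisson random measures at the $(\Fc_s)$-stopping time $\tau$ ensures that $(\hat Q_1,\hat Q_2)$ has the same distribution as $(Q_1,Q_2)$ and is independent of $\Fc_\tau$. Next, the change of variables $\theta=\tau(\omega)+u$ in the two integrals of~\eqref{eq:eds} on $(\tau,\tau+r]$ shows that $(X^{x,\alpha}_{\tau+u})_{u\geq 0}$ satisfies, $\P$-a.s., an SDE of the same form as~\eqref{eq:eds}, driven by $(\hat Q_1,\hat Q_2)$, started at $X^{x,\alpha}_\tau$ with shifted control $u\mapsto\alpha_{\tau+u}$. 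Fix now $\bar\omega\in\Om$ and pick $\omega'\in\Om$ such that $(Q_1(\omega'),Q_2(\omega'))=(\hat Q_1(\bar\omega),\hat Q_2(\bar\omega))$: a direct computation from the definitions of $\ox_t$ and $\mu^t$ yields $Q_i(\bar\omega)\ox_{\tau(\bar\omega)} Q_i^{\tau(\bar\omega)}(\omega')=Q_i(\bar\omega)$ for $i=1,2$, and hence $\alpha^{\tau(\bar\omega),\bar\omega}_u(\omega')=\alpha_{\tau(\bar\omega)+u}(\bar\omega)$. Combined with the pathwise construction of the SDE solution in Proposition~\ref{prop:welldefined} applied to the Poisson atoms $(\hat Q_1(\bar\omega),\hat Q_2(\bar\omega))$, this yields the identity
\begin{equation*}
X^{x,\alpha}_{\tau(\bar\omega)+u}(\bar\omega)=X^{X^{x,\alpha}_\tau(\bar\omega),\,\alpha^{\tau(\bar\omega),\bar\omega}}_u\!\bigl(\hat Q_1(\bar\omega),\hat Q_2(\bar\omega)\bigr),\qquad u\geq 0,
\end{equation*}
valid for $\P$-a.e. $\bar\omega$.

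Setting $u=s-\tau(\bar\omega)$, applying $\varphi$ and taking conditional expectations then yields the formula: since $(\hat Q_1,\hat Q_2)$ is independent of $\Fc_\tau$ and equal in law to $(Q_1,Q_2)$, the ``freezing lemma'' (Fubini applied to the product of the conditional law of $\Fc_\tau$ and the law of $(\hat Q_1,\hat Q_2)$) gives $\E[\varphi(X^{x,\alpha}_s)\mid\Fc_\tau](\bar\omega)=\E[\varphi(X^{X^{x,\alpha}_\tau(\bar\omega),\,\alpha^{\tau(\bar\omega),\bar\omega}}_{s-\tau(\bar\omega)})]$ for $\P$-a.e.\ $\bar\omega$. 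The main obstacle is the joint measurability of the map $(\bar\omega,\omega')\mapsto X^{X^{x,\alpha}_\tau(\bar\omega),\,\alpha^{\tau(\bar\omega),\bar\omega}}_{s-\tau(\bar\omega)}(\omega')$ needed to apply Fubini; this requires checking that the gluing operation $(t,\mu,\mu')\mapsto \mu\ox_t (\mu')^t$ is jointly measurable in all arguments, and that this measurability propagates through the predictable representation of Proposition~\ref{prop:contcaract} and through the recursive pathwise construction in the proof of Proposition~\ref{prop:welldefined}. Once this bookkeeping is complete, the remainder of the argument is routine.
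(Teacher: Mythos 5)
Your plan is correct and takes essentially the same route as the paper's proof: both reduce the claim to a pathwise identification of the post-$\tau$ process with $X^{X^{x,\alpha}_\tau(\bar\omega),\,\alpha^{\tau(\bar\omega),\bar\omega}}$ driven by the time-shifted Poisson measures, and both rest on the strong Markov property of $(Q_1,Q_2)$ at $\tau$. The paper phrases the conditioning step through a family of regular conditional probabilities $(\P_{\bar\omega})_{\bar\omega}$ of $\P$ given $\Fc_\tau$ rather than your independence/freezing-lemma formulation, but the two are interchangeable and face the same measurability bookkeeping you flag at the end.
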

 
 \begin{proof}
  By definition of $\X{x,\alpha}$, it is clear that
  \begin{multline*}
    \X{x,\alpha}_s = \X{x,\alpha}_{\tau\wedge s} + \int_{(\tau,\tau\vee s]\x\R_+}{\left( \sum_{k\geq 1} { k \mathds{1}_{I_k\left(\X{x,\alpha}_{\theta-},\alpha_\theta\right)}(z) } \right) Q_1(d\theta,d z)} \\
    - \int_{(\tau,\tau\vee s]\x\R_+} {\mathds{1}_{\left[0,d\left(\X{x,\alpha}_{\theta-},\alpha_\theta\right)\right)}(z) \, Q_2(d\theta,d z)},
    \quad \forall\,s\geq 0, ~\P\mbox{--a.s.}
  \end{multline*}	
  Since $\P$ is a probability measure on the Borel $\sigma$-algebra of a Polish space, there exists $(\P_{\omb})_{\omb \in \Om}$ a family of regular conditional probabilities of $\P$ given $\Fc_{\tau}$ (see, e.g., Karatzas and Shreve~\cite[Sec.5.3.C]{karatzas-shreve-91}). It follows that, for $\P$--a.a. $\omb \in \Om$,
  \begin{equation}\label{eq:condequiv}
    \E^{\P}\left[ \varphi\left(\X{x,\alpha}_s\right)\Big|\,\Fc_\tau\right](\omb) = \E^{\P_{\omb}}\left[ \varphi\left(\X{x,\alpha}_s\right) \right],
  \end{equation}
  and
  \begin{equation*}\label{eq:rcpdprop}
    \P_{\omb} \left( \tau = \tau(\omb), \X{x,\alpha}_{\tau\wedge \cdot} = \X{x,\alpha}_{\tau\wedge \cdot}(\omb), \alpha = \bar{\alpha}^{\tau(\omb),\omb} \right) = 1,
  \end{equation*}
  where 
  \begin{equation*}
   \bar{\alpha}^{\tau(\omb),\omb}_s\left(\om\right) := \alpha_s\left(Q_1(\omb)\ox_{\tau(\omb)} \,Q_1(\om), Q_2(\omb)\ox_{\tau(\omb)} \,Q_2(\om)\right), \quad\forall\,s\geq0,\, \om\in\Om.
  \end{equation*}
  In particular, notice that $\bar{\alpha}^{\tau(\omb),\omb}_{\tau(\omb) + \cdot}(Q_1^{\tau(\omb)},Q_2^{\tau(\omb)})$ coincides with $\alpha^{\tau(\omb),\omb}$ by definition.
  We deduce that, for $\P$--a.a. $\omb\in\Om$,
  \begin{multline*}
    \X{x,\alpha}_s = \X{x,\alpha}_{\tau}(\omb) + \int_{(\tau(\omb), s]\x\R_+}{\left( \sum_{k\geq 1} { k \mathds{1}_{I_k\left(\X{x,\alpha}_{\theta-}, \bar{\alpha}^{\tau(\omb),\omb}_\theta\right)}(z) } \right) Q_1(d\theta,d z)} \\
    - \int_{(\tau(\omb), s]\x\R_+} {\mathds{1}_{\left[0,d\left(\X{x,\alpha}_{\theta-}, \bar{\alpha}^{\tau(\omb),\omb}_\theta\right)\right)}(z) \, Q_2(d\theta,d z)},
    \quad \forall\,s\geq \tau(\omb),\ \P_{\omb}-\as
  \end{multline*}
  In other words, if we denote by $\mu^{-t}$ the image measure of $\restriction{\mu}{[t,\infty)}$ by the map $(s,z)\in[t,+\infty)\x\R_+\mapsto (s-t,z)\in\R_+\x\R_+$, it holds for $\P$--a.a. $\omb\in\Om$,
  \begin{multline*}
    \X{x,\alpha}_{\tau(\omb)+s} = \X{x,\alpha}_{\tau}(\omb) + \int_{(0, s]\x\R_+}{\left( \sum_{k\geq 1} { k \mathds{1}_{I_k\left(\X{x,\alpha}_{\tau(\omb)+\theta-}, \bar{\alpha}^{\tau(\omb),\omb}_{\tau(\omb)+\theta}\right)}(z) } \right) Q_1^{-\tau(\omb)}(d\theta,d z)} \\
    - \int_{(0, s]\x\R_+} {\mathds{1}_{\left[0,d\left(\X{x,\alpha}_{\tau(\omb)+\theta-}, \bar{\alpha}^{\tau(\omb),\omb}_{\tau(\omb)+\theta}\right)\right)}(z) \,Q_2^{-\tau(\omb)}(d\theta,d z)},
    \quad \forall\,s\geq 0,\ \P_{\omb}-\as
  \end{multline*}
  Hence, $\X{x,\alpha}_{\tau(\omb)+\cdot}$ is $\P_{\omb}$--indistinguishable from $\X{\X{x,\alpha}_{\tau}(\omb),\alpha^{\tau(\omb),\omb}}(Q^{-\tau(\omb)}_1, Q^{-\tau(\omb)}_2)$ for $\P$--a.a. $\omb\in\Om$. In particular, the distribution of $\X{x,\alpha}_{\tau(\omb)+\cdot}$ under $\P_{\omb}$ coincides with the distribution of $\X{\X{x,\alpha}_{\tau}(\omb),\alpha^{\tau(\omb),\omb}}$ under $\P$ for $\P$--a.a. $\omb\in\Om$. We then conclude the proof by using~\eqref{eq:condequiv}.
 \end{proof}
 
 \subsection{Minimization problem}

In this section, we derive the HJB equation corresponding to the minization problem and we prove the existence of an optimal Markov control.  All these results are stated under the assumption that the value function $v_\beta$ is bounded, which is implied in our case by Proposition~\ref{prop:controlqsd} if $\beta<\lambda^*$.

\begin{hypothesis}\label{hyp:bounded-inf}
 The value function $v_\beta$ is bounded on $\ZZ_+$.
\end{hypothesis}

We begin with a preliminary result, which plays a crucial role in the proof of the DPP. It is a straightforward application of the
pseudo-Markov property of Lemma~\ref{lem:cond}. 

\begin{proposition}\label{prop:costcond}
 Under Hypotheses~\ref{hyp:population}, \ref{hyp:cost}~(i) and \ref{hyp:bounded-inf}, for all $\alpha\in\Ac$, $x\in\ZZ_+$ and $\tau$ stopping time, it holds
 \begin{equation*}
  J_{\beta}\left(x,\alpha\right) = \E\left[\int_0^{\tau} {\e{\beta s} f\left(\X{x,\alpha}_s,\alpha_s\right) ds} + \e{\beta\tau} J_\beta\left(\X{x,\alpha}_\tau,\alpha^{\tau}\right)\right],
 \end{equation*}
 where we mean by the notation $\E\left[\e{\beta\tau} J_\beta\left(\X{x,\alpha}_\tau,\alpha^{\tau}\right)\right]$ the quantity
 \begin{equation*}
   \int_{\Om} {\e{\beta\tau(\om)} J_\beta\left(\X{x,\alpha}_\tau(\om),\alpha^{\tau(\om),\om}\right) \,\P(d\om)}.
 \end{equation*}
\end{proposition}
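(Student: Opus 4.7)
The plan is to split the time integral defining $J_\beta(x,\alpha)$ at the stopping time $\tau$ and to identify the tail part with the expectation of the cost functional starting afresh from $\X{x,\alpha}_\tau$ under the shifted control $\alpha^{\tau}$. Non-negativity of $f$ (Hypothesis~\ref{hyp:cost}~(i)) allows every manipulation to be carried out in $[0,+\infty]$, while Hypothesis~\ref{hyp:bounded-inf} guarantees that the resulting identity is meaningful (\ie\ finite).

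First, I would decompose
\[
J_\beta(x,\alpha) = \E\left[\int_0^\tau \e{\beta s} f(\X{x,\alpha}_s,\alpha_s)\,ds\right] + \E\left[\int_\tau^{+\infty} \e{\beta s} f(\X{x,\alpha}_s,\alpha_s)\,ds\right].
\]
The first term already has the desired form. For the second term, the change of variable $s=\tau+u$ followed by Fubini--Tonelli (legitimate since the integrand is non-negative) yields
\[
\int_0^{+\infty} \e{\beta u}\, \E\!\left[\e{\beta\tau}\, f(\X{x,\alpha}_{\tau+u},\alpha_{\tau+u})\right] du.
\]

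The heart of the argument is to condition on $\Fc_\tau$ inside this expectation. Because $\e{\beta\tau}$ is $\Fc_\tau$-measurable, the task reduces to computing $\E[f(\X{x,\alpha}_{\tau+u},\alpha_{\tau+u})\mid\Fc_\tau](\omb)$. This is a mild strengthening of Lemma~\ref{lem:cond}: there the test function $\varphi(\X{x,\alpha}_{\tau+u})$ depends on the state only, whereas here $f$ depends jointly on the state and the control. Re-examining the proof of Lemma~\ref{lem:cond}, on the regular conditional probability $\P_\omb$ the process $(\X{x,\alpha}_{\tau(\omb)+u})_{u\geq 0}$ is $\P_\omb$-indistinguishable from $\X{\X{x,\alpha}_\tau(\omb),\alpha^{\tau(\omb),\omb}}(Q_1^{-\tau(\omb)},Q_2^{-\tau(\omb)})$, and by the very definition of $\alpha^{\tau,\omb}$ one has $\alpha_{\tau(\omb)+u} = \alpha^{\tau(\omb),\omb}_u(Q_1^{-\tau(\omb)},Q_2^{-\tau(\omb)})$ $\P_\omb$--a.s. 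Since $(Q_1^{-\tau(\omb)},Q_2^{-\tau(\omb)})$ has the same law under $\P_\omb$ as $(Q_1,Q_2)$ under $\P$, it follows that
\[
\E\!\left[f(\X{x,\alpha}_{\tau+u},\alpha_{\tau+u})\mid\Fc_\tau\right]\!(\omb) = \E\!\left[f\!\left(\X{\X{x,\alpha}_\tau(\omb),\alpha^{\tau(\omb),\omb}}_u,\alpha^{\tau(\omb),\omb}_u\right)\right].
\]

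Plugging this back and applying Fubini--Tonelli once more to interchange the $du$-integration with the outer $\P(d\omb)$-integration, the inner integral over $u$ is recognized as $J_\beta(\X{x,\alpha}_\tau(\omb),\alpha^{\tau(\omb),\omb})$, so the tail term equals $\E[\e{\beta\tau}\, J_\beta(\X{x,\alpha}_\tau,\alpha^{\tau})]$ in the sense defined in the statement. Adding the pre-$\tau$ piece yields the claimed identity. The main obstacle is precisely this controlled version of Lemma~\ref{lem:cond}: the joint measurability of $\omb\mapsto\alpha^{\tau(\omb),\omb}$ and the distributional identity of the shifted Poisson random measures under the regular conditional probability are both implicit in the proof of Lemma~\ref{lem:cond}, but they must be re-extracted here in order to handle a test function that depends on the control as well as on the state.
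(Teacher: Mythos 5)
Your proposal is correct and follows essentially the same route as the paper: the paper's proof simply invokes Lemma~\ref{lem:cond} to identify $\E[\int_\tau^{+\infty}\e{\beta s}f(\X{x,\alpha}_s,\alpha_s)\,ds\mid\Fc_\tau]$ with $\e{\beta\tau}J_\beta(\X{x,\alpha}_\tau,\alpha^{\tau})$ and then conditions in the cost functional. Your observation that this requires a slight strengthening of the lemma as stated (a functional of the whole path and the control, not just $\varphi(\X{x,\alpha}_s)$), extracted from the indistinguishability statements in its proof, is exactly the point the paper glosses over, and your way of filling it is the right one.
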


\begin{proof}
 Lemma~\ref{lem:cond} yields
 \begin{equation*}
  \E\left[\int_\tau^{+\infty} {\e{\beta s} f\left(\X{x,\alpha}_s,\alpha_s\right) ds} \, \Big| \,  \Fc_{\tau}\right](\om) =
  \e{\beta\tau(\om)} J_\beta\left(\X{x,\alpha}_{\tau}(\om),\alpha^{\tau(\om),\om}\right),\quad \P(d\om)-\as
 \end{equation*}
 The conclusion follows from conditioning by $\Fc_\tau$ in the cost function.
\end{proof}

We are now in a position to give the DPP, which is the keytone of our analysis of this optimal stochastic control problem.

\begin{theorem}\label{thm:DPP}
 Under Hypotheses~\ref{hyp:population}, \ref{hyp:cost}~(i) and \ref{hyp:bounded-inf}, it holds for all $x\in\ZZ_+$ and $\tau$ bounded stopping time,
 \begin{equation*}
  v_{\beta}(x) = \inf_{\alpha\in\Ac} \E\left[\int_0^{\tau} {\e{\beta s} f\left(\X{x,\alpha}_s,\alpha_s\right) ds} + \e{\beta\tau} v_\beta\left(\X{x,\alpha}_{\tau}\right)\right].
 \end{equation*}
\end{theorem}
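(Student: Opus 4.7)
The plan is to prove the two inequalities of the dynamic programming principle separately, relying primarily on the pseudo-Markov decomposition of the cost provided by Proposition~\ref{prop:costcond}.

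For the $\geq$ direction, I would fix any $\alpha\in\Ac$ and apply Proposition~\ref{prop:costcond} to write
$$J_{\beta}(x,\alpha) = \E\left[\int_0^{\tau}\e{\beta s}f(\X{x,\alpha}_s,\alpha_s)\,ds + \e{\beta\tau}J_\beta(\X{x,\alpha}_\tau,\alpha^\tau)\right].$$
Since $J_\beta(y,\alpha')\geq v_\beta(y)$ for every $y\in\ZZ_+$ and every admissible $\alpha'$, including the shifted control $\alpha^{\tau(\om),\om}\in\Ac$, one obtains the analogous inequality with $v_\beta(\X{x,\alpha}_\tau)$ in place of $J_\beta(\X{x,\alpha}_\tau,\alpha^\tau)$ (on the $\P$-null set $\{\X{x,\alpha}_\tau=\infty\}$ one sets $v_\beta(\infty):=0$). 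Taking the infimum over $\alpha\in\Ac$ then yields the $\geq$ inequality.

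For the reverse inequality, I would proceed by $\epsilon$-optimal concatenation. Fix $\epsilon>0$; by Hypothesis~\ref{hyp:bounded-inf} and the definition of $v_\beta$, for each $y\in\ZZ_+$ there exists $\alpha^{y,\epsilon}\in\Ac$ with $J_\beta(y,\alpha^{y,\epsilon})\leq v_\beta(y)+\epsilon$. Given $\alpha\in\Ac$, construct a new control $\tilde\alpha$ that coincides with $\alpha$ on $[0,\tau]$ and, after $\tau$, applies $\alpha^{\X{x,\alpha}_\tau,\epsilon}$ to the time-shifted Poisson noise:
$$\tilde\alpha_s(\om):=\alpha_s(\om)\,\mathds{1}_{s\leq\tau(\om)}+\sum_{y\in\ZZ_+}\alpha^{y,\epsilon}_{s-\tau(\om)}\bigl(Q_1^{-\tau(\om)}(\om),Q_2^{-\tau(\om)}(\om)\bigr)\,\mathds{1}_{\{\X{x,\alpha}_\tau(\om)=y\}}\,\mathds{1}_{s>\tau(\om)},$$
defined with an arbitrary fixed value in $A$ on the null event $\{\X{x,\alpha}_\tau=\infty\}$. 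Since $\X{x,\alpha}_\tau$ is $\Fc_\tau$-measurable and takes at most countably many values, $\tilde\alpha$ is a countable sum of predictable processes, hence admissible.

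Applying Proposition~\ref{prop:costcond} to $\tilde\alpha$, pathwise uniqueness of~\eqref{eq:eds} yields $\X{x,\tilde\alpha}_s=\X{x,\alpha}_s$ for $s\leq\tau$, and an inspection of the shift operation from Lemma~\ref{lem:cond} combined with the definition of $\tilde\alpha$ shows that $\tilde\alpha^{\tau(\om),\om}$ is $\P$-indistinguishable from $\alpha^{\X{x,\alpha}_\tau(\om),\epsilon}$ for $\P$-a.e.\ $\om$. Therefore
$$J_\beta(x,\tilde\alpha)\leq\E\left[\int_0^{\tau}\e{\beta s}f(\X{x,\alpha}_s,\alpha_s)\,ds+\e{\beta\tau}\bigl(v_\beta(\X{x,\alpha}_\tau)+\epsilon\bigr)\right],$$
and $v_\beta(x)\leq J_\beta(x,\tilde\alpha)$. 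Taking the infimum over $\alpha\in\Ac$ and then letting $\epsilon\downarrow 0$ gives the $\leq$ inequality and concludes. The main obstacle is verifying that $\tilde\alpha$ is predictable and that its shift indeed reproduces $\alpha^{\X{x,\alpha}_\tau,\epsilon}$ on the appropriately translated noise; the countability of $\ZZ_+$ is crucial here, as it sidesteps the measurable selection theorems needed in a continuous-state setting and reduces the concatenation to a countable partition measurable with respect to $\Fc_\tau$.
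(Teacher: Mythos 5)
Your $\geq$ direction is correct and identical in spirit to the paper's: apply Proposition~\ref{prop:costcond} and minorize $J_\beta(\X{x,\alpha}_\tau,\alpha^\tau)$ by $v_\beta(\X{x,\alpha}_\tau)$.

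For the $\leq$ direction, the concatenated control $\tilde\alpha$ you write down is exactly the one the paper constructs, but there is a gap in your justification that it is admissible. You assert that ``since $\X{x,\alpha}_\tau$ is $\Fc_\tau$-measurable and takes at most countably many values, $\tilde\alpha$ is a countable sum of predictable processes, hence admissible.'' The countability of $\ZZ_+$ does indeed let you avoid measurable selection when choosing the $\epsilon$-optimal controls $\alpha^{y,\epsilon}$, and it handles the sum over $y$; but it says nothing about the predictability of each individual summand
\[
(s,\om)\;\longmapsto\;\alpha^{y,\epsilon}_{s-\tau(\om)}\bigl(Q_1^{-\tau(\om)}(\om),Q_2^{-\tau(\om)}(\om)\bigr)\,\mathds{1}_{s>\tau(\om)},
\]
which involves a \emph{time}-shift by the random quantity $\tau(\om)$. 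For a general bounded stopping time $\tau$ this is genuinely delicate: one must unwind $\alpha^{y,\epsilon}$ on the canonical space via Proposition~\ref{prop:contcaract} and run a monotone class argument on generators $\mathds{1}_{(t_1,t_2]\times F^\circ}$, checking that $\restriction{Q_i^{-\tau}}{[0,t_1]}$ is $\Fc_{\tau+t_1}$-measurable and that $\mathds{1}_{\tau+t_1<s\le\tau+t_2}$ is a difference of stochastic intervals. None of this is addressed by ``countable sum.'' This is precisely why the paper first restricts to \emph{simple} stopping times, for which the time-shift takes only countably many deterministic values $t_i$ and every term $\alpha^{y}_{s-t_i}(Q_1^{-t_i},Q_2^{-t_i})\mathds{1}_{\tau=t_i}\mathds{1}_{s>t_i}$ is obviously predictable, and then extends to a general bounded $\tau$ by approximating from above with the dyadic simple stopping times $\tau_n\downarrow\tau$ and applying dominated convergence (using the boundedness of $v_\beta$ from Hypothesis~\ref{hyp:bounded-inf}). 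Your route can be made rigorous, but as written it conflates the role of countability of the state space with the distinct issue of the continuous nature of $\tau$, and omits the argument that actually closes the proof.
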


\begin{proof}
 In view of Proposition~\ref{prop:costcond}, it is clear that
 \begin{equation*}
  J_{\beta}\left(x,\alpha\right) \geq \E\left[\int_0^{\tau} {\e{\beta s} f\left(\X{x,\alpha}_s,\alpha_s\right) ds} + \e{\beta\tau} v_\beta\left(\X{x,\alpha}_{\tau}\right)\right].
 \end{equation*}
 Hence, we conclude that
 \begin{equation*}
  v_{\beta}(x) \geq \inf_{\alpha\in\Ac} \E\left[\int_0^{\tau} {\e{\beta s} f\left(\X{x,\alpha}_s,\alpha_s\right) ds} + \e{\beta\tau} v_\beta\left(\X{x,\alpha}_{\tau}\right)\right].
 \end{equation*}
 The reverse inequality is more difficult to derive. The idea is to construct a convenient control by concatenation of an arbitrary control on $[0,\tau]$ and a well-chosen control on $[\tau,\infty)$. We start by working with a simple stopping time, \textit{i.e.}, taking values in a countable subset on $\R_+$. Given $\varepsilon>0$, let $\alpha^y$ be an $\varepsilon$--optimal control for the discount factor $\beta$ and the initial state $y\in\ZZ_+$, \textit{i.e.},
 \begin{equation*}
  v_\beta(y) \geq J_\beta\left(y,\alpha^y\right) - \varepsilon.
 \end{equation*}
 Let also $\alpha$ be an arbitrary admissible control. Define the control $\tilde{\alpha}$ by
 \begin{equation*}
  \tilde{\alpha}_s(\om) = 
    \begin{cases}
     \alpha_s(\om), & \text{if } 0\leq s\leq \tau(\om), \\
     \alpha^{\X{x,\alpha}_{\tau}(\om)}_{s-t}(Q_1^{-\tau(\om)}(\om),Q_2^{-\tau(\om)}(\om)), & \text{if } s>\tau(\om),
    \end{cases}
 \end{equation*}
 where, for $\mu\in\Md$, $\mu^{-t}$ denotes the image measure of $\restriction{\mu}{[t,\infty)}$ by the map
 $(s,z)\in[t,+\infty)\x\R_+\mapsto (s-t,z)\in\R_+\x\R_+$. Since $\tau$ is a simple stopping time, it is clear that the process $\tilde{\alpha}$ is predictable and thus an admissible control.
 By definition, $\tilde{\alpha}^{\tau(\om),\om}$ coincides with $\alpha^{\X{x,\alpha}_{\tau}(\om)}$ and 
 we have
 \begin{align*}
  \E\left[\e{\beta\tau} v_\beta\left(\X{x,\alpha}_\tau\right)\right] 
    & \geq \E\left[\e{\beta\tau} J_\beta\left(\X{x,\alpha}_\tau,\alpha^{\X{x,\alpha}_\tau}\right)\right] - \varepsilon \E\left[\e{\beta\tau}\right] \\
    & \geq \E\left[\e{\beta\tau} J_\beta\left(\X{x,\alpha}_\tau,\tilde{\alpha}^{\tau}\right)\right] - \varepsilon \e{\beta\lVert\tau\rVert_{\infty}}.
 \end{align*}
 Then it follows from Proposition~\ref{prop:costcond} that
 \begin{align*}
  \E\left[\int_0^{\tau} {\e{\beta s} f\left(\X{x,\alpha}_s,\alpha_s\right) ds} + \e{\beta\tau} v_\beta\left(\X{x,\alpha}_{\tau}\right)\right] 
    & \geq J_\beta\left(x,\tilde{\alpha}\right) - \varepsilon \e{\beta\lVert\tau\rVert_{\infty}} \\
    & \geq v_\beta(x) - \varepsilon \e{\beta\lVert\tau\rVert_{\infty}}.
 \end{align*}
 Sending $\varepsilon$ to zero and taking the infimum over $\alpha\in\Ac$, we conclude that
 \begin{equation*}
  v_{\beta}(x) \leq \inf_{\alpha\in\Ac} \E\left[\int_0^{\tau} {\e{\beta s} f\left(\X{x,\alpha}_s,\alpha_s\right) ds} + \e{\beta\tau} v_\beta\left(\X{x,\alpha}_{\tau}\right)\right].
 \end{equation*} 
 To conclude, it remains to derive the same result for any bounded stopping time. To this end, we consider $(\tau_n)_{n\in\N}$ a decreasing sequence of simple stopping times converging to $\tau$, \textit{e.g.},
 \begin{equation*}
  \tau_n = \sum_{i\geq 0} \frac{i+1}{2^n} \mathds{1}_{\frac{i}{2^n}<\tau\leq \frac{i+1}{2^n}}.
 \end{equation*} 
 In view of the above, we have for all $n\in\N$,
 \begin{equation*}
  \E\left[\int_0^{\tau_n} {\e{\beta s} f\left(\X{x,\alpha}_s,\alpha_s\right) ds} + \e{\beta\tau_n} v_\beta\left(\X{x,\alpha}_{\tau_n}\right)\right] 
     \geq v_\beta(x) - \varepsilon \e{\beta\lVert\tau_n\rVert_{\infty}}.
 \end{equation*}
 Since $v_\beta$ is bounded by assumption, the conclusion follows by applying the dominated convergence theorem as $n$ tends to infinity.
\end{proof}

From the DPP, we derive in the next theorem the HJB equation corresponding to the minimization problem.

\begin{theorem}
  \label{thm:HJB}
  Under Hypotheses~\ref{hyp:population}, \ref{hyp:cost}~(i) and \ref{hyp:bounded-inf}, it holds 
  \begin{equation}\label{eq:HJB}
   \beta v_{\beta}(x) + \inf_{a\in A} {\left\{\Lc^a {v_\beta} (x) + f(x,a)\right\}} = 0, \quad \forall\,x\in\N.
  \end{equation}
\end{theorem}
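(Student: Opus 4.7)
My plan is to derive~\eqref{eq:HJB} from the dynamic programming principle (Theorem~\ref{thm:DPP}) applied with a short deterministic horizon $\tau=h>0$, combined with a first-order Taylor expansion as $h\downarrow 0$. The two inequalities composing~\eqref{eq:HJB} are obtained separately, one by testing constant admissible controls in the DPP and the other by testing $h^2$-near-optimal ones.

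For the inequality $\beta v_\beta(x)+\inf_{a\in A}\{\Lc^a v_\beta(x)+f(x,a)\}\ge 0$, I fix $a\in A$ and apply Theorem~\ref{thm:DPP} with $\tau=h$ and the constant admissible control $\alpha\equiv a$, yielding
\[
v_\beta(x)\;\le\;\E\!\left[\int_0^h e^{\beta s}f(X^{x,a}_s,a)\,ds\right]+\E\!\left[e^{\beta h}v_\beta(X^{x,a}_h)\right].
\]
The first jump time $\sigma$ of $X^{x,a}$ is exponential of rate $r(x,a)=b_x(a)+d_x(a)$, independent of the jump direction by construction from $Q_1,Q_2$, and $X^{x,a}_s=x$ on $[0,\sigma)$. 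Explicit computation of $\E[e^{\beta\sigma}\mathbf{1}_{\sigma\le h}]$, $\E[v_\beta(X^{x,a}_\sigma)]$ and $e^{\beta h}\P(\sigma>h)$ yields $\E[e^{\beta h}v_\beta(X^{x,a}_h)]=v_\beta(x)+h[\beta v_\beta(x)+\Lc^a v_\beta(x)]+O(h^2)$, and a similar computation gives $hf(x,a)+O(h^2)$ for the integral term. Dividing by $h$ and letting $h\downarrow 0$ produces $0\le f(x,a)+\beta v_\beta(x)+\Lc^a v_\beta(x)$; taking the infimum over $a$ concludes this direction.

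For the converse inequality, for each $h>0$ I pick $\alpha^h\in\Ac$ with $\E[\int_0^h e^{\beta s}f(X^{x,\alpha^h}_s,\alpha^h_s)\,ds+e^{\beta h}v_\beta(X^{x,\alpha^h}_h)]\le v_\beta(x)+h^2$, which exists by Theorem~\ref{thm:DPP}. The same first-jump-type expansion—now using that the jump rate of $X^{x,\alpha}$ at state $x$ is bounded by $\bmax x+\dmax_x$ uniformly in $\alpha$, so that $\P(X^{x,\alpha}_s\ne x)\le(\bmax x+\dmax_x)s$ uniformly in $\alpha$—yields
\[
v_\beta(x)+h^2\;\ge\;v_\beta(x)+h\beta v_\beta(x)+\E\!\left[\int_0^h\bigl\{f(x,\alpha^h_s)+\Lc^{\alpha^h_s}v_\beta(x)\bigr\}\,ds\right]+o(h).
\]
Combining with the pathwise bound $f(x,\alpha^h_s)+\Lc^{\alpha^h_s}v_\beta(x)\ge\inf_{a\in A}\{f(x,a)+\Lc^a v_\beta(x)\}$, dividing by $h$, and sending $h\downarrow 0$ (with the $h^2$ contribution vanishing) produces the reverse inequality.

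The main technical obstacle is to justify that the $o(h)$ error in the Dynkin-type expansion is uniform over all admissible controls—indispensable for applying it to the sequence $(\alpha^h)$ that depends on $h$. The decisive feature is that on the high-probability event $\{\sigma>h\}$, of probability $1-O(h)$ uniformly in $\alpha$, the process sits at $x$ and the integrand $\Lc^{\alpha_s}v_\beta(X^{x,\alpha}_s)=\Lc^{\alpha_s}v_\beta(x)$ is bounded by $2\|v_\beta\|_\infty(\bmax x+\dmax_x)$; the remaining contribution from the complementary event is controlled via the pseudo-Markov property of Lemma~\ref{lem:cond} at the first jump time combined with the moment estimate~\eqref{eq:moment} of Proposition~\ref{prop:welldefined} applied to the process restarted from $X^{x,\alpha}_\sigma$.
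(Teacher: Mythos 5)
Your architecture (DPP at a short horizon, Dynkin expansion, one inequality from constant controls and the other from near-optimal ones) is the standard one, but it differs from the paper's, which is engineered precisely to avoid the uniformity issue you flag. The paper applies the DPP not at the deterministic time $h$ but at the stopping time $\tau_h$ equal to $h$ capped by the first atom of $Q_1$ in $[0,\cdot]\times[0,\bmax x]$ or of $Q_2$ in $[0,\cdot]\times[0,\dmax_x]$. This time is independent of the control, and before it the process cannot have jumped whatever $\alpha$ is, so the Dynkin formula up to $\tau_h$ is \emph{exact} and only involves $\Lc^{\alpha_\theta}v_\beta(x)$ at the fixed state $x$. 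The DPP then gives
$\inf_{\alpha\in\Ac}\E\bigl[\int_0^{\tau_h}e^{\beta\theta}\bigl(\beta v_\beta(x)+\Lc^{\alpha_\theta}v_\beta(x)+f(x,\alpha_\theta)\bigr)d\theta\bigr]=0$,
and since the integrand is pointwise bounded below by its infimum over constant $a\in A$ while constant controls are admissible, the infimum factorizes exactly as $\bigl(\beta v_\beta(x)+\inf_a\{\Lc^a v_\beta(x)+f(x,a)\}\bigr)\E\bigl[\frac1h\int_0^{\tau_h}e^{\beta\theta}d\theta\bigr]$, with no remainder and no need for $h^2$-optimal controls.

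In your version the crux is indeed the uniformity in $\alpha$ of the $o(h)$, and the resolution you sketch does not work as stated. The dangerous term is the contribution after the first jump: conditionally on $\sigma\le h$ the process sits at $X^{x,\alpha}_\sigma=x+k$ (or $x-1$), and the probability that it jumps again before $h$ is governed by $\bmax(x+k)+\dmax_{x+k}$. Hypothesis~\ref{hyp:population}~(iii) puts no growth restriction on $\dmax_n$, so the moment estimate~\eqref{eq:moment}, which controls only $\E[\sup_\theta X_\theta]$, cannot bound this; one can have $\E[\dmax_{X_\sigma}]=+\infty$ even though $\E[X_\sigma]<\infty$. The correct repair uses Hypothesis~\ref{hyp:population}~(iv): since $\sum_{k>K}p_{x,k}(a)\le M/K$, one gets $\P(\sigma\le h,\,X^{x,\alpha}_\sigma>x+K)\le \bmax x M h/K$ uniformly in $\alpha$, while on $\{X^{x,\alpha}_\sigma\le x+K\}$ the post-jump rates are bounded by a constant $C_K$ and the second-jump probability is $O_K(h^2)$; letting $h\to0$ and then $K\to\infty$ yields $\sup_{\alpha\in\Ac}\P(\text{at least two jumps in }[0,h])=o(h)$, which is what your expansion actually requires. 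With that substitution your proof goes through; the paper's choice of $\tau_h$ simply sidesteps the issue.
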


This result can be deduced from the DPP by classical arguments. We give its proof for the sake of completeness.

\begin{proof}
 Let $x\in\N$ be fixed. Denote by $\tau_h$ the stopping time given by
 \begin{equation*}
  \tau_h := \inf{\left\{ s\geq 0;\ Q_1([0,s]\x[0,\bmax x]) + Q_2([0,s]\x[0,\dmax_x]) \neq 0  \right\}}\wedge h,
 \end{equation*}
 for some $h>0$. 
 First, we notice that
 \begin{multline*}
  \e{\beta \tau_h} v_\beta\left(\X{x,\alpha}_{\tau_h}\right) = v_{\beta}(x) + \int_0^{\tau_h} {\beta \e{\beta\theta} v_\beta(x) \,d\theta} \\
  \begin{aligned}
   & + \int_{[0,\tau_h]\x\R_+} {\sum_{k\geq 1} {\left(\e{\beta\theta} v_\beta\left(x+k\right) - \e{\beta\theta} v_\beta\left(x\right)\right) \mathds{1}_{I_k\left(x,\alpha_{\theta}\right)}(z) } \,Q_1(d\theta, dz)} \\
   & + \int_{[0,\tau_h]\x\R_+} {\left( \e{\beta\theta} v_\beta\left(x-1\right) - \e{\beta\theta} v_\beta\left(x\right)\right) \mathds{1}_{[0,d\left(x,\alpha_{\theta}\right)]}(z) \,Q_2(d\theta, dz)}.  
  \end{aligned}
 \end{multline*}
 Taking expectation, it follows from the boundedness of $v_\beta$ that
 \begin{equation*}
  \E\left[\e{\beta \tau_h} v_\beta\left(\X{x,\alpha}_{\tau_h}\right)\right] = v_{\beta}(x)  + \E\left[\int_0^{\tau_h} {\e{\beta\theta}\left(\beta v_\beta(x) + \Lc^{\alpha_\theta} {v_\beta}(x)\right) d\theta}\right].  
 \end{equation*}
 Using the DPP, we deduce that
 \begin{equation*}
  \inf_{\alpha\in\Ac} {\left\{\E\left[\int_0^{\tau_h} {\e{\beta\theta} \left(\beta v_\beta(x) +  \Lc^{\alpha_\theta} {v_\beta}(x) + f\left(x,\alpha_{\theta}\right) \right) d\theta}\right]\right\}} = 0.  
 \end{equation*}
 Since constant controls are admissible, the identity above is equivalent to 
 \begin{equation*}
  \left(\beta v_\beta(x) +  \inf_{a\in A} {\left\{\Lc^{a} {v_\beta}(x) + f\left(x,a\right)\right\}}\right) \E\left[\frac{1}{h}\int_0^{\tau_h} {\e{\beta\theta} \,d\theta}\right] = 0.
 \end{equation*}
 Sending $h$ to zero, we conclude by the dominated convergence theorem.
\end{proof}

 \begin{theorem}
   \label{thm:alpha-opt-Markov-sup}
  Under Hypotheses~\ref{hyp:population}, \ref{hyp:cost}~(i), \ref{hyp:main} and \ref{hyp:bounded-inf}, there exists $\alpha_{\beta}\in\Ac_M$ such that
  \begin{equation*}
   \inf_{a\in A} {\left\{\Lc^a {v_\beta} (x) + f(x,a)\right\}} = \Lc^{\alpha_\beta(x)} {v_\beta} (x) + f(x,\alpha_{\beta}(x)).
  \end{equation*}
  In addition, the Markov control $\alpha_{\beta}$ is optimal for the $\beta$--discounted minization problem on infinite horizon, that is, $v_\beta
  = J_\beta(\cdot,\alpha_{\beta})$. 
 \end{theorem}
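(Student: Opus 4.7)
The argument splits into three phases: construction of a measurable selector achieving the infimum in the HJB equation, verification via Itô's formula, and a passage to infinity that exploits the non-negativity of $v_\beta$.

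\emph{Selector.} For each fixed $x\in\N$, I would first verify that the map $a\mapsto \Lc^a v_\beta(x)+f(x,a)$ is continuous on $A$. The function $f(x,\cdot)$ and the factors $b_x,d_x$ are continuous by Hypothesis~\ref{hyp:main}(ii). Each term in the series $\sum_{k\geq 1}(v_\beta(x+k)-v_\beta(x))p_{x,k}(a)$ is pointwise continuous in $a$ and bounded in absolute value by $2\|v_\beta\|_\infty\, p_{x,k}(a)$, with $\sum_k p_{x,k}(a)=1$; dominated convergence then gives continuity of the full series. Since $A$ is compact by Hypothesis~\ref{hyp:main}(i), the infimum is attained, and one chooses $\alpha_\beta(x)\in\argmin_{a\in A}\{\Lc^a v_\beta(x)+f(x,a)\}$. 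Measurability of $x\mapsto\alpha_\beta(x)$ is automatic because $\ZZ_+$ is discrete. Combining this pointwise attainment with the HJB equation from Theorem~\ref{thm:HJB} gives
\begin{equation*}
    \beta v_\beta(x)+\Lc^{\alpha_\beta(x)}v_\beta(x)+f(x,\alpha_\beta(x))=0,\quad\forall\,x\in\N.
\end{equation*}

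\emph{Verification.} Write $X:=X^{x,\alpha_\beta}$ and $\tau_n:=\inf\{s\geq 0:X_s\geq n\}$. I would apply Itô's formula for jump processes (based on the SDE~\eqref{eq:edsm}) to $e^{\beta s}v_\beta(X_s)$ and use the pointwise HJB identity above to simplify the drift to $-e^{\beta s}f^{\alpha_\beta}(X_s)$, obtaining
\begin{equation*}
    e^{\beta(t\wedge\tau_n)}v_\beta(X_{t\wedge\tau_n})=v_\beta(x)-\int_0^{t\wedge\tau_n}e^{\beta s}f^{\alpha_\beta}(X_s)\,ds+M_{t\wedge\tau_n},
\end{equation*}
where $M$ is a local martingale. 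On the stochastic interval $[0,\tau_n]$ the process $X$ is bounded, so the stopped jump measures are integrable and $M_{\cdot\wedge\tau_n}$ is a true martingale. Taking expectations and letting $n\to\infty$, using $\tau_n\to\infty$ a.s.\ from Proposition~\ref{prop:welldefined}, boundedness of $v_\beta$ (Hypothesis~\ref{hyp:bounded-inf}) on the left-hand side via dominated convergence, and non-negativity of $f$ on the right-hand side via monotone convergence, yields
\begin{equation*}
    \E\bigl[e^{\beta t}v_\beta(X_t)\bigr]=v_\beta(x)-\E\left[\int_0^t e^{\beta s}f^{\alpha_\beta}(X_s)\,ds\right].
\end{equation*}

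\emph{Conclusion.} Since $f\geq 0$ forces $v_\beta\geq 0$, the left-hand side above is non-negative, so $\E[\int_0^t e^{\beta s}f^{\alpha_\beta}(X_s)\,ds]\leq v_\beta(x)$ for every $t\geq 0$. Monotone convergence as $t\to\infty$ gives $J_\beta(x,\alpha_\beta)\leq v_\beta(x)$, while the reverse inequality is the definition of $v_\beta$ as an infimum, hence $v_\beta=J_\beta(\cdot,\alpha_\beta)$. The main subtlety is the localization step combined with the observation that one does not need to prove $\E[e^{\beta t}v_\beta(X_t)]\to 0$ as $t\to\infty$; such a decay would require QSD-type estimates and a constraint like $\beta<\lambda^{\alpha_\beta}$, whereas the non-negativity of $v_\beta$ suffices to close the argument uniformly in the sign of $\beta$ under the sole assumption that $v_\beta$ is bounded.
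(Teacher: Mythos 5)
Your proposal is correct and follows essentially the same route as the paper: continuity of $a\mapsto\Lc^a v_\beta(x)+f(x,a)$ plus compactness of $A$ yields the minimizing Markov control, and the verification step combines the pointwise HJB identity with the non-negativity of $v_\beta$ to drop the terminal term before letting $T\to\infty$ by monotone convergence. The only imprecision is in the continuity of the series: your dominating sequence $2\|v_\beta\|_\infty\,p_{x,k}(a)$ depends on $a$, so classical dominated convergence does not apply verbatim --- either invoke Scheff\'e's lemma (using that $\sum_k p_{x,k}(a)\equiv 1$ with each $p_{x,k}$ continuous) or, as the paper does, bound the tails uniformly in $a$ via Hypothesis~\ref{hyp:population}~(iv), which gives $\sum_{k\geq K+1}p_{x,k}(a)\leq M/(K+1)$.
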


\begin{proof}
 Let us show first that the map $a\mapsto \Lc^a {v_\beta} (x) + f(x,a)$  is continuous on $A$ for all $x\in\ZZ_+$. In view of the
 continuity of $\gamma(x,\cdot)$ , $f(x,\cdot)$, $p_k(x,\cdot)$ and $d(x,\cdot)$, 
 it is clearly enough to prove that
 \begin{equation*}
  \lim_{K\to\infty} {\sup_{a\in A} {\left\{\sum_{k\geq K+1} {v_\beta\left(x+k\right) p_k\left(x,a\right)}\right\}}} = 0.
 \end{equation*} 
 The former follows from 
 \begin{align*}
  \sum_{k\geq K+1} {v_\beta\left(x+k\right) p_k\left(x,a\right)} \leq \lVert v_\beta\rVert_{\infty} \sum_{k\geq K+1} {p_k\left(x,a\right)} \leq \frac{M \lVert v_\beta\rVert_{\infty}}{K+1},
 \end{align*}
 where the constant $M$ comes from Hypothesis~\ref{hyp:population}~(iv).
 Since $A$ is compact, we deduce that there exists
 $\alpha_{\beta}\in\Ac_M$ such that
 \begin{equation*}
  \inf_{a\in A} {\left\{\Lc^a {v_\beta} (x) + f(x,a)\right\}} = \Lc^{\alpha_\beta(x)} {v_\beta} (x) + f(x,\alpha_{\beta}(x)).
 \end{equation*}
 The rest of the proof relies on a classical verification theorem. 
 Using the boundedness of $v_\beta$, we derive as in the proof of Theorem~\ref{thm:HJB} that
 \begin{equation*} 
  \E\left[\e{\beta T} v_\beta\left(\X{x,\alpha_{\beta}}_T\right)\right] = v_\beta(x)
   + \E\left[\int_0^T { \e{\beta \theta} \left(\beta v_\beta\left(\X{x,\alpha_{\beta}}_\theta\right) + \Lc^{\alpha_{\beta}} v_\beta\left(\X{x,\alpha_{\beta}}_\theta\right)\right) d\theta }\right].
 \end{equation*}
 Since $v_\beta$ is a solution of the HJB equation, we deduce that
 \begin{equation*}
  \E\left[\int_0^T {\e{\beta\theta} f^{\alpha_\beta}\left(\X{x,\alpha_{\beta}}_\theta\right)} \,d\theta + \e{\beta T} v_\beta\left(\X{x,\alpha_\beta}_T\right)\right] = v_\beta(x).
 \end{equation*}
 In particular, we have 
 \begin{equation*}
  \E\left[\int_0^T {\e{\beta\theta} f^{\alpha_\beta}\left(\X{x,\alpha_{\beta}}_\theta\right)} \,d\theta\right] \leq v_\beta(x).
 \end{equation*}
 Sending $T$ to infinity, it follows from the monotone convergence theorem that
 \begin{equation*}
  \E\left[\int_0^{+\infty} {\e{\beta\theta} f^{\alpha_\beta}\left(\X{x,\alpha_\beta}_\theta\right) d\theta}\right] \leq v_\beta(x).
 \end{equation*}
 We conclude that $\alpha_\beta$ is an optimal control.
\end{proof}

\begin{remark}
 Our analysis can be extended to deal with the so-called continuous-time Markov decision processes (CTMDP) (see, \eg,~\cite{guo-hernandez-09,piunovskiy-zhang-13}). As such, this section presents a new methodology to derive the HJB equation and the existence of an optimal Markov control. Even though they can be weakened, our assumptions are rather strong compared to the literature on the CTMDP. The benefit of this approach is to unify the treatment of stochastic control problems for jump processes and diffusion processes. Indeed, by a simple analogy which consists in giving to the Brownian motion the role played by the Poisson random measures, we can follow the same arguments to derive the DPP corresponding to a stochastic control problem on diffusion processes. See~\cite{claisse-talay-tan-16} for more details.\\
\end{remark}

 \subsection{Maximisation problem}
 
  In this section, we derive the corresponding results for the maximization problem. 
  They are stated under the assumption that the value function $w_\beta$ is bounded, which is proved for $\beta<\lambda_*$ in Lemma~\ref{lem:bounded} below. Note that this is a delicate issue in this setting.
  
 \begin{hypothesis}\label{hyp:bounded-sup}
  The value function $w_\beta$ is bounded on $\ZZ_+$.
 \end{hypothesis}

  \begin{theorem}
   \label{thm:alpha-opt-Markov}
  Under Hypotheses~\ref{hyp:population}, \ref{hyp:cost}~(i), \ref{hyp:main} and \ref{hyp:bounded-sup},
  it holds
  \begin{equation}\label{eq:HJB_sup}
   \beta w_{\beta}(x) + \sup_{a\in A} {\left\{\Lc^a {w_\beta} (x) + f(x,a)\right\}} = 0, \quad \forall\,x\in\N,
  \end{equation}
  and there exists $\alpha_{\beta}\in\Ac_M$ such that
  \begin{equation*}
   \sup_{a\in A} {\left\{\Lc^a {w_\beta} (x) + f(x,a)\right\}} = \Lc^{\alpha_\beta(x)} {w_\beta} (x) + f(x,\alpha_{\beta}(x)).
  \end{equation*}
  If we assume further that
  \begin{equation}\label{eq:verification}
   \lim_{T\to+\infty} {e^{\beta T} \E_x\left[w_\beta\left(\X{\alpha_\beta}_T\right)\right]} = 0, \quad \forall\,x\in\N,
  \end{equation}  
  then the Markov control $\alpha_{\beta}$ is optimal for the $\beta$--discounted maximization problem on infinite horizon, that is, $w_\beta
  = J_\beta(\cdot,\alpha_{\beta})$. 
 \end{theorem}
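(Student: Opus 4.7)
The plan is to mirror the three-step strategy of Theorems~\ref{thm:DPP}, \ref{thm:HJB} and \ref{thm:alpha-opt-Markov-sup}, replacing infima by suprema, with the last (verification) step being where the new hypothesis~\eqref{eq:verification} genuinely enters.

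First, I would establish the DPP for $w_\beta$: for every bounded stopping time $\tau$ and $x\in\ZZ_+$,
\[
  w_\beta(x) = \sup_{\alpha\in\Ac} \E\left[\int_0^\tau \e{\beta s} f(\X{x,\alpha}_s,\alpha_s)\,ds + \e{\beta\tau}\, w_\beta(\X{x,\alpha}_\tau)\right].
\]
The inequality ``$\leq$'' is immediate from Proposition~\ref{prop:costcond}, since $J_\beta(\X{x,\alpha}_\tau,\alpha^\tau)\leq w_\beta(\X{x,\alpha}_\tau)$. For ``$\geq$'', I repeat the concatenation argument of Theorem~\ref{thm:DPP}: fix $\varepsilon>0$, let $\alpha^y$ be an $\varepsilon$-optimal admissible control starting from $y$, take any $\alpha\in\Ac$ and a simple $\tau$, and define $\tilde\alpha$ as $\alpha$ up to $\tau$ and as the shifted $\alpha^{\X{x,\alpha}_\tau}$ afterwards. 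Proposition~\ref{prop:costcond} gives $J_\beta(x,\tilde\alpha) \geq \E[\int_0^\tau\e{\beta s}f(\X{x,\alpha}_s,\alpha_s)\,ds + \e{\beta\tau}w_\beta(\X{x,\alpha}_\tau)] - \varepsilon\e{\beta\|\tau\|_\infty}$. General bounded $\tau$ is obtained by approximation from above by simple stopping times, using Hypothesis~\ref{hyp:bounded-sup} and dominated convergence.

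Second, applying this DPP with the jump-waiting stopping time $\tau_h$ defined in the proof of Theorem~\ref{thm:HJB}, and expanding $\e{\beta\tau_h}w_\beta(\X{x,\alpha}_{\tau_h})$ by the same compensation identity, yields
\[
  \sup_{\alpha\in\Ac}\E\left[\int_0^{\tau_h}\e{\beta\theta}\bigl(\beta w_\beta(x)+\Lc^{\alpha_\theta}w_\beta(x)+f(x,\alpha_\theta)\bigr)\,d\theta\right]=0,
\]
and since constant controls are admissible, dividing by $h$ and letting $h\to0$ gives~\eqref{eq:HJB_sup}. Existence of the Markov selector $\alpha_\beta$ then follows as in Theorem~\ref{thm:alpha-opt-Markov-sup}: the map $a\mapsto\Lc^a w_\beta(x)+f(x,a)$ is continuous on $A$ because $w_\beta$ is bounded and Hypothesis~\ref{hyp:population}~(iv) gives the uniform tail bound $\sum_{k\geq K+1}w_\beta(x+k)p_{n,k}(a)\leq M\|w_\beta\|_\infty/(K+1)$, so the finite truncations (continuous by Hypothesis~\ref{hyp:main}) converge uniformly in $a$. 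Compactness of $A$ then provides a pointwise maximizer, and a measurable version $\alpha_\beta\in\Ac_M$ is obtained by a standard measurable selection theorem.

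Finally, for the verification, the same Dynkin-type expansion applied along the controlled process $\X{x,\alpha_\beta}$ and using that $\alpha_\beta$ realizes the supremum in~\eqref{eq:HJB_sup} yields the equality
\[
  w_\beta(x) = \E\left[\int_0^T\e{\beta s}f^{\alpha_\beta}(\X{x,\alpha_\beta}_s)\,ds\right] + \e{\beta T}\,\E\left[w_\beta(\X{x,\alpha_\beta}_T)\right].
\]
Letting $T\to\infty$, the terminal term vanishes by assumption~\eqref{eq:verification} and the integral converges to $J_\beta(x,\alpha_\beta)$ by monotone convergence, proving optimality. The principal obstacle is precisely this last step: unlike in the minimization case, where $v_\beta\geq 0$ and $f\geq 0$ let one discard the terminal term by sign, here $\beta$ can be positive and $\e{\beta T}$ does not decay, so one genuinely needs~\eqref{eq:verification} to close the verification. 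Checking that this hypothesis is satisfied in the regime $\beta<\lambda_*$ is the delicate point deferred to Lemma~\ref{lem:bounded} and the surrounding analysis.
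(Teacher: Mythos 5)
Your proposal follows the same three-step route as the paper's proof: establish the DPP by the concatenation argument of Theorem~\ref{thm:DPP}, derive the HJB equation and the Markov selector as in Theorems~\ref{thm:HJB} and~\ref{thm:alpha-opt-Markov-sup}, and close the verification by sending $T\to\infty$ with hypothesis~\eqref{eq:verification} killing the terminal term, which is exactly what the paper does. Your closing remark correctly identifies why~\eqref{eq:verification} is needed here and not in the minimization case: since $w_\beta\geq 0$, the terminal term is non-negative and dropping it only yields the trivial inequality $J_\beta(\cdot,\alpha_\beta)\leq w_\beta$, so one genuinely needs it to vanish; invoking a measurable selection theorem is harmless but unnecessary as the state space $\ZZ_+$ is countable.
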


\begin{proof}
  The proof relies on the arguments developed in the previous section. Indeed, it follows by repeating the argument of Theorem~\ref{thm:DPP} that the corresponding DPP holds, \textit{i.e}, for any bounded stopping time $\tau$,
  \begin{equation*}
  w_{\beta}(x) = \sup_{\alpha\in\Ac} \E\left[\int_0^{\tau} {\e{\beta s} f\left(\X{x,\alpha}_s,\alpha_s\right) ds} + \e{\beta\tau} w_\beta\left(\X{x,\alpha}_{\tau}\right)\right].
 \end{equation*}
 Then we deduce as in Theorem~\ref{thm:HJB} that the value function satisfies the HJB equation~\eqref{eq:HJB_sup}. To conclude, it remains to adapt the arguments of Theorem~\ref{thm:alpha-opt-Markov}. First, the existence of $\alpha_\beta$ follows from the continuity of the map $a\mapsto \Lc^a {w_\beta} (x) + f(x,a)$, which results from the boundedness of the value function as before.
 Second, the proof of the verification theorem needs to be slighty modified.
 Since $w_\beta$ is a solution of the HJB equation, we have 
 \begin{equation*}
  \E\left[\int_0^T {\e{\beta\theta} f^{\alpha_\beta}\left(\X{x,\alpha_{\beta}}_\theta\right)} \,d\theta + \e{\beta T} w_\beta\left(\X{x,\alpha_\beta}_T\right)\right] = w_\beta(x).
 \end{equation*}
 Sending $T$ to infinity and using~\eqref{eq:verification}, it follows from the monotone convergence theorem that
 \begin{equation*}
  \E\left[\int_0^{+\infty} {\e{\beta\theta} f^{\alpha_\beta}\left(\X{x,\alpha_\beta}_\theta\right) d\theta}\right] = w_\beta(x).
 \end{equation*}
 We conclude that $\alpha_\beta$ is an optimal control.
\end{proof}

\section{Quasi-stationary distributions}
\label{sec:QSD}

We fix $\alpha\in\mathcal{A}_M$ in all this section. Our goal is to study the QSD of the Markov process $X^\alpha$, that is the
probability measure $\pi^\alpha$ on $\N$ such that
$$
\P_{\pi^\alpha}(X^{\alpha}_t\in\Gamma\mid t<\tau^{\alpha})=\pi^\alpha(\Gamma),
$$
where $\P_{\pi^\alpha}=\sum_{k\geq 1}\pi^\alpha_k\P_k$, and the associated absorption rate $\lambda^\alpha>0$
defined by
\begin{equation}
  \label{eq:def-lambda}
  \P_{\pi^\alpha}(t<\tau^\alpha)=e^{-\lambda^\alpha t},\quad\forall\,t\geq 0.
\end{equation}
Below, we prove Theorem~\ref{thm:QSD-short-version} and auxiliary estimates needed for Proposition~\ref{thm:QSD} and for the
proof of our main results. In particular, throughout this section, we work under Hypotheses~\ref{hyp:population} and~\ref{hyp:qsd}.

Since the process $X^\alpha$ is Markov, a.s.\ absorbed in finite time in 0, and satisfies
$$
\P_x(t<\tau^\alpha)>0,\quad\forall\,x\geq 1\text{ and }t>0,
$$
we can apply the general criterion of~\cite[Thm.\,2.1]{champagnat-villemonais-15} to prove the existence and uniqueness of a QSD
$\pi^\alpha$ and the existence of constants $C$ and $\gamma>0$ such that, for all probability measure $\mu$ on $\N$,
\begin{equation}
  \label{eq:cv-expo}
  \left\|\P_\mu(X^{\alpha}\in\cdot\mid t<\tau^{\alpha})-\pi^\alpha\right\|_{TV}\leq Ce^{-\gamma t},\quad\forall t\geq 0.
\end{equation}
These three properties are equivalent to the following condition, which also implies several other properties including those of
Proposition~\ref{thm:QSD}.

\paragraph{Condition~(A)}
There exists a probability measure $\nu$ on $\N$ such that
\begin{itemize}
\item[(A1)] there exists $t_0,c_1>0$ such that for all $x\in\N$,
  $$
  \P_x(X^\alpha_{t_0}\in\cdot\mid t_0<\tau^\alpha)\geq c_1\nu(\cdot);
  $$
\item[(A2)] there exists $c_2>0$ such that for all $x\in\N$ and $t\geq 0$,
  $$
  \P_\nu(t<\tau^\alpha)\geq c_2\P_x(t<\tau^\alpha).
  $$
\end{itemize}

\begin{proof}[Proof of Theorem~\ref{thm:QSD-short-version} and Proposition~\ref{thm:QSD}]
  The next lemma, proved at the end of this section, allows to check Conditions~(A1) and~(A2) with $\nu=\delta_1$.
  
  \begin{lemma}
    \label{lem:hypcv}
    For all $\alpha\in\Ac_M$, one has:\\
    \rmi there exists $t_0,c_1>0$ such that for all $x\in\N$,
    \begin{equation*}
      \P_x\left(\X{\alpha}_{t_0}=1 \mid t_0<\tau^{\alpha}\right) \geq c_1;
    \end{equation*}
    \rmii there exists $c_2>0$ such that for all $x\in\N$ and $t\geq0$,
    \begin{equation*}
      \P_{1}\left(t<\tau^{\alpha}\right) \geq c_2\P_{x}\left(t<\tau^{\alpha}\right).
    \end{equation*}
    In addition, the constants $t_0$, $c_1$ and $c_2$ do not depend on $\alpha\in\mathcal{A}_M$.
  \end{lemma}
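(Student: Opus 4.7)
For \textbf{part~(i)}, the argument proceeds in four steps, combining a Lyapunov moment bound (exploiting Hypothesis~\ref{hyp:qsd}(i)) with the one-step irreducibility of Hypothesis~\ref{hyp:qsd}(ii). First, taking $V(n)=n^\gamma$ for some $\gamma\in(0,1)$, Hypotheses~\ref{hyp:population}(ii), (iv) and~\ref{hyp:qsd}(i) give
\[
\Lc^\alpha V(n) \leq \gamma \bmax M\,n^\gamma - \gamma \dmin\,n^{\gamma+\epsilon} \leq C_0 - c\, V(n)^{1+\epsilon/\gamma}
\]
for constants $C_0, c>0$ independent of $\alpha$ and $n$. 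Applying Jensen's inequality to $u(t) := \E_x[V(\X{\alpha}_t)]$ and integrating $\dot u \leq C_0 - c\, u^{1+\epsilon/\gamma}$ yields a uniform moment bound $\sup_{x\in\N,\alpha\in\Ac_M}\E_x[(\X{\alpha}_t)^\gamma] \leq K$ for $t\geq 1$, hence $\P_x(\X{\alpha}_t > N) \leq K/N^\gamma$ by Markov's inequality. Second, pick $N$ large enough that this is $\leq 1/2$ and use that, since deaths are by one unit, the process starting from $x > N$ enters $\{0,1,\ldots,N\}$ only through states in $\{1,\ldots,N\}$: strong Markov at the first hitting time of $\{1,\ldots,N\}$, combined with the no-event survival bound $\P_y(t<\tau^\alpha) \geq \e{-(\bmax N + \max_{y'\leq N}\dmax_{y'})\,t}$ for $y\leq N$, delivers a uniform lower bound $\P_x(t<\tau^\alpha) \geq K_2(t) > 0$. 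Third, dividing, $\P_x(\X{\alpha}_t > L \mid t<\tau^\alpha) \leq K/(L^\gamma K_2(t))$, which is $\leq 1/2$ for $L$ chosen large enough. Finally, with $c_L := \min_{1\leq y\leq L}\inf_{\alpha\in\Ac_M}\P_y(\X{\alpha}_1 = 1) > 0$ given by Hypothesis~\ref{hyp:qsd}(ii), the Markov property at time $t$ yields
\[
\P_x(\X{\alpha}_{t+1} = 1) \geq c_L\,\P_x(\X{\alpha}_t\in\{1,\ldots,L\}) \geq \frac{c_L}{2}\,\P_x(t < \tau^\alpha) \geq \frac{c_L}{2}\,\P_x(t+1 < \tau^\alpha),
\]
which is~(i) with $t_0 := t+1$ and $c_1 := c_L/2$.

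For \textbf{part~(ii)}, the key observation is that, deaths being by single units, the process starting at $x \geq 1$ must visit state~$1$ before being absorbed. Setting $\sigma_1 := \inf\{s\geq 0 : \X{\alpha}_s = 1\}$, the strong Markov property at $\sigma_1$ implies that $\tau^\alpha$ has the same distribution as $\sigma_1 + \tilde\tau$, where $\tilde\tau$ is independent of $\sigma_1$ and distributed as $\tau^\alpha$ under $\P_1$. Writing $g(s) := \P_1(\tau^\alpha > s)$, one obtains the renewal identity
\[
\P_x(\tau^\alpha > t) = \P_x(\sigma_1 > t) + \int_0^t g(t-s)\,dF_{\sigma_1}^x(s),
\]
where $F_{\sigma_1}^x$ is the c.d.f.\ of $\sigma_1$ under $\P_x$. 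I would then establish two ingredients: (a)~a uniform exponential moment $\E_x[\e{\theta\sigma_1}] \leq C$ for some $\theta > 0$, via a further Lyapunov computation that compares $\sigma_1$ to the hitting time of~$1$ in the pure-death process of rates $\dmin\,k^{1+\epsilon}$; and (b)~the estimate $g(t-s) \leq c^{-1}\e{\theta s}g(t)$, which follows from $g(t) \geq g(t-s)\inf_{y\geq 1}\P_y(\tau^\alpha > s)$ (Markov at time $t-s$) together with the lower bound $\inf_{y\geq 1}\P_y(\tau^\alpha > s) \geq c\,\e{-\theta s}$ obtained by replaying the survival argument of~(i). Combining (a) and (b) bounds the renewal integral by $C'g(t)$; combining the exponential tail of $\sigma_1$ with $g(t) \geq \e{-(\bmax + \dmax_1)t}$ (no-event bound at state~$1$) bounds $\P_x(\sigma_1 > t)$ by $C''g(t)$; summing gives $\P_x(\tau^\alpha > t) \leq C_2\,\P_1(\tau^\alpha > t)$, which is~(ii).

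The main obstacle lies in part~(ii): the exponent $\theta$ must simultaneously be large enough to dominate the rate appearing in the lower bound on $\inf_y\P_y(\tau^\alpha > s)$---which itself depends on the bounded-set cutoff used in part~(i)---and small enough to lie below the pure-death spectral gap $\dmin\,2^{1+\epsilon}$, so that $\E_x[\e{\theta\sigma_1}]$ remains uniformly finite. Reconciling these two requirements through a joint choice of $\gamma$ and of the cutoff is the delicate heart of the argument.
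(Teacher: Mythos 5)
Your part~(i) is essentially correct and is a legitimate variant of the paper's route: you establish coming down from infinity through a polynomial moment bound $\E_x[(\X{\alpha}_t)^\gamma]\leq K$ (uniform in $x$ and $\alpha$ for $t\geq 1$), whereas the paper uses the bounded Lyapunov function $\psi(x)=\sum_{y\leq x}y^{-(1+\epsilon/2)}$ to get uniform exponential moments of the hitting time of a large state; either way, the conclusion follows from irreducibility over a finite set together with the elementary survival bound. (Incidentally, your ``uniform lower bound $\P_x(t<\tau^\alpha)\geq K_2(t)$'' needs no hitting-time argument: absorption can only occur from state $1$ at rate $d^\alpha(1)\leq\dmax_1$, so $\P_x(t<\tau^\alpha)\geq \e{-\dmax_1 t}$ for every $x\geq 1$.)

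Part~(ii), however, has a genuine gap, and it sits exactly where you locate ``the delicate heart of the argument.'' Your renewal decomposition at $\sigma_1$, the first hitting time of state $1$, forces you to prove $\E_x[\e{\theta\sigma_1}]\leq C$ uniformly in $x$ and $\alpha$ for a $\theta$ at least as large as the rate appearing in the lower bound $\inf_y\P_y(s<\tau^\alpha)\geq c\,\e{-\theta s}$; the natural such rate is $\dmax_1$. This requirement is not attainable in general. The comparison with ``the pure-death process of rates $\dmin k^{1+\epsilon}$'' is invalid for bounding a \emph{downward} hitting time from above, since births cannot be discarded; and no Lyapunov inequality of the form $\Lc^\alpha\psi\leq-\theta\psi$ can hold on all of $\{2,3,\dots\}$, because Hypotheses~\ref{hyp:population} and~\ref{hyp:qsd} allow the process to be strongly supercritical at moderate population sizes (take $\bmax$ large and $\dmin$ small): starting from $2$, the process typically climbs to its quasi-equilibrium level and lingers there for a time whose exponential tail rate can be made arbitrarily small, while $\dmax_1$ stays fixed, so $\E_2[\e{\dmax_1\sigma_1}]=+\infty$. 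The way out — which is the paper's argument — is to decompose at the hitting time $\zeta^\alpha_\lambda$ of a \emph{large} state $x_\lambda$ rather than of state $1$: for $x\geq x_\lambda$ the superquadratic death rate gives $\Lc^\alpha\psi\leq-\lambda\psi$ with $\lambda=\dmax_1+1$ prescribed in advance (choosing $x_\lambda$ accordingly), hence $\sup_{x\geq x_\lambda,\alpha}\E_x[\e{\lambda\zeta^\alpha_\lambda}]<\infty$; one then writes $\P_x(t<\tau^\alpha)=\P_x(t<\zeta^\alpha_\lambda)+\P_x(\zeta^\alpha_\lambda<t<\tau^\alpha)$ and, crucially, bridges from $x_\lambda$ down to $1$ not through a hitting time but through the one-step irreducibility of Hypothesis~\ref{hyp:qsd}~(ii) over the finite set $\{1,\dots,x_\lambda\}$, via $\P_1(\X{\alpha}_1=y)\,\P_y(t<\tau^\alpha)\leq\P_1(t+1<\tau^\alpha)\leq\P_1(t<\tau^\alpha)$. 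Unless you replace your decomposition at $\sigma_1$ by one at such an $x_\lambda$, part~(ii) does not go through.
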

  
  By~\cite[Thm.\,2.1]{champagnat-villemonais-15}, these properties directly imply~\eqref{eq:cv-expo} and hence
  Theorem~\ref{thm:QSD-short-version} with $\gamma=-\log(1-c_1c_2)/t_0$ and $C=2/(1-c_1c_2)$. Since $t_0$, $c_1$ and $c_2$ do not
  depend on $\alpha$, so do $C$ and $\gamma$.

  The existence of the function $\eta^\alpha$ of Proposition~\ref{thm:QSD}, limit of $\P_\cdot(t<\tau^\alpha)e^{\lambda^\alpha t}$
  and eigenfunction of $\mathcal{L}^\alpha$, is given by~\cite[Prop.\,2.3]{champagnat-villemonais-15}. In addition, the proof of
  Proposition 2.3 in~\cite[Sec.5.2]{champagnat-villemonais-15} implies that~\eqref{eq:etadef} holds with the constant
  $C'=C\kappa^\alpha$, where
  $$
  \kappa^\alpha:=\sup_{t\geq 0,\,\mu\in {\cal M}_1(\N)} e^{\lambda^\alpha t}\P_\mu(t<\tau^\alpha),
  $$
  where $\mathcal{M}_1(\N)$ is the set of probability measures on $\N$. It only remains to check that $\kappa^\alpha$ is bounded as a
  function of $\alpha\in\mathcal{A}_M$. It follows from~\cite[Remark~1]{champagnat-villemonais-15} that
  \begin{equation*}
    \kappa^\alpha\leq\inf_{s>0}\exp\left(\lambda^\alpha s+\frac{C e^{(\lambda^\alpha-\gamma)s}}{1-e^{-\gamma s}}\right),
  \end{equation*}
  where $C$ and $\gamma$ are the constants of Theorem~\ref{thm:QSD-short-version}. So the proof will be completed if we check that
  $\lambda^\alpha$ is uniformly bounded w.r.t.\ $\alpha$. This follows from the general fact that, for any Markov process in $\ZZ_+$
  absorbed at 0, the extinction time is larger than an exponential random variable of parameter the supremum of the extinction rate
  in the population from any state in $\N$. In our branching process, this supremum is $d^\alpha_1\leq \bar{d}_1$. Hence, it follows
  from~\eqref{eq:def-lambda} that $\lambda^\alpha\leq \bar{d}_1$, which ends the proof of Proposition~\ref{thm:QSD}.
\end{proof}

In order to prove Lemma~\ref{lem:hypcv}, we need to prove that the controlled process \textit{comes down from
  infinity}~\cite{vandoorn-91,champagnat-villemonais-15}. The next lemma, based on similar arguments as
in~\cite{champagnat-villemonais-PNM-15}, is a preliminary step for this.

\begin{lemma}
  \label{lem:downfrominf}
  For all $\lambda>0$, there exists $x_\lambda\in\N$ such that
  \begin{equation*}
    \sup_{\substack{x\geq x_\lambda \\ \alpha\in\Ac_M}} {\E_x\left[\e{\lambda \zeta^{\alpha}_{\lambda}}\right]} < +\infty,
  \end{equation*}
  where $ \zeta^{\alpha}_{\lambda}$ denotes the first hitting time of $\{x_\lambda\}$ by $\X{\alpha}$, that is,
  \begin{equation*}
     \zeta^{\alpha}_{\lambda} := \inf {\left\{t\geq0:\ \X{\alpha}_t = x_\lambda\right\}}.
  \end{equation*}
\end{lemma}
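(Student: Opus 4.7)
The strategy is to construct a bounded Lyapunov function $V:\N\to[1,2]$ satisfying $\Lc^{\alpha}V(n)+\lambda V(n)\leq 0$ for all $n$ large enough, uniformly in $\alpha\in\Ac_M$, and then to deduce the exponential moment bound from the fact that $(e^{\lambda(t\wedge\zeta^\alpha_\lambda)}V(\X{\alpha}_{t\wedge\zeta^\alpha_\lambda}))_{t\geq 0}$ is a nonnegative supermartingale. The hypothesis that $d_n(a)\geq \dmin n^{1+\epsilon}$ combined with $b_n(a)\leq\bmax n$ and $\sum_k k p_{n,k}(a)\leq M$ is what produces the strong downward drift at infinity, and these bounds are uniform in $a\in A$, hence in $\alpha\in\Ac_M$.

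Concretely, I would pick $\delta\in(0,\epsilon)$ and set $V(n):=2-n^{-\delta}$ for $n\geq 1$. Then $V\in[1,2]$, $V$ is concave on $[1,\infty)$, and $V'(n)=\delta n^{-\delta-1}$. By concavity, $V(n+k)-V(n)\leq k\delta n^{-\delta-1}$ and $V(n-1)-V(n)\leq -\delta n^{-\delta-1}$ (for $n\geq 2$). Plugging these into~\eqref{eq:operateur} and using Hypotheses~\ref{hyp:population}~(ii), (iv) and~\ref{hyp:qsd}~(i),
\begin{equation*}
\Lc^{\alpha}V(n)\leq \bmax n\cdot \frac{\delta}{n^{\delta+1}}\cdot M\;-\;\dmin n^{1+\epsilon}\cdot\frac{\delta}{n^{\delta+1}}=\frac{\bmax M\delta}{n^{\delta}}-\dmin\delta\,n^{\epsilon-\delta},
\end{equation*}
uniformly in $\alpha\in\Ac_M$. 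Since $\delta<\epsilon$, the right-hand side tends to $-\infty$ as $n\to\infty$. Adding $\lambda V(n)\leq 2\lambda$, I can therefore choose $x_\lambda\in\N$, depending only on $\lambda$, $\delta$, $\bmax$, $\dmin$, $M$ and $\epsilon$, such that $\Lc^{\alpha}V(n)+\lambda V(n)\leq 0$ for every $n>x_\lambda$ and every $\alpha\in\Ac_M$.

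Now define $\zeta=\zeta^\alpha_\lambda$. Using the Dynkin-type formula associated with the Poisson stochastic integrals in~\eqref{eq:edsm}, and stopping at $\zeta$, the process $M_t:=e^{\lambda(t\wedge\zeta)}V(\X{\alpha}_{t\wedge\zeta})$ is a nonnegative local supermartingale, hence a true supermartingale. Therefore, for every $x\geq x_\lambda$,
\begin{equation*}
\E_x\bigl[e^{\lambda(t\wedge\zeta)}V(\X{\alpha}_{t\wedge\zeta})\bigr]\leq V(x)\leq 2.
\end{equation*}
Sending $t\to\infty$ and applying Fatou's lemma yields $\E_x[e^{\lambda\zeta}V(x_\lambda)]\leq V(x)\leq 2$, so that $\E_x[e^{\lambda\zeta}]\leq 2/V(x_\lambda)$ for every $x\geq x_\lambda$ and every $\alpha\in\Ac_M$, which is the claim.

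The mildly delicate step is the control of the birth term, because births can produce arbitrarily large jumps; this is handled cleanly by concavity of $V$ combined with the uniform bound on $\sum_k k p_{n,k}(a)$. Everything else is a standard Lyapunov/supermartingale argument, and the uniformity in $\alpha\in\Ac_M$ follows automatically from the uniformity in $a\in A$ of the estimates in Hypotheses~\ref{hyp:population} and~\ref{hyp:qsd}~(i).
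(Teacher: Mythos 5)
Your proof is correct and follows essentially the same route as the paper: a bounded, increasing Lyapunov function with increments of order $n^{-(1+\delta)}$ (the paper uses $\psi(x)=\sum_{y=1}^{x}y^{-(1+\epsilon/2)}$, you use $2-n^{-\delta}$, which is the same idea), a uniform drift inequality $\Lc^{\alpha}V+\lambda V\leq 0$ above a threshold obtained from $d_n(a)\geq\dmin n^{1+\epsilon}$ versus $b_n(a)\sum_k kp_{n,k}(a)\leq\bmax Mn$, and the stopped supermartingale/Fatou argument. Your concavity bound on $V(n+k)-V(n)$ plays exactly the role of the paper's estimate $\sum_{y=x+1}^{x+k}y^{-(1+\epsilon/2)}\leq kx^{-(1+\epsilon/2)}$, so no gap.
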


\begin{proof}
 Let $\psi:\ZZ_+\rightarrow\R_+$ be given by
  \begin{equation*}
    \psi(x) := \sum_{y=1}^{x} \frac{1}{y^{1+\frac{\epsilon}{2}}},\quad \forall\,x\in\ZZ_+,
  \end{equation*}  
  where $\epsilon>0$ comes from Hypothesis~\ref{hyp:qsd}~(i). We start by proving that, for all $\lambda>0$, there exists $x_\lambda\in\N$ such that
  \begin{equation}\label{eq:lemmastep1}
    \Lc^{\alpha} {\psi} (x) \leq -\lambda\psi(x), \quad \forall\,x\geq x_\lambda, \alpha\in\Ac_M.
  \end{equation}
  It follows from Hypotheses~\ref{hyp:population} and \ref{hyp:qsd} that
  \begin{align*}
    \Lc^{\alpha} {\psi} (x) 
    &\leq \dmin x^{1+\epsilon} \left(\psi(x-1)-\psi(x)\right) + \bmax x \sum_{k=1}^{+\infty} {\left(\psi(x+k)-\psi(x)\right)p^{\alpha}_k(x)} \\
    &\leq -\dmin x^{\frac{\epsilon}{2}} + \bmax x \sum_{k=1}^{+\infty} {\left(\sum_{y=x+1}^{x+k} \frac{1}{y^{1+\frac{\epsilon}{2}}}\right)p^{\alpha}_k(x)}.
  \end{align*}
  Further, one has
  \begin{align*}
    \sum_{k=1}^{+\infty} {\left(\sum_{y=x+1}^{x+k} {\frac{1}{y^{1+\frac{\epsilon}{2}}}}\right)p^{\alpha}_k(x)} 
    \leq\frac{1}{x^{1+\frac{\epsilon}{2}}}\sum_{k=1}^{+\infty} kp^\alpha_k(x)\leq \frac{M}{x^{1+\frac{\epsilon}{2}}},
  \end{align*}
  where $M>0$ comes from Hypothesis~\ref{hyp:population}~(iv). 
 Hence, we deduce that there exists $x_1$ such that
  \begin{equation*}
    \Lc^{\alpha} {\psi} (x) \leq - \frac{\dmin}{2} x^{\frac{\epsilon}{2}}, \quad\forall\,x\geq x_1.
  \end{equation*}
  Now, given $\lambda>0$, we take $x_\lambda$ as the smallest integer not less than $x_1$ and $(\frac{2\lambda\psi(\infty)}{\dmin})^{\frac{2}{\epsilon}}$, where $\psi(\infty)$ denotes the limit of $\psi(x)$.
  Then one has for all $x\geq x_\lambda$,
  \begin{equation*}
    \Lc^{\alpha} {\psi} (x) \leq - \frac{\dmin}{2} {x_\lambda}^{\frac{\epsilon}{2}} \leq - \frac{\dmin{x_\lambda}^{\frac{\epsilon}{2}}}{2\psi(\infty)} \psi(x) \leq - \lambda  \psi(x).
  \end{equation*}
  
  We are now in a position to complete the proof. As already observed, we have
  \begin{multline*}
    \e{\lambda s} \psi\left(\X{x,\alpha}_s\right) = \psi(x) + \int_0^s {\e{\lambda \theta}
      \lambda\psi\left(\X{x,\alpha}_\theta\right) d\theta}\\
    \begin{aligned}
      & + \int_{(t,s]\x\R_+}  { \e{\lambda \theta}\sum_{k=1}^{+\infty} {\left(\psi\left(\X{x,\alpha}_{\theta-} + k\right) -
              \psi\left(\X{x,\alpha}_{\theta-}\right)\right) \mathds{1}_{I_k^\alpha\left(\X{x,\alpha}_{\theta-}\right)}(z)}
         \,Q_1(d\theta, dz)}\\
      & + \int_{(t,s]\x\R_+}  { \e{\lambda \theta}\left(\psi\left(\X{x,\alpha}_{\theta-} - 1\right) -
            \psi\left(\X{x,\alpha}_{\theta-}\right)\right)
          \mathds{1}_{\left[0,d^\alpha\left(\X{x,\alpha}_{\theta-}\right)\right)}(z) \,Q_2(d\theta, dz)}.
    \end{aligned}
  \end{multline*}
  We localize the relation above by $\zeta^{\alpha}_{\lambda}$ and we take the expectation. Since $\psi$ is bounded, we obtain
  \begin{equation*}
    \E_x\left[\e{\lambda \zeta^{\alpha}_{\lambda}\wedge s} \psi\left(\X{\alpha}_{\zeta^{\alpha}_{\lambda}\wedge s}\right)\right] = \psi(x) + \E_x\left[ \int_0^{\zeta^{\alpha}_{\lambda}\wedge s} {\e{\lambda \theta} \left(\lambda\psi\left(\X{\alpha}_\theta\right) + \Lc^{\alpha_\theta}{\psi}\left(\X{\alpha}_\theta\right)\right) d\theta}\right].
  \end{equation*}
  It follows from the inequality~\eqref{eq:lemmastep1} that
  \begin{equation*}
   \E_x\left[\e{\lambda \zeta^{\alpha}_{\lambda}\wedge s} \psi\left(\X{\alpha}_{\zeta^{\alpha}_{\lambda}\wedge s}\right)\right] \leq \psi(x),\quad \forall\,x\geq x_\lambda, \alpha\in\Ac_M.
  \end{equation*}
  Hence, we deduce that
  \begin{equation*}
    \E_x\left[\e{\lambda \zeta^{\alpha}_{\lambda}\wedge s}\right] \leq \frac{\psi(\infty)}{\psi(x_\lambda)}, \quad \forall\,x\geq x_\lambda, \alpha\in\Ac_M.
  \end{equation*}
  The conclusion follows from the monotone convergence theorem.
\end{proof}

\begin{proof}[Proof of Lemma~\ref{lem:hypcv}]
  This proof uses a similar method as in~\cite[Thm.\,4.1]{champagnat-villemonais-15}. Let $\lambda=\dmax_1+1$ and $x_\lambda\in\N$ be
  given by Lemma~\ref{lem:downfrominf} such that
  \begin{equation*}
    C_1:=\sup_{\substack{x\geq x_\lambda \\ \alpha\in\Ac_M}} {\E_x\left[\e{\lambda \zeta^{\alpha}_{\lambda}}\right]} < +\infty.
  \end{equation*}
  We start by proving~(ii). The Markov property yields, for all $x\in\N$,
  \begin{equation*}
    \P_1\left(\X{\alpha}_1=x\right)\P_x\left(t<\tau^{\alpha}\right) \leq \P_1\left(t+1<\tau^{\alpha}\right) \leq \P_1\left(t<\tau^{\alpha}\right).
  \end{equation*}
  Hence, if we denote 
  \begin{equation*}
    \frac{1}{C_2}:=\inf_{\substack{\alpha\in\Ac_M \\1\leq y,z\leq x_\lambda}} {\P_y\left(\X{\alpha}_1 = z\right)},
  \end{equation*}
  we deduce that for all $1\leq x\leq x_\lambda$,
  \begin{equation}
   \label{eq:lem}
    \P_x\left(t<\tau^{\alpha}\right)\leq C_2 \P_1\left(t<\tau^{\alpha}\right).
  \end{equation}
  Note that $C_2$ is well-defined in view of Hypothesis~\ref{hyp:qsd}~(ii).
  Further, we have for all $x\geq x_\lambda$,
  \begin{equation*}
    \P_x\left(t<\tau^{\alpha}\right) = \P_x\left(t <  \zeta^{\alpha}_{\lambda}\right) + \P_x\left( \zeta^{\alpha}_{\lambda} < t < \tau^{\alpha}\right).
  \end{equation*}
  By Markov's inequality, the first term on the r.h.s.\ above can be bounded as follows:
  \begin{equation*}
    \P_x\left(t <  \zeta^{\alpha}_{\lambda}\right) \leq C_1 \e{-\lambda t} \leq C_1 \P_1\left(t<\tau^{\alpha}\right),
  \end{equation*}
  where the last inequality comes from $\lambda>\dmax_1$. For the second term, the Markov property and the inequality~\eqref{eq:lem} yield
  \begin{align*}
    \P_x\left( \zeta^{\alpha}_{\lambda} < t < \tau^{\alpha}\right) 
    & = \int_0^t {\P_{x_\lambda}\left(t-s < \tau^{\alpha}\right) \P_{x}\left( \zeta^{\alpha}_{\lambda}\in ds\right)} \\
    & \leq C_2 \int_0^t {\P_{1}\left(t-s < \tau^{\alpha}\right) \P_{x}\left( \zeta^{\alpha}_{\lambda}\in ds\right)}.
  \end{align*}
  Moreover, since $\lambda>\dmax_1$, we have
  \begin{equation*}
    \e{-\lambda s} \P_{1}\left(t-s < \tau^{\alpha}\right) \leq \inf_{x\geq 1} {\left\{\P_x\left(s < \tau^{\alpha}\right)\right\}}
    \P_{1}\left(t-s < \tau^{\alpha}\right) \leq \P_{1}\left(t < \tau^{\alpha}\right).
  \end{equation*}
  Hence, we obtain
  \begin{equation*}
    \P_x\left( \zeta^{\alpha}_{\lambda} < t < \tau^{\alpha}\right) 
     \leq C_2 \P_{1}\left(t < \tau^{\alpha}\right) \E_x\left[\e{\lambda \zeta^{\alpha}_{\lambda}}\right]
     \leq C_1 C_2 \P_{1}\left(t < \tau^{\alpha}\right).
  \end{equation*}
  This ends the proof of the assertion~(ii).
  
  Now we prove the assertion~(i). Let $\lambda$, $x_\lambda$, $C_1$ and $C_2$ be defined as above. For all $t\geq0$ and
  $\alpha\in\Ac_M$, we consider two cases: either $x\leq x_\lambda$,
  \begin{equation*}
   \P_{x}\left(\X{\alpha}_{t+1}=1\right) 
     \geq \P_{x}\left(\X{\alpha}_{1} = 1\right) \P_1\left(\X{\alpha}\text{ has no jump in }(0, t]\right) \\
     \geq \frac{1}{C_2}\e{-(\dmax_1 + \bmax)t},
  \end{equation*}
  or $x>x_\lambda$,
  \begin{align*}
    \P_{x}\left(\X{\alpha}_{t+1}=1\right) 
    & \geq \P_{x}\left( \zeta^{\alpha}_{\lambda}\leq t\right) \P_{x_\lambda}\left(\X{\alpha}_{1} = 1\right) \P_1\left(\X{\alpha}\text{ has no jump in }(0, t]\right) \\
    & \geq \left(1 - C_1\e{-\lambda t}\right)^+\frac{1}{C_2}\e{-(\dmax_1 + \bmax)t}.
  \end{align*}
  Hence, if we take $t_0>1$ such that $C_1\e{-\lambda (t_0 - 1)}\leq\frac{1}{2}$, we obtain
  \begin{equation*}
    \P_x\left(\X{\alpha}_{t_0}=1\right) \geq \frac{\e{-(\dmax_1 + \bmax)(t_0-1)}}{2 C_2}>0,\quad \forall\,x\in\N.
  \end{equation*}
  We finally notice that
  \begin{equation*}
    \P_x\left(\X{\alpha}_{t_0}=1\mid t_0<\tau^{\alpha}\right)\geq \P_x\left(\X{\alpha}_{t_0}=1\right),\quad \forall\,x\in\N,
  \end{equation*}
  which concludes the proof of Lemma~\ref{lem:hypcv}.
\end{proof}

We conclude this section with a proposition, which is used in the proof of Theorems~\ref{thm:main-inf} and \ref{thm:main-sup}.

\begin{proposition}\label{prop:minoration}
  For all $x\in\N$, it holds
  \begin{equation*}
   \inf_{\alpha\in\Ac_M} {\pi^{\alpha}(x)} > 0,\qquad \inf_{\alpha\in\Ac_M} {\eta^{\alpha}(x)} > 0.
  \end{equation*}
\end{proposition}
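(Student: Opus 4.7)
The plan is to obtain uniform lower bounds on $\pi^\alpha$ and $\eta^\alpha$ at each state $x\in\N$ by combining the two estimates of Lemma~\ref{lem:hypcv} with the one-step irreducibility from Hypothesis~\ref{hyp:qsd}~(ii), and with the normalization $\pi^\alpha(\eta^\alpha)=1$ established in Proposition~\ref{thm:QSD}. The strategy is first to establish the bound at the reference state $1$ (using state $1$ as a ``hub''), and then to propagate to arbitrary $x$ through a single Markov step.

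I would first bound $\pi^\alpha(1)$ from below. Since $\pi^\alpha$ is a QSD, applying Lemma~\ref{lem:hypcv}~(i) to $\P_{\pi^\alpha}$ gives
\[
\pi^\alpha(1) = \P_{\pi^\alpha}\bigl(X^\alpha_{t_0}=1 \mid t_0<\tau^\alpha\bigr) \geq c_1,
\]
where $c_1,t_0$ are the uniform constants of Lemma~\ref{lem:hypcv}. To pass from $1$ to a general $x\in\N$, I use the QSD identity $\P_{\pi^\alpha}(X^\alpha_1=x)=e^{-\lambda^\alpha}\pi^\alpha(x)$ together with the Markov property:
\[
\pi^\alpha(x)=e^{\lambda^\alpha}\sum_{y\geq1}\pi^\alpha(y)\P_y(X^\alpha_1=x)\geq \pi^\alpha(1)\,\P_1(X^\alpha_1=x)\geq c_1\,\inf_{\beta\in\Ac_M}\P_1(X^\beta_1=x),
\]
and the last infimum is positive by Hypothesis~\ref{hyp:qsd}~(ii).

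Next I would bound $\eta^\alpha(1)$. From Lemma~\ref{lem:hypcv}~(ii) we have $\P_1(t<\tau^\alpha)\geq c_2\,\P_x(t<\tau^\alpha)$ for every $x\in\N$; multiplying by $e^{\lambda^\alpha t}$ and letting $t\to\infty$ thanks to Proposition~\ref{thm:QSD} yields $\eta^\alpha(1)\geq c_2\,\eta^\alpha(x)$ for all $x$. Since $\pi^\alpha(\eta^\alpha)=1$ and $\pi^\alpha$ is a probability measure on $\N$, at least one $x$ must satisfy $\eta^\alpha(x)\geq1$, so $\eta^\alpha(1)\geq c_2$. To handle general $x$, apply the Markov property at time $1$:
\[
\P_x(t+1<\tau^\alpha)\geq \P_x(X^\alpha_1=1)\,\P_1(t<\tau^\alpha),
\]
multiply by $e^{\lambda^\alpha(t+1)}$, and take $t\to\infty$ to obtain $\eta^\alpha(x)\geq e^{\lambda^\alpha}\,\P_x(X^\alpha_1=1)\,\eta^\alpha(1)\geq c_2\,\inf_{\beta\in\Ac_M}\P_x(X^\beta_1=1)$, again positive by Hypothesis~\ref{hyp:qsd}~(ii).

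There is no serious obstacle here: once Lemma~\ref{lem:hypcv} and the uniformity of its constants are in hand, everything reduces to the Markov property and the two normalizations (the QSD property and $\pi^\alpha(\eta^\alpha)=1$). The only point worth some care is the observation that $\sup_{x\in\N}\eta^\alpha(x)\geq1$, which follows from $\eta^\alpha>0$ on $\N$ (Proposition~\ref{thm:QSD}) together with the fact that $\pi^\alpha$ is a probability on $\N$; this is what lets us convert the relative bound $\eta^\alpha(1)\geq c_2\sup_x\eta^\alpha(x)$ into an absolute bound.
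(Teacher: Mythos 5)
Your proof is correct, and it differs from the paper's in one genuine respect: the propagation step. The paper handles a general $x\in\N$ by asserting that Lemma~\ref{lem:hypcv} extends verbatim with the target state $1$ replaced by $x$ (constants now depending on $x$), and then simply integrates both estimates against $\pi^\alpha$, using $\P_{\pi^\alpha}(\cdot\mid t_0<\tau^\alpha)=\pi^\alpha$ for the first and $\P_{\pi^\alpha}(t<\tau^\alpha)=e^{-\lambda^\alpha t}$ for the second. You instead keep Lemma~\ref{lem:hypcv} exactly as stated, obtain the bounds at the hub state $1$ (for $\pi^\alpha(1)$ by the same integration as the paper; for $\eta^\alpha(1)$ via the normalization $\pi^\alpha(\eta^\alpha)=1$ and the observation $\sup_x\eta^\alpha(x)\geq 1$, rather than via $\P_{\pi^\alpha}(t<\tau^\alpha)=e^{-\lambda^\alpha t}$), and then transfer to arbitrary $x$ through a single Markov step combined with the uniform irreducibility of Hypothesis~\ref{hyp:qsd}~(ii) and the eigen-identities $\P_{\pi^\alpha}(X^\alpha_1=x)=e^{-\lambda^\alpha}\pi^\alpha(x)$ and $\eta^\alpha(x)\geq e^{\lambda^\alpha}\P_x(X^\alpha_1=1)\eta^\alpha(1)$. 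What your route buys is self-containedness: you never need to re-run the proof of Lemma~\ref{lem:hypcv} for a new target state (the paper's ``straightforward extension'' is left to the reader), at the cost of invoking Hypothesis~\ref{hyp:qsd}~(ii) explicitly and the small auxiliary fact that $\sup_x\eta^\alpha(x)\geq 1$. Both arguments yield bounds that are uniform in $\alpha$ but depend on $x$, which is all the proposition claims.
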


\begin{proof}
  Fix $x\in\N$. A straightforward extension of Lemma~\ref{lem:hypcv} yields\\
  \rmi there exists $t_{0},c_{1}>0$ such that for all $y\in\N$ and $\alpha\in\Ac_M$,
   \begin{equation*}
    \P_y\left(\X{\alpha}_{t_{0}} = x \mid t_{0}<\tau^{\alpha}\right) \geq c_{1};
   \end{equation*}
  \rmii there exists $c_{2}>0$ such that for all $y\in\N$, $t\geq0$ and $\alpha\in\Ac_M$,
   \begin{equation*}
    \P_{x}\left(t<\tau^{\alpha}\right) \geq c_{2}\P_{y}\left(t<\tau^{\alpha}\right).
   \end{equation*}
  Note that the constants $t_0$, $c_1$ and $c_2$ above depend on $x$.
  Then, we integrate the first inequality w.r.t. $\pi^{\alpha}$. It yields
  \begin{equation*}
   \pi^{\alpha}(x)\geq c_{1},\quad \forall\, \alpha\in\Ac_M.
  \end{equation*}
  Finally, we integrate the second inequality w.r.t. $\pi^{\alpha}$, multiply it by $e^{\lambda^{\alpha} t}$ and send $t$ to infinity. We obtain
  \begin{equation*}
   \eta^{\alpha}(x) \geq c_{2},\quad \forall\, \alpha\in\Ac_M. \qedhere
  \end{equation*}
\end{proof}

\section{Proof of the main results}
\label{sec:proof}

\subsection{Proof of Theorem~\ref{thm:main-inf}}\label{sec:proofthmmain}
  
 We begin with a premilinary result, which is a refinement of Proposition~\ref{prop:controlqsd}.

\begin{proposition}\label{prop:main}
  Under Hypotheses~\ref{hyp:population}, \ref{hyp:qsd} and \ref{hyp:cost}, there exists $C>0$ such that, for all $\alpha\in\Ac_M$
  satisfying $\lambda^\alpha>\beta$, for all $x\in\N$,
  \begin{equation*}
    \left|J_{\beta}(x,\alpha) - \frac{\pi^{\alpha}(f^{\alpha})}{\lambda^{\alpha}-\beta}\eta^{\alpha}(x)\right|\leq C.
  \end{equation*}
  In addition, under Hypotheses~\ref{hyp:main} and \ref{hyp:main-inf}, for all $\alpha\in\Ac_M$ such that 
  $\lambda^\alpha\leq\beta$, it holds $J_{\beta}(x,\alpha)=+\infty$ for all $x\in\N$.
\end{proposition}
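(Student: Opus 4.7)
The plan is to derive both assertions from a careful asymptotic decomposition of $\E_x[f^\alpha(X^\alpha_t)]$, using the uniform QSD estimates collected in Theorem~\ref{thm:QSD-short-version} and Proposition~\ref{thm:QSD}. Since $f(0,\cdot)\equiv 0$, we first write
\begin{equation*}
\E_x\!\left[f^\alpha(X^\alpha_t)\right] = \P_x(t<\tau^\alpha)\cdot \E_x\!\left[f^\alpha(X^\alpha_t)\,\big|\,t<\tau^\alpha\right],
\end{equation*}
then replace the first factor by $\eta^\alpha(x)e^{-\lambda^\alpha t}$ using \eqref{eq:etadef}, and the second by $\pi^\alpha(f^\alpha)$ using the total variation convergence of Theorem~\ref{thm:QSD-short-version}, with errors respectively bounded by $C'e^{-(\lambda^\alpha+\gamma)t}$ and $\|f\|_\infty C e^{-\gamma t}$ (uniformly in $\alpha\in\Ac_M$).

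For the first assertion, I expand the product, obtaining
\begin{equation*}
\E_x\!\left[f^\alpha(X^\alpha_t)\right] = \eta^\alpha(x)\,e^{-\lambda^\alpha t}\pi^\alpha(f^\alpha) + R(x,\alpha,t),
\end{equation*}
where $|R(x,\alpha,t)|$ is controlled by a sum of terms of the form $(\text{const})\,e^{-(\lambda^\alpha+\gamma)t}$ and $(\text{const})\,e^{-\gamma t}\P_x(t<\tau^\alpha)$. Multiplying by $e^{\beta t}$ and integrating, the main term yields exactly $\eta^\alpha(x)\pi^\alpha(f^\alpha)/(\lambda^\alpha-\beta)$, while the residual integrals are bounded by $(\text{const})\,\|f\|_\infty/\gamma$ uniformly in $\alpha$, since $\lambda^\alpha-\beta+\gamma>\gamma$ and $\eta^\alpha(x)\leq 1+C'$ (the latter being immediate from \eqref{eq:etadef} at $t=0$). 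The constants involved are uniform in $\alpha\in\Ac_M$ because $C,C',\gamma$ are, which yields the claimed bound.

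For the second assertion, I argue by a matching lower bound. Hypothesis~\ref{hyp:main-inf} provides some $x_0\in\N$ with $f(x_0,a)>0$ for all $a\in A$; since $A$ is compact and $f(x_0,\cdot)$ is continuous by Hypothesis~\ref{hyp:main}~(ii), we have $\inf_{a\in A}f(x_0,a)>0$, hence $f^\alpha(x_0)\geq c_0>0$ uniformly in $\alpha$. Combined with Proposition~\ref{prop:minoration}, this gives $\pi^\alpha(f^\alpha)\geq\pi^\alpha(x_0)f^\alpha(x_0)\geq c>0$ uniformly. The estimates of Theorem~\ref{thm:QSD-short-version} and Proposition~\ref{thm:QSD} then ensure, for $t$ larger than some $T=T(x,\alpha)$, the inequality
\begin{equation*}
\E_x\!\left[f^\alpha(X^\alpha_t)\right]\;\geq\; \tfrac{1}{2}\,\eta^\alpha(x)\,\pi^\alpha(f^\alpha)\,e^{-\lambda^\alpha t},
\end{equation*}
using again Proposition~\ref{prop:minoration} for positivity of $\eta^\alpha(x)$. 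Since $\beta\geq\lambda^\alpha$, the integral $\int_T^\infty e^{(\beta-\lambda^\alpha)t}\,dt$ diverges, so $J_\beta(x,\alpha)=+\infty$.

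I expect the proof to be essentially routine given the tools already in place; the main point requiring care is to verify that all constants stemming from the QSD estimates and from Proposition~\ref{prop:minoration} can be chosen independently of $\alpha\in\Ac_M$, which is exactly what the uniformity statements in Theorem~\ref{thm:QSD-short-version} and Proposition~\ref{thm:QSD} provide.
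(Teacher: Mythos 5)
Your proof is correct and follows essentially the same route as the paper: the same pointwise estimate comparing $\E_x[f^\alpha(X^\alpha_t)]$ with $\pi^\alpha(f^\alpha)\eta^\alpha(x)e^{-\lambda^\alpha t}$, the same uniform QSD inputs (Theorem~\ref{thm:QSD-short-version}, Proposition~\ref{thm:QSD}, Proposition~\ref{prop:minoration}), and the same integration and lower-bound argument for the two assertions. The only cosmetic difference is that you split the error as $\P_x(\,\cdot\,)(\E_x[\cdot\mid\cdot]-\pi^\alpha(f^\alpha))+(\P_x(\,\cdot\,)-\eta^\alpha(x)e^{-\lambda^\alpha t})\pi^\alpha(f^\alpha)$ whereas the paper puts $\eta^\alpha(x)e^{-\lambda^\alpha t}$ in front of the total-variation term; both yield the same $O(e^{-(\lambda^\alpha+\gamma)t})$ bound and the same $C_0/\gamma$ after integration.
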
  

\begin{proof}
First we observe that 
\begin{multline*}
  \left|\E_x\left[f^{\alpha}\left(\X{\alpha}_t\right)\right] -  \pi^{\alpha}(f^{\alpha})\eta^{\alpha}(x)\e{-\lambda^{\alpha} t} \right| \\
  \leq \eta^{\alpha}(x)\e{-\lambda^{\alpha} t} \left|\E_x\left[f^{\alpha}\left(\X{\alpha}_t\right) \mid
      t<\tau^{\alpha}\right] -  \pi^{\alpha}(f^{\alpha}) \right|  \\
  +  \lVert f\rVert_{\infty} \left|\P_x\left(t<\tau^{\alpha}\right) - \eta^{\alpha}(x)\e{-\lambda^{\alpha} t} \right|.
\end{multline*}
On the one hand, we deduce from Theorem~\ref{thm:QSD-short-version} that
\begin{align*}
  \left|\E_x\left[f^{\alpha}\left(\X{\alpha}_t\right) \mid t<\tau^{\alpha}\right] -  \pi^{\alpha}(f^{\alpha}) \right|
    & \leq \lVert f\rVert_{\infty} \TV{\P_x\left(\X{\alpha}_t\in\cdot \mid t<\tau^{\alpha}\right) -\pi^{\alpha}} \\
    & \leq C \lVert f\rVert_{\infty} \e{-\gamma t}.
\end{align*}
On the other hand, it follows from Proposition~\ref{thm:QSD} that for all $x\in\N$ and $\alpha\in\Ac_M$,
\begin{equation*}
 \eta^{\alpha}(x)\leq C' + 1 ~~ \text{and} ~~  \left|\P_x\left(t<\tau^{\alpha}\right) - \eta^{\alpha}(x)\e{-\lambda^{\alpha} t} \right|\leq C'e^{-(\lambda^{\alpha}+\gamma)t}.
\end{equation*}
Hence, we deduce that
\begin{equation}
 \label{eq:prop_main}
  \left|\E_x\left[f^{\alpha}\left(\X{\alpha}_t\right)\right] -  \pi^{\alpha}(f^{\alpha})\eta^{\alpha}(x)\e{-\lambda^{\alpha} t}
  \right| \leq C_0  \e{-(\lambda^\alpha + \gamma)t}
\end{equation}
where $C_0 := (C + C' + CC') \lVert f\rVert_{\infty}$. Hence, for all $x\in\N$ and $\alpha\in\Ac_M$ such that $\lambda^\alpha > \beta$,
\begin{align*}
  \left|J_{\beta}(x,\alpha) - \frac{\pi^{\alpha}(f^{\alpha})}{\lambda^{\alpha}-\beta}\eta^{\alpha}(x)\right|
  & \leq \int_0^{+\infty} \!\!\!{\e{\beta t} \left|\E_x\left[f^{\alpha}\left(\X{\alpha}_t\right)\right] -
      \pi^{\alpha}(f^{\alpha})\eta^{\alpha}(x)\e{-\lambda^{\alpha} t} \right| dt} \\
  & \leq C_0 \int_0^{+\infty} { \e{-(\lambda^\alpha + \gamma - \beta)t} \,dt}
  \leq \frac{C_0}{\gamma}. 
\end{align*}
 This ends the proof of the first assertion. As for the second one, we observe first that,
 under Hypotheses~\ref{hyp:main} and \ref{hyp:main-inf}, 
 there exists $x_0$ such that $\inf_{a\in A} \{f(x_0,a)\} > 0$. 
 Together with Proposition~\ref{prop:minoration}, it yields for all $x\in\N$
 \begin{equation*}
  \pi^{\alpha}(f^{\alpha})\eta^{\alpha}(x) \geq  \pi^{\alpha}(x_0) \eta^{\alpha}(x) \inf_{a\in A} \{f(x_0,a)\} > 0.
 \end{equation*}
 Further, it follows from~\eqref{eq:prop_main} that for all $\beta\geq\lambda^{\alpha}$ and $t$ sufficiently large,
 \begin{equation*}
  e^{\beta t} \E_x\left[f^{\alpha}\left(\X{\alpha}_t\right)\right] \geq \pi^{\alpha}(f^{\alpha})\eta^{\alpha}(x) - C_0  \e{-\gamma t}.
 \end{equation*}
 The conclusion follows immediatly by integration.
\end{proof}

 We are now in a position to prove Theorem~\ref{thm:main-inf}. Recall that $\lambda^*=\sup_{\alpha\in\Ac_M} {\lambda^{\alpha}}$ is finite since $\lambda^\alpha$ is bounded from above by $\dmax_1$.
 Denote by $\Ac_M^*$ the collection of $\alpha\in\Ac_M$ such that $\lambda^\alpha=\lambda^*$.

 Let us assume for the moment that $\Ac_M^*$ is non-empty. First, in view of Proposition~\ref{prop:main}, it holds for all
 $\beta<\lambda^*$ and $\alpha\in\Ac_M^*$,
 \begin{equation*}
   \left|\left(\lambda^* - \beta\right) J_{\beta}(x,\alpha) - \pi^{\alpha}(f^{\alpha})\eta^{\alpha}(x)\right| \leq C \left(\lambda^* - \beta\right).   
 \end{equation*}  
 It follows that
 \begin{equation*}
   \left(\lambda^* - \beta\right) v_\beta(x) \leq \pi^{\alpha}(f^{\alpha})\eta^{\alpha}(x) + C \left(\lambda^* - \beta\right).   
 \end{equation*}    
 Hence, we obtain
 \begin{equation}\label{eq:upperbound1.5}
   \limsup_{\beta\uparrow\lambda^*} {\left(\lambda^* - \beta\right) v_\beta(x)} \leq \inf_{\alpha\in\Ac_M^*} {\left\{\pi^{\alpha}(f^{\alpha})\eta^\alpha(x)\right\}}.
 \end{equation}
  
 Second, we denote by $\alpha_\beta\in\Ac_M$ an optimal Markov control for the $\beta$--discounted minimization problem as in Theorem~\ref{thm:alpha-opt-Markov-sup}. 
 One clearly has
 \begin{equation*}
  \left(\lambda^* - \beta\right) v_\beta(x) \geq \left(\lambda^{\alpha_\beta} - \beta\right) v_\beta(x). 
 \end{equation*}
   To conclude the proof of Theorem~\ref{thm:main-inf}, it is hence sufficient to show that
   $\Ac^*_M\not=\emptyset$ and that there exists $\alpha^*\in\Ac^*_M$ such that
 \begin{equation}
  \label{eq:main-inf}
  \liminf_{\beta\uparrow\lambda^*} {\left(\lambda^{\alpha_\beta} - \beta\right) v_\beta(x)} = \pi^{\alpha^*}(f^{\alpha^*})\eta^{\alpha^*}(x).
 \end{equation}
 Note that the existence of $\alpha^*$ such that $\lambda^{\alpha^*}=\lambda^*$ will validate the proof
  of~\eqref{eq:upperbound1.5} above.
 
 Let us consider an increasing sequence $(\beta_n)_{n\in\N}$ converging to $\lambda^*$ such that for all $x\in\N$,
 \begin{equation*}
  \lim_{n\to+\infty} {\left(\lambda^{\alpha_{\beta_n}} - \beta_n\right) v_{\beta_n}(x)} = \liminf_{\beta\uparrow\lambda^*} {\left(\lambda^{\alpha_\beta} - \beta\right) v_\beta(x)}.
 \end{equation*}
 Since $A$ is compact, we can extract a subsequence (still denoted $(\beta_n)_{n\in\N}$) such that $\alpha_{\beta_n}$ converges pointwise to
 $\alpha^*\in\Ac_M$. The rest of the proof consists in showing that $\alpha^*\in\Ac_M^*$ and~\eqref{eq:main-inf} is satisfied. 
 To clarify the presentation, we split the proof of this result in three parts.
  
\medskip\noindent\textsl{Step 1.} As a first step, we show that $\alpha^*\in\Ac_M^*$ and the QSD $\pi^{\alpha_{\beta_n}}$ (resp. the extinction rate $\lambda^{\alpha_{\beta_n}}$) converges to $\pi^{\alpha^*}$ (resp. $\lambda^{\alpha^*}$). In view of Lemma~\ref{lem:tight} below, $(\pi^{\alpha_{\beta_n}})_{n\in\N}$ is a tight sequence of probability measures on $\N$. Hence, we can extract a subsequence converging pointwise to some probability measure $\pi^*$ on $\N$. Denote by $\Kc^{\alpha}$ the adjoint operator of $\Lc^{\alpha}$, \textit{i.e.}, for all $u\in\R^{\ZZ_+}$ and $x\in\ZZ_+$,
 \begin{equation*}
  \Kc^{\alpha} {u} (x) = \sum_{y=0}^{x-1} {b^{\alpha}(y) p_{y,x-y}^{\alpha}u(y)}  + d^{\alpha}(x+1) u(x+1) - \left( b^{\alpha}(x) + d^{\alpha}(x) \right) u(x).
 \end{equation*}
 It follows from Proposition~4 of~\cite{meleard-villemonais-12} that
 \begin{equation}\label{eq:pi}
  \lambda^{\alpha_{\beta_n}} \pi^{\alpha_{\beta_n}} + \Kc^{\alpha_{\beta_n}} {\pi^{\alpha_{\beta_n}}} = 0.
 \end{equation}
 In addition, Proposition~\ref{prop:main} ensures that $\lambda^{\alpha_{\beta_n}}>\beta_n$ and thus $\lambda^{\alpha_{\beta_n}}$ converges to $\lambda^*$.
 Hence, by sending $n$ to infinity, we obtain 
 \begin{equation*}
  \lambda^{*} \pi^{*} + \Kc^{\alpha^*} {\pi^{*}} = 0.
 \end{equation*}
 By Proposition~4 of~\cite{meleard-villemonais-12}, we deduce that $\pi^*$ is a QSD of $\X{\alpha^*}$ with extinction rate $\lambda^*$. The conclusion follows by uniqueness of the QSD.

\medskip\noindent \textsl{Step 2.}
 As an intermediate step, we show that $\liminf_{\beta\uparrow\lambda^*} {(\lambda^{\alpha_\beta}-\beta) v_\beta}$ is collinear with $\eta^{\alpha^*}$. Denote $\phi_n := (\lambda^{\alpha_{\beta_n}}-\beta_n)v_{\beta_n}$ and $\phi:=\lim_{n\to+\infty} {\phi_{n}}$. It follows from Theorems~\ref{thm:HJB} and \ref{thm:alpha-opt-Markov-sup} that
 \begin{equation*}
  \beta_n \phi_n + \Lc^{\alpha_{\beta_n}} {\phi_n} + \left(\lambda^{\alpha_{\beta_n}}-\beta_n\right)f^{\alpha_{\beta_n}} = 0.
 \end{equation*}
 By sending $n$ to infinity, we want to derive that
 \begin{equation}\label{eq:hjblim}
  \lambda^* \phi + \Lc^{\alpha^*} {\phi} = 0.
 \end{equation}
 In view of the continuity assumption of Hypothesis~\ref{hyp:main}, it suffices to show that for all $x\in\N$,
 \begin{equation}\label{eq:limserie}
  \lim_{n\to+\infty} {\sum_{k\geq 1} {p_{x,k}^{\alpha_{\beta_n}}\phi_{n}(x+k)}} = \sum_{k\geq 1} {p_{x,k}^{\alpha^*}\phi(x+k)}.
 \end{equation}
 Let us show first that $\phi_{n}$ is bounded uniformly w.r.t. $n$. Applying Propositions~\ref{prop:main} and~\ref{thm:QSD}, we obtain
 \begin{align*}
  \phi_{n}(x+k) 
    & \leq \pi^{\alpha_{\beta_n}}\left(f^{\alpha_{\beta_n}}\right)\eta^{\alpha_{\beta_n}}(x+k) + C\left(\lambda^{\alpha_{\beta_n}} -\beta_n\right) \\
    & \leq \left(C' + 1\right) \lVert f\rVert_{\infty} + C\left(\lambda^{*} -\beta_n\right).
 \end{align*}
 This implies that $\phi_{n}$ is uniformly bounded by $C_0 := (C' + 1) \lVert f\rVert_{\infty} + C\left(\lambda^{*} -\beta_0\right)$.
 In addition, it holds 
 \begin{multline*}
  \left| \sum_{k\geq 1} {p_{x,k}^{\alpha^*}\phi(x+k)} - \sum_{k\geq 1} {p_{x,k}^{\alpha_{\beta_n}}\phi_{n}(x+k)}\right| 
  \\ \leq C_0 \sum_{k\geq 1} {\left| p_{x,k}^{\alpha^*} - p_{x,k}^{\alpha_{\beta_n}}\right|}
    + \sum_{k\geq 1} {p_{x,k}^{\alpha^*}\left|\phi(x+k) - \phi_{n}(x+k)\right|}.
 \end{multline*}
 Using Scheff\'e's lemma and the dominated convergence theorem, we deduce that the relation~\eqref{eq:limserie} is satisfied, and
 hence~\eqref{eq:hjblim}. In other words, $\phi$ is a bounded eigenfunction of $\Lc^{\alpha^*}$ for the eigenvalue $-\lambda^* = -\lambda^{\alpha^*}$. 
 According to Corollary 2.4 of~\cite{champagnat-villemonais-15}, it yields that $\phi$ is collinear with $\eta^{\alpha^*}$.

\medskip\noindent\textsl{Step 3.} To conclude, it remains to identify the collinearity coefficient between $\phi$ and $\eta^{\alpha^*}$. It follows by integration of the HJB equation~\eqref{eq:HJB} that
 \begin{equation*}
  \beta \pi^{\alpha_\beta}\left(v_\beta\right) + \pi^{\alpha_\beta}\left(\Lc^{\alpha_\beta} {v_\beta}\right) + \pi^{\alpha_\beta}\left(f^{\alpha_\beta}\right) = 0.
 \end{equation*}
 In particular, using the relation~\eqref{eq:pi}, we obtain
 \begin{equation*}
  - \pi^{\alpha_{\beta_n}}\left(\phi_n\right) + \pi^{\alpha_{\beta_n}}\left(f^{\alpha_{\beta_n}}\right) = 0.
 \end{equation*}
 Now we can use Scheff\'e's Lemma and the dominated convergence theorem as above to derive
 \begin{equation*}
  - \pi^{\alpha^*}\left(\phi\right) + \pi^{\alpha^*}\left(f^{\alpha^*}\right) = 0.
 \end{equation*}
 Since $\pi^{\alpha^*}(\eta^{\alpha^*})=1$ as stated in Proposition~\ref{thm:QSD}, we conclude that $\phi=\pi^{\alpha^*}\left(f^{\alpha^*}\right)\eta^{\alpha^*}$, and the proof of Theorem~\ref{thm:main-inf} is completed.
 
 \begin{lemma}\label{lem:tight}
  The family of probability measures $(\pi^\alpha)_{\alpha\in\Ac_M}$ is tight.
 \end{lemma}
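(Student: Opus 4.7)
The plan is to apply a Foster--Lyapunov argument with a \emph{bounded} Lyapunov function, in order to transfer the super-linear lower bound $d_n(a)\geq \dmin n^{1+\epsilon}$ from Hypothesis~\ref{hyp:qsd}(i) into a uniform polynomial moment estimate on the QSDs; tightness on $\N$ then follows by Markov's inequality. The key tool is the eigenmeasure identity $\Kc^\alpha \pi^\alpha=-\lambda^\alpha\pi^\alpha$ supplied by Proposition~4 of~\cite{meleard-villemonais-12}, which, after testing against any bounded $g:\ZZ_+\to\R$ with $g(0)=0$, is equivalent to the functional identity
$$
\pi^\alpha(\Lc^\alpha g)=-\lambda^\alpha\,\pi^\alpha(g);
$$
this identity is already invoked in Step~1 of Section~\ref{sec:proofthmmain}.

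Concretely, I would fix $\delta\in(0,\epsilon)$ and define $V:\ZZ_+\to[0,1]$ by $V(0):=0$ and $V(x):=1-x^{-\delta}$ for $x\geq 1$, truncated as $V_N(x):=V(x\wedge N)$, so that $V_N$ is bounded by $1$, vanishes at $0$, and is constant on $\{N,N+1,\dots\}$. Using convexity of $y\mapsto y^{-\delta}$ together with Hypothesis~\ref{hyp:population}(ii),(iv), the birth part of $\Lc^\alpha V_N(x)$ is bounded above by $\delta\bmax M\,x^{-\delta}$; the mean-value theorem and Hypothesis~\ref{hyp:qsd}(i) show that the death part is at most $-\delta\dmin\,x^{\epsilon-\delta}$ for $2\leq x\leq N$. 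Hence there exists a threshold $x_1$ depending only on $\bmax,M,\dmin,\epsilon,\delta$ such that, uniformly in $\alpha\in\Ac_M$ and for all $N\geq x_1$,
$$
\Lc^\alpha V_N(x)\leq -\frac{\delta\dmin}{2}\,x^{\epsilon-\delta}\quad\text{for } x_1\leq x\leq N,
$$
while $\Lc^\alpha V_N$ is bounded above by some uniform constant $C_0$ on $\{1,\dots,x_1-1\}$ and vanishes on $\{x>N\}$.

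Integrating against $\pi^\alpha$, using $V_N\leq 1$, and invoking the uniform upper bound $\lambda^\alpha\leq\dmax_1$ (established at the end of the proof of Proposition~\ref{thm:QSD}), one obtains
$$
\frac{\delta\dmin}{2}\sum_{x_1\leq x\leq N}\pi^\alpha(x)\,x^{\epsilon-\delta}\leq C_0+\lambda^\alpha\pi^\alpha(V_N)\leq C_0+\dmax_1.
$$
Letting $N\to\infty$ by monotone convergence gives $\sup_{\alpha\in\Ac_M}\sum_{x\geq x_1}\pi^\alpha(x)\,x^{\epsilon-\delta}<\infty$, and Markov's inequality yields $\sup_{\alpha\in\Ac_M}\pi^\alpha(\{x\geq N\})=O(N^{-(\epsilon-\delta)})$ as $N\to\infty$, which proves tightness.

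The delicate point is the choice of Lyapunov function: it must be bounded (so that the QSD identity applies without integrability trouble) and vanish at the absorbing state, yet its generator must be bounded above by a quantity diverging to $-\infty$ at infinity. The family $V(x)=1-x^{-\delta}$ works precisely when $\delta<\epsilon$: the gap $\epsilon-\delta>0$ afforded by Hypothesis~\ref{hyp:qsd}(i) is exactly what makes the death contribution dominate the birth contribution at infinity, and truncation at level $N$ is used only to sidestep any subtlety in writing $\pi^\alpha(\Lc^\alpha V)=-\lambda^\alpha\pi^\alpha(V)$ for the unbounded-support $V$.
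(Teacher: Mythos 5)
Your argument is correct in substance but takes a genuinely different route from the paper. The paper's proof is a two-line consequence of machinery it has already built: by Theorem~\ref{thm:QSD-short-version},
$$
\pi^\alpha(\{x\geq K\})\ \leq\ C\e{-\gamma t}+\frac{\P_1\left(\X{\alpha}_t\geq K\right)}{\P_1\left(t<\tau^\alpha\right)}\ \leq\ C\e{-\gamma t}+\frac{\e{(M\bmax+\dmax_1)t}}{K},
$$
using the moment bound~\eqref{eq:moment} with Markov's inequality and the elementary bound $\P_1(t<\tau^\alpha)\geq \e{-\dmax_1 t}$; since $C$ and $\gamma$ are uniform in $\alpha$, choosing $t$ large and then $K$ large gives tightness. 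Your Foster--Lyapunov argument is longer but buys strictly more: a uniform polynomial moment bound $\sup_{\alpha\in\Ac_M}\pi^\alpha\bigl(x^{\epsilon-\delta}\bigr)<\infty$, of which tightness is a corollary. The elementary estimates on $\Lc^\alpha V_N$ (convexity for the birth part, mean value theorem plus Hypothesis~\ref{hyp:qsd}~(i) for the death part) are correct, and the uniform bound $\lambda^\alpha\leq\dmax_1$ is indeed available from the proof of Proposition~\ref{thm:QSD}.

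The one step you should justify more carefully is the identity $\pi^\alpha(\Lc^\alpha V_N)=-\lambda^\alpha\pi^\alpha(V_N)$. Deriving it by testing the pointwise adjoint identity~\eqref{eq:pi} against $V_N$ and applying Fubini requires $\pi^\alpha(b^\alpha+d^\alpha)<\infty$: otherwise the sums $\sum_x V_N(x)\,d^\alpha(x+1)\pi^\alpha(x+1)$ and $\sum_x V_N(x)\,d^\alpha(x)\pi^\alpha(x)$ may both be infinite (the death rates are only bounded above by the arbitrary sequence $(\dmax_n)$), and the rearrangement is of the form $\infty-\infty$. This integrability is precisely of the same nature as the moment bound you are trying to prove, so it cannot be assumed. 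The fix is available and uses your truncation more heavily than you acknowledge: $\Lc^\alpha V_N$ is a \emph{bounded} function vanishing at $0$, so one can instead apply the Dynkin formula to $V_N$ --- legitimate because $\E_y\int_0^t d^\alpha(\X{\alpha}_s)\,ds<\infty$, the expected number of deaths before $t$ being at most $y$ plus the expected total offspring, which is finite by the argument of Proposition~\ref{prop:welldefined} --- and then integrate against $\pi^\alpha$ and use the QSD property $\E_{\pi^\alpha}[g(\X{\alpha}_s)]=\e{-\lambda^\alpha s}\pi^\alpha(g)$ for the two bounded functions $g=V_N$ and $g=\Lc^\alpha V_N$. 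With that repair the proof goes through.
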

 
 \begin{proof}
  It follows from Theorem~\ref{thm:QSD-short-version} that for any $K\geq 1$,
  \begin{equation*}
   \sum_{x\geq K} \pi^{\alpha}(x) 
     \leq  C\e{-\gamma t} + \frac{\P_1\left(X^\alpha_t\geq K\right)}{\P_1\left(t <\tau^\alpha\right)}.
  \end{equation*}
  On the one hand, the Markov inequality and~\eqref{eq:moment} yield
  \begin{equation*}
   \P_1\left(X^\alpha_t\geq K\right) \leq \frac{\e{M\bar{b}t}}{K}.
  \end{equation*}
  On the other hand, as already mentioned, $\P_1(t <\tau^\alpha) \geq \e{-\dmax_1 t}$.
  We deduce that, for all $t\geq 0$,
  \begin{equation*}
   \sum_{x\geq K} \pi^{\alpha}(x)  \leq  C\e{-\gamma t} + \frac{\e{(M\bar{b}+\bar{d}_1)t}}{K}.
  \end{equation*}
  This ends the proof.
 \end{proof}

 \subsection{Proof of Theorem~\ref{thm:main-sup}}
 
 The proof of Theorem~\ref{thm:main-sup} follows from similar arguments as those developed in Section~\ref{sec:proofthmmain}. In
 particular, it relies on the HJB equation and the existence of an optimal Markov control stated in Theorem~\ref{thm:alpha-opt-Markov}. To apply
 this result, we need to show that $w_\beta$ is bounded for $\beta<\lambda_*=\inf_{\alpha\in\Ac_M} {\lambda^\alpha}$. This is a delicate issue and we postpone its proof to
 Lemma~\ref{lem:bounded} below.
  
  Similar to the previous section, we start by assuming that there exists at least one control $\alpha\in\Ac_M$ such that $\lambda^\alpha=\lambda_*$. Using Proposition~\ref{prop:main}, one easily checks that
  \begin{equation*}
   \liminf_{\beta\uparrow\lambda_*} {\left(\lambda_* - \beta\right) w_\beta(x)} \geq \sup_{\substack{\alpha\in\Ac_M \\ \lambda^\alpha=\lambda_*}} {\left\{\pi^{\alpha}(f^{\alpha})\eta^\alpha(x)\right\}}.
  \end{equation*}
  
  To conclude the proof, it is hence enough to show that
  there exists $\alpha_*\in\Ac_M$ such that $\lambda^{\alpha_*}=\lambda_*$ and
  \begin{equation}
   \label{eq:main-sup}
   \limsup_{\beta\uparrow\lambda_*} {\left(\lambda_* - \beta\right) w_\beta(x)} = \pi^{\alpha_*}(f^{\alpha_*})\eta^{\alpha_*}(x).
  \end{equation}
  For the sake of clarity, we split the proof in three steps. 
  
  \medskip\noindent\textsl{First Step.} Let us show first that there exists $\alpha\in\Ac_M$ such that $\lambda^{\alpha}=\lambda_*$. To achieve this, we want to extend the arguments of Step 1 in the proof of Theorem~\ref{thm:main-inf}. Denote by $\alpha_\beta$ an optimal Markov control for the $\beta$--discounted maximization problem as in Theorem~\ref{thm:alpha-opt-Markov}. In order to repeat Step 1, the only issue is to prove that --- under appropriate assumptions on $f$ --- $\lambda^{\alpha_\beta}$ converges to $\lambda_*$ as $\beta$ goes to $\lambda_*$. Notice first that it follows from Propositions~\ref{prop:main} and \ref{thm:QSD} that for all $\beta<\lambda_*$,
  \begin{equation}
   \label{eq: last-proof}
   (\lambda^{\alpha_\beta} - \beta) w_\beta(x) \leq (C'+1)\|f\|_{\infty} + C(\lambda^*-\beta).
  \end{equation}
  Hence, we can conclude by showing that, for a specific function $f$, $w_\beta$ tends to $+\infty$ as $\beta$ goes to $\lambda_*$. Here we take $f(x,a)=1$ for all $x\in\N$ and $a\in A$. In view of Proposition~\ref{prop:main}, we have for all $\beta<\lambda_*$,
  \begin{equation*}
   J_\beta\left(x,\alpha\right) 
      \geq \frac{1}{\lambda^{\alpha}-\beta}  \inf_{\alpha\in\Ac_M} {\left\{\eta^{\alpha}(x)\right\}} - C,
  \end{equation*}
  where $\inf_{\alpha\in\Ac_M} {\left\{\eta^{\alpha}(x)\right\}}>0$ by Proposition~\ref{prop:minoration}.
  By considering a family of $\alpha\in\Ac_M$ such that $\lambda^{\alpha}$ converges to $\lambda_*$, we deduce that 
  \begin{equation*}
   w_\beta\left(x\right) \geq \frac{1}{\lambda_*-\beta}  \inf_{\alpha\in\Ac_M} {\left\{\eta^{\alpha}(x)\right\}} - C,
  \end{equation*}
  Hence, the conclusion follows immediately by sending $\beta$ to $\lambda_*$.
  
  \medskip\noindent\textsl{Second Step.} Let us show next that~\eqref{eq:main-sup} holds true under the following assumption:
  \begin{equation*}
   \sup_{\substack{\alpha\in\Ac_M \\ \lambda^\alpha = \lambda_*}}{\left\{\pi^{\alpha}(f^{\alpha})\right\}} > 0.
  \end{equation*}
  Note that this condition is satisfied under Hypothesis~\ref{hyp:main-inf}. The idea is to extend the arguments of the previous section to prove that 
  \begin{equation*}
   \limsup_{\beta\uparrow\lambda_*} {\left(\lambda^{\alpha_\beta} - \beta\right) w_\beta(x)} = \pi^{\alpha_*}(f^{\alpha_*})\eta^{\alpha_*}(x),
  \end{equation*}
  where $\alpha_{\beta}$ is an optimal Markov control for the $\beta$--discounted maximization problem as in Theorem~\ref{thm:alpha-opt-Markov}. In order to repeat Step 1 of Section~\ref{sec:proofthmmain}, the only issue is to show that $\lambda^{\alpha_{\beta}}$ converges to $\lambda_*$ as $\beta$ goes to $\lambda_*$. To achieve this, we can extend the arguments above. Indeed, it follows from Proposition~\ref{prop:main} that
  \begin{equation*}
   w_\beta\left(x\right) \geq \frac{1}{\lambda_* -\beta} \sup_{\substack{\alpha\in\Ac_M \\ \lambda^\alpha = \lambda_*}}{\left\{\pi^{\alpha}(f^{\alpha})\eta^{\alpha}(x)\right\}} - C.
  \end{equation*}
  We deduce that $w_\beta$ tends to $+\infty$ as $\beta$ goes to $\lambda_*$. Together with~\eqref{eq: last-proof}, this imposes that $\lambda^{\alpha_\beta}$ tends to $\lambda_*$. In order to repeat Step 2 of Section~\ref{sec:proofthmmain}, we have to ensure that 
  \begin{equation*}
   \limsup_{\beta\uparrow\lambda_*} {\left(\lambda^{\alpha_\beta} - \beta\right) w_\beta(x)} < +\infty.
  \end{equation*}
  This is a straightforward consequence of~\eqref{eq: last-proof}. The rest of the proof follows easily by repeating the arguments of Section 6.1.
  
  \medskip\noindent\textsl{Third Step.} Let us show finally that~\eqref{eq:main-sup} holds true under the following assumption:
  \begin{equation*}
   \sup_{\substack{\alpha\in\Ac_M \\ \lambda^\alpha = \lambda_*}}{\left\{\pi^{\alpha}(f^{\alpha})\right\}} = 0.
  \end{equation*}
  We argue by considering two cases. If $\lambda^{\alpha_\beta}$ converge to $\lambda_*$, we can once again repeat the arguments of the previous section to reach the conclusion. If $\lambda^{\alpha_\beta}$ does not converges to $\lambda_*$, then it follows from~\eqref{eq: last-proof} and the monotonicity of $\beta\to w_\beta(x)$  that for all $\beta<\lambda_*$,
  \begin{equation*}  
   w_{\beta}(x)\leq \lim_{\beta\uparrow\lambda_*} {w_\beta} (x) < +\infty.
  \end{equation*} 
  Hence, \eqref{eq:main-sup} holds true once again.

\begin{lemma}\label{lem:bounded}
 For every $\beta<\lambda_*$, the value function $w_\beta$ is bounded.
\end{lemma}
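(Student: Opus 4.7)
The plan is to exhibit a bounded nonnegative supersolution of the HJB inequality associated with the maximization problem, and then conclude by a verification argument (It\^o's formula) that applies uniformly to all admissible (not merely Markovian) controls. Specifically, I aim to construct $g:\ZZ_+\to\R_+$ bounded with $g(0)=0$, satisfying
\[
\beta g(x) + \sup_{a\in A}\{\Lc^a g(x)+f(x,a)\}\leq 0,\quad\forall\,x\in\N.
\]
Given such $g$, applying It\^o's formula to $e^{\beta t}g(X^{x,\alpha}_t)$ for any $\alpha\in\Ac$ (mimicking the computation in the proof of Theorem~\ref{thm:HJB}), together with the supersolution inequality and $g\geq 0$, would yield $\E[\int_0^t e^{\beta s}f(X^{x,\alpha}_s,\alpha_s)\,ds]\leq g(x)$; monotone convergence as $t\to+\infty$ then gives $J_\beta(x,\alpha)\leq g(x)$, hence $w_\beta\leq g$ is bounded. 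A key feature of this argument is that the control enters only through the instantaneous generator, so it treats Markov and non-Markov controls on equal footing.

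For the construction of $g$, I would re-use the Lyapunov function $\psi(x)=\sum_{y=1}^x y^{-1-\epsilon/2}$ of Lemma~\ref{lem:downfrominf} and fix $\lambda\in(\beta,\lambda_*)$. A first observation is that the estimates in the proof of Lemma~\ref{lem:downfrominf} rely only on the pointwise bounds from Hypotheses~\ref{hyp:population} and~\ref{hyp:qsd}, which are uniform in $a\in A$; consequently the inequality
\[
\Lc^a\psi(x)\leq -\lambda\psi(x),\qquad\forall\,x\geq x_\lambda,\ a\in A,
\]
actually holds for every $a\in A$, not merely for Markov controls. A second observation is that on the finite set $\{1,\dots,x_\lambda-1\}$ the rates $b_n(a)\leq\bmax n$ and $d_n(a)\leq\dmax_n$ are uniformly bounded in $a$, so $\sup_{a\in A,\,x\leq x_\lambda}|\Lc^a\psi(x)|<\infty$. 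I would then look for $g$ of the form $g=C_1\psi+C_2\phi$, where $\phi$ is a bounded nonnegative function supported on $\{1,\dots,x_\lambda\}$ designed so that $C_2\Lc^a\phi$ delivers a strongly negative drift on that set, and tune the constants by choosing first $C_1$ large and then $C_2$ large.

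The hard part will be matching the two regimes across the boundary $x=x_\lambda$: the bump $\phi$ inevitably creates positive contributions to $\Lc^a g$ from birth transitions that cross this level, contributions bounded uniformly in $a$ thanks to $b_n(a)\leq\bmax n$ and $\sum_k k p_{n,k}(a)\leq M$. The crucial point is that on $\{x\geq x_\lambda\}$ the leading Lyapunov term $C_1(\beta-\lambda)\psi(x)$ is negative and bounded away from zero (since $\psi(x)\geq\psi(x_\lambda)>0$ and $\beta-\lambda<0$), so taking $C_1$ large enough absorbs both $\lVert f\rVert_\infty$ and the boundary contributions produced by $\phi$; the constant $C_2$ is then chosen large to dominate $\beta g$ and the bounded term $C_1\Lc^a\psi$ on the finite set $\{1,\dots,x_\lambda-1\}$. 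The hypothesis $\beta<\lambda_*$ is precisely what provides the positive spectral gap $\lambda-\beta>0$ making this construction possible.
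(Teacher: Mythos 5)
Your verification half is sound: given a bounded $g\geq 0$ with $g(0)=0$ and $\beta g(x)+\sup_{a\in A}\{\Lc^a g(x)+f(x,a)\}\leq 0$ on $\N$, the computation from the proof of Theorem~\ref{thm:HJB} applied to $e^{\beta t}g(X^{x,\alpha}_t)$ does give $J_\beta(x,\alpha)\leq g(x)$ uniformly over \emph{all} admissible controls, which is exactly the delicate point here. Your observation that the inequality $\Lc^a\psi(x)\leq-\lambda\psi(x)$ for $x\geq x_\lambda$ holds for every $a\in A$ is also correct. The gap is in the construction of the supersolution on the low states, and it is not a matter of tuning constants: it is the crux of the lemma. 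On $\{1,\dots,x_\lambda-1\}$ you need $\sup_{a\in A}\{\beta\phi(x)+\Lc^a\phi(x)\}<0$ strictly (the terms $\beta C_2\phi$ and $C_2\Lc^a\phi$ both scale with $C_2$, so "taking $C_2$ large" only helps if their sum is already strictly negative). But the death term $d_x(a)(\phi(x-1)-\phi(x))$ is negative only if $\phi$ increases at $x$, while the birth term $b_x(a)\sum_k(\phi(x+k)-\phi(x))p_{x,k}(a)$ is nonpositive only if $\phi$ decreases above $x$; since the hypotheses allow $b_x(a)$ up to $\bmax x$ and $d_x(a)$ as small as $\dmin x^{1+\epsilon}$ on low states, the process may be strongly supercritical there and no monotonicity trick works. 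A function $\phi$ with the required uniform-in-$a$ strict negative drift at rate $\beta$ up to $\lambda_*$ is essentially a principal eigenfunction for the controlled sub-Markovian generator, i.e., its existence is equivalent to the quantitative spectral statement you are trying to prove; you cannot write it down from Hypotheses~\ref{hyp:population} and~\ref{hyp:qsd}. There is also a circularity in the constants even granting $\phi$: at $x\in\{x_\lambda,x_\lambda+1\}$ the bump contributes a positive term of order $C_2$ (e.g.\ $d_{x_\lambda+1}(a)\phi(x_\lambda)$), which must be absorbed by $C_1(\lambda-\beta)\psi(x_\lambda)$, so the boundary forces $C_1\gtrsim C_2$ while the finite set forces $C_2\gtrsim C_1$; whether both can hold depends on ratios of constants you do not control.

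The paper takes an entirely different route that avoids constructing anything: it sets $\beta_*:=\inf\{\beta:\sup_x w_\beta(x)=+\infty\}$ and derives a contradiction from $\beta_*<\lambda_*$. First, $w_{\beta_*}$ is bounded because for $\beta<\beta_*$ the optimal control is Markov (Theorem~\ref{thm:alpha-opt-Markov}) and Proposition~\ref{prop:main} gives $w_\beta\leq(C'+1)\|f\|_\infty/(\lambda_*-\beta)+C$, uniformly as $\beta\uparrow\beta_*$. Then a bootstrap based on the pseudo-Markov property (Lemma~\ref{lem:cond}) shows by induction that $\int_0^\infty s^ne^{\beta_* s}\P_x(s<\tau^\alpha)\,ds\leq C_0^{n+1}n!$ for all admissible $\alpha$, whence $w_{\beta_*+\varepsilon}$ is bounded for $\varepsilon<1/C_0$, contradicting the definition of $\beta_*$. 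If you want to pursue your supersolution idea, you would need to replace the explicit bump by this kind of implicit argument, at which point the supersolution is no longer doing the work.
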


\begin{proof}
  If we knew that the optimal control in the value function is Markov, this would directly follow from
  Proposition~\ref{prop:main}. The difficulty is to prove that non-Markov controls cannot give unbounded costs.
Denote
\begin{equation*}
 \beta_* := \inf \left\{ \beta;\ \sup_{x\in\N} w_\beta(x) = +\infty\right\}.
\end{equation*}
 We assume that $\beta_*<\lambda_*$ in order to reach a contradiction. 
 
 \medskip\noindent \textsl{First step.} We start by showing that $w_{\beta_*}$ is bounded. 
 For all $\beta<\beta_*$, it follows from Theorem~\ref{thm:alpha-opt-Markov} that the optimal control is Markov, and hence from
 Propositions~\ref{thm:QSD} and \ref{prop:main} that
 \begin{equation*}
  w_\beta(x) = J_\beta(x,\alpha_\beta) 
    \leq \frac{\pi^{\alpha_\beta}(f^{\alpha_\beta})}{\lambda^{\alpha_\beta}-\beta}\eta^{\alpha_\beta}(x) + C
    \leq \frac{\left(C'+1\right)\|f\|_{\infty}}{\lambda_*-\beta} + C.
 \end{equation*}
 We deduce that
 \begin{equation*}
  w_{\beta_*}(x) = \lim_{\beta\uparrow \beta_*} w_\beta(x) \leq \frac{\left(C'+1\right)\|f\|_{\infty}}{\lambda_*-\beta_*} + C < +\infty.
 \end{equation*}
 Note that the first identity above relies on a permutation of supremum since $\beta\mapsto w_{\beta}(x)$ is nondecreasing for all $x\in\N$.
 
 \medskip\noindent \textsl{Second step.} To reach a contradiction, it remains to prove that there exists $\varepsilon>0$ such that 
 \begin{equation*}
  \sup_{x\in\N} w_{\beta_*+\varepsilon}(x) < +\infty.
 \end{equation*}
 Without loss of generality we take $f=\mathds{1}_{\N}$ and we claim that for all $\alpha\in\Ac$ and $x\in\N$,
 \begin{equation}\label{eq:induction}
  \int_0^{+\infty} {s^n e^{\beta_* s} \P_x\left(s<\tau^{\alpha}\right)  ds} \leq   C_0^{n+1} n!,
 \end{equation}
 where 
 \begin{equation*}
  C_0:= \sup_{x\in\N} \sup_{\alpha\in\Ac} \left\{\int_0^{+\infty} {e^{\beta_* s} \P_x\left(s<\tau^{\alpha}\right)  ds}\right\}.
 \end{equation*}
 If~\eqref{eq:induction} is proved, the conclusion follows from the fact that, for any $\varepsilon<\frac{1}{C_0}$,
 \begin{equation*}
  w_{\beta_*+\varepsilon}(x) 
     = \sup_{\alpha\in\Ac} \left\{\int_0^{+\infty} {e^{(\beta_*+\varepsilon)s} \P_x\left(s<\tau^{\alpha}\right)  ds}\right\}
     \leq C_0 \sum_{n\geq 0} (C_0\varepsilon)^n = \frac{C_0}{1-\varepsilon C_0}.
 \end{equation*}

 Let us show that the relation~\eqref{eq:induction} holds by induction. The result is obvious for $n=0$. Using successively Fubini-Tonelli and the pseudo-Markov property (Lemma~\ref{lem:cond}), the incrementation step follows by
 \begin{multline*}
  \int_0^{+\infty} {s^{n+1} e^{\beta_* s} \P_x\left(s<\tau^{\alpha}\right)  ds} \\
  \begin{aligned}
    & = \int_0^{+\infty} {\int_t^{+\infty} {s^{n} e^{\beta_* s} \P_x\left(s<\tau^{\alpha}\right)  ds}\, dt} \\
    & = \int_0^{+\infty} { \int_0^{+\infty} {(t+s)^{n} e^{\beta_* (t+s)} \E_{x}\left[\P_{X^{\alpha}_t}\left(s<\tau^{\alpha^t}\right)\right]  ds}\, dt} \\
    & = \sum_{k=0}^{n} \binom{n}{k} \int_0^{+\infty} {t^k e^{\beta_* t} \int_0^{+\infty} {s^{n-k} e^{\beta_* s} \E_{x}\left[\P_{X^{\alpha}_t}\left(s<\tau^{\alpha^t}\right)\right]  ds}\, dt} \\ 
    & = \sum_{k=0}^{n} \binom{n}{k} \int_0^{+\infty} { t^k e^{\beta_* t} \sum_{y\in\N} {\E_{x}\left[ \mathds{1}_{X^{\alpha}_t=y} \int_0^{+\infty} {s^{n-k} e^{\beta_* s} \P_{y}\left(s<\tau^{\alpha^{t}}\right) ds}\right]   dt} }
  \end{aligned}
 \end{multline*}
 It follows that 
 \begin{gather*}
  C_{n+1} \leq  \sum_{k=0}^{n} \binom{n}{k} C_{k} C_{n-k},
  \intertext{where}
  C_k := \sup_{y\in\N} \sup_{\alpha\in\Ac} \int_0^{+\infty} { s^k e^{\beta_* s} \P_y\left(s<\tau^{\alpha}\right)  ds}.
 \end{gather*}
 To conclude, it remains to observe that the sequence $u_n := C_0^{n+1} n!$ is the solution of 
 \begin{equation*}
  u_{n+1} =  \sum_{k=0}^{n} \binom{n}{k} u_{k} u_{n-k}
 \end{equation*}
 satisfying $u_0 = C_0$.  
\end{proof}

\paragraph{Aknowledgement} Julien Claisse gratefully acknowledges financial support from the ERC Advanced Grant 321111 ROFIRM.

\bibliography{biblio-math}{}
\bibliographystyle{alpha}

\end{document}